\providecommand*{\cupdot}{%
	\mathbin{%
		\mathpalette\@cupdot{}%
	}%
}
\newcommand*{\@cupdot}[2]{%
	\ooalign{%
		$\m@th#1\cup$\cr
		\hidewidth$\m@th#1\cdot$\hidewidth
	}%
}
\DeclareMathOperator{\DM}{DM}
\DeclareMathOperator{\BDM}{\mathbf{DM}}
\DeclareMathOperator{\Max}{Max}
\DeclareMathOperator{\Min}{Min}
\newcommand{\implik}{\,\,\Rightarrow}
\def\LULU{\hbox{{$L$}{$U$}}}
\newtheorem{theorem}{Theorem}[section]
\newtheorem{definition}[theorem]{Definition}
\newtheorem{lemma}[theorem]{Lemma}
\newtheorem{proposition}[theorem]{Proposition}
\newtheorem{remark}[theorem]{Remark}
\newtheorem{example}[theorem]{Example}
\newtheorem{corollary}[theorem]{Corollary}
\title{Representability of Kleene posets and Kleene lattices}
\author{Ivan~Chajda, Helmut~L\"anger and Jan~Paseka}
\date{}
\begin{document}
%%cislovani stranek	\linenumbers

\footnotetext{Support of the research of all authors by the Austrian Science Fund (FWF), project I~4579-N, and the Czech Science Foundation (GA\v CR), project 20-09869L, entitled ``The many facets of orthomodularity'', is gratefully acknowledged.}

\maketitle

\begin{abstract}
A Kleene lattice is a distributive lattice equipped with an antitone involution and satisfying the so-called normality condition. These lattices were introduced by J.~A.~Kalman. We extended this concept also for posets with an antitone involution. In our recent paper \cite{CLP}, we showed how to construct such Kleene lattices or Kleene posets from a given distributive lattice or poset and a fixed element of this lattice or poset by using the so-called twist product construction, respectively. We extend this construction of Kleene lattices and Kleene posets by considering a fixed subset instead of a fixed element. 

Moreover, we show that in some cases, this generating poset can be embedded into the resulting Kleene poset. 
We investigate the question when a Kleene poset can be represented by a Kleene poset obtained by the 
mentioned construction. We show that a direct product of representable Kleene posets is again representable and hence a direct product of finite chains is representable. This does not hold in general for subdirect products, but we show some examples where it holds. We present large classes of representable and non-representable Kleene posets. Finally, we investigate two kinds of extensions of a distributive poset $\mathbf A$, namely its Dedekind-MacNeille completion $\BDM(\mathbf A)$ and a completion $G(\mathbf A)$ which coincides with $\BDM(\mathbf A)$ provided $\mathbf A$ is finite. In particular we prove that if $\mathbf A$ is a Kleene poset then its extension $G(\mathbf A)$ is  also a Kleene lattice. If the subset $X$ of principal order ideals of $\mathbf A$ is involution-closed and doubly dense in $G(\mathbf A)$ then it generates $G(\mathbf A)$ and it is isomorphic to $\mathbf A$ itself.
\end{abstract}

{\bf AMS Subject Classification:} 06D30, 06A11, 06B23, 06D10, 03G25

{\bf Keywords:} Kleene lattice, normality condition, Kleene poset, pseudo-Kleene poset, representable Kleene lattice, embedding, twist-product, Dedekind-MacNeille completion

\section{Introduction}

Kleene lattices serve as algebraic semantics for a specific De Morgan logic. The latter is a logic equipped with a negation $'$ satisfying the double negation law $x''=x$. Here the unary operation $'$ is assumed to be antitone with respect to the induced order, but it need not be a complementation. In order to enrich the properties of such a negation, it is natural to ask the so-called {\em normality condition}, i.e.,\ the inequality
\[
x\wedge x'\leq y\vee y'.
\]
Of course, if $'$ is a complementation, then this inequality is satisfied automatically. But often, the negation in a De Morgan logic has this property, and hence such a negation turns out to be close to complementation. Distributive lattices with an antitone involution satisfying the normality condition are called {\em Kleene lattices} and were introduced by J.~A.~Kalman \cite K (under a different name). To emphasize the importance of this concept, let us note that every MV-algebra, i.e.,\ the algebraic semantics of \L ukasiewicz's many-valued logic, is a Kleene lattice. Moreover, MV-algebras are also crucial in the logic of quantum events because every so-called lattice effect algebra is composed of blocks, which are MV-algebras. Due to this, the question how to construct Kleene lattices  is of some interest and importance.

If instead of lattices, only posets are considered, one obtains so-called {\em Kleene posets}. If we also forget distributivity, we get {\em pseudo-Kleene posets}. We will introduce both notions later.

Our previous paper \cite{CLP} showed how to construct a Kleene lattice $\mathbf K$ from a given distributive lattice $\mathbf L=(L,\vee,\wedge)$ employing the so-called twist product and its reduction using a non-empty subset $S$ of $L$. In such a case, we say that $\mathbf K$ is representable. However, a lot of problems mentioned in \cite{CLP}  remain open. Among them, we would like to try to solve the following ones:
\begin{itemize}
\item Determine classes of representable Kleene lattices as well as classes of not representable Kleene lattices.
\item Can these constructions be extended to Kleene posets?
\item Can every poset be embedded into a Kleene poset obtained by such a construction?
\item Is the Dedekind-MacNeille completion of a representable poset a representable Kleene lattice?
\end{itemize}
The present paper aims to get at least partial answers to the mentioned questions. We show that  direct products of chains can be considered as  representable Kleene lattices and study certain ordinal sums of distributive lattices. We prove that if a pseudo-Kleene poset $\mathbf K$ of odd cardinality can be represented by a distributive poset $\mathbf A$ and a non-empty subset $S$ of $A$, then $S$ must be  a singleton, i.e.,\ $S=\{a\}$, and $\mathbf A$ can be embedded into $\mathbf K$ in such a way that $a$ is mapped onto the unique fixed point of\, {}$'$. We prove further results on representable Kleene posets and Kleene lattices.

First, we recall or introduce several concepts on ordered sets (posets). 

Let $(A,\leq)$ be a poset, $b,c\in A$ and $B,C\subseteq A$. We say
\[
B\leq C\text{ if }x\leq y\text{ for all }x\in B\text{ and }y\in C.
\]
Instead of $B\leq\{c\}$, $\{b\}\leq C$ and $\{b\}\leq\{c\}$ we simply write $B\leq c$, $b\leq C$ and $b\leq c$, respectively. Further, we define
\begin{align*}
L(B) & :=\{x\in A\mid x\leq B\}, \\
U(B) & :=\{x\in A\mid B\leq x\}.
\end{align*}
Instead of $L(B\cup C)$, $L(B\cup\{c\})$, $L(\{b\}\cup C)$, $L(\{b,c\})$ and $L\big(U(B)\big)$ we simply write $L(B,C)$, $L(B,c)$, $L(b,C)$, $L(b,c)$ and $LU(B)$, respectively. Analogously we proceed in similar cases.

A subset $I$ of $A$ is said to be a {\em Frink ideal} if $LU(M) \subseteq  I$
for each finite subset $M \subseteq  I$. Similarly, a 
subset $F$ of $A$ is said to be a {\em Frink filter} if $UL(N) \subseteq  F$
for each finite subset $N \subseteq  F$.

An order-preserving map $f$ between posets $\mathbf A$ and
$\mathbf B$ is said to be
\it an \LULU-morphism \rm\ if
\begin{equation}\label{LUmor}
	\hbox{{$ L$}}\big(f(X)\big)=\hbox{{$ L$}}\big(f(\hbox{{$U$}}\hbox{{$L$}}(X)\big)\quad \hbox{and}\quad%
	\hbox{{$U$}}\big(f(Y)\big)=\hbox{{$U$}}\big(f(\hbox{{$L$}}\hbox{{$ U$}}(Y)\big)
\end{equation}
\noindent
for all non-empty finite subsets $X, Y\subseteq A$.

We say that an \LULU-morphism $f$ is {\em an \LULU-embedding}
({\em \LULU-isomorphism})\
if $f$ is {\em order reflecting} (the inverse map to $f$ is an
\LULU-morphism).

An {\em antitone involution} on $\mathbf A$ is a unary operation $'$ on $A$, satisfying
\begin{align*}
& x\leq y\text{ implies }y'\leq x', \\
& x''=x
\end{align*}
($x,y\in A$).

An element $y \in A$ is said to be a {\em complement} of $x \in A$ 
if $L(x, y) =L(A)$ and $U(x, y) = U(A)$. $\mathbf A$ is said to be 
{\em complemented} if each element of $A$ has a complement in $\mathbf A$. 
$\mathbf A$ is said to be {\em Boolean} if it is distributive and complemented.

If $\mathbf A$ has a greatest element $1$  and a smallest element $0$,  
then an 
antitone involution $'$ on $A$ is called an {\em orthocomplementation} if 
$L(x,x')=\{0\}$ and $U(x,x')=\{1\}$. 

\section{Constructions of  Kleene posets}

Let $\mathbf A=(A,\leq)$ be a poset. Then the {\em twist product} of $\mathbf A$ is defined as $(A^2,\sqsubseteq)$ where
\[
(x,y)\sqsubseteq(z,u)\text{ if }x\leq z\text{ and }u\leq y
\]
($(x,y),(z,u)\in A^2$). It is easy to see that the twist product of $\mathbf A$ is a poset again. This construction was successfully applied in the study of the so-called Nelson-type algebras \cite{BC}.

Now we define the central concept of our paper, which is used to represent Kleene lattices and Kleene posets within twist products. Let $S$ be a non-empty subset of $A$. Define
\[
P_S(\mathbf A):=\{(x,y)\in A^2\mid L(x,y)\leq S\leq U(x,y)\}.
\]
Instead of $P_{\{a\}}(\mathbf A)$, we simply write $P_a(\mathbf A)$.

The ordered triple $(A,\leq,{}')$ is called a {\em pseudo-Kleene poset} if $'$ is an antitone involution on $\mathbf A$ and the {\em normality condition}
\begin{equation}\label{equ1}
L(x,x')\leq U(y,y')\text{ for all }x,y\in A
\end{equation}
holds. The {\em poset} $\mathbf A$ is called {\em distributive} if one of the following equivalent LU-identities is satisfied:
\begin{align*}
     L\big(U(x,y),z\big) & \approx LU\big(L(x,z),L(y,z)\big), \\
U\big(L(x,z),L(y,z)\big) & \approx UL\big(U(x,y),z\big), \\
     U\big(L(x,y),z\big) & \approx UL\big(U(x,z),U(y,z)\big), \\
L\big(U(x,z),U(y,z)\big) & \approx LU\big(L(x,y),z\big), \\
 L\big(U(x_1,x_2, \dots,x_n),z\big) & \approx LU\big(L(x_1,z),L(x_2,z), \dots, L(x_n,z)\big),\\
 U\big(L(x_1,x_2, \dots,x_n),z\big) & \approx UL\big(U(x_1,z),U(x_2,z), \dots, U(x_n,z)\big).
\end{align*}
A {\em Kleene poset} is a distributive pseudo-Kleene poset.

\begin{remark}\rm 
	Recall that 
	D. Zhu in {\cite{Zhu}} introduced the notion of a Kleene poset as an 
	 ordered triple $(A,\leq,{}')$ such that $'$ is an antitone involution on $\mathbf A$ and the 
	 {\em Zhu condition}
	\begin{equation}\label{equzhu}
		x\leq x' \text{ and }  y'\leq y\text{ implies } 
		x\leq y \text{ for all }x,y\in A
	\end{equation}
	holds. In fact, his concept is precisely the pseudo-Kleene poset in our sense because he does not assume distributivity of $(A,\leq)$. 
\end{remark}

\begin{lemma}
	Let $\mathbf A=(A,\leq,{}')$ be a poset with an antitone involution $'$. Then 
	the {normality condition} {\rm (\ref{equ1})} is equivalent to  
	the {Zhu condition}  {\rm (\ref{equzhu})}.
\end{lemma}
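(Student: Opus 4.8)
The plan is to prove the equivalence of the normality condition~(\ref{equ1}) and the Zhu condition~(\ref{equzhu}) by a direct two-way argument, exploiting that both conditions are built from the interaction of $'$ with $\leq$ and that the antitone involution is its own inverse. The key elementary observation I would record first is the following pair of equivalences, valid for any antitone involution: $x \leq x'$ holds if and only if $x \in L(x,x')$ if and only if $x = x \wedge x'$ in the lattice sense, but since we only have a poset, I would phrase it order-theoretically as: $x \leq x'$ iff $x \in L(x,x')$, and dually $y' \leq y$ iff $y \in U(y,y')$. Also, applying $'$ to the normality inequality and using antitonicity shows~(\ref{equ1}) is symmetric in the expected way, so nothing is lost there.

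\textbf{From normality to Zhu.} Assume~(\ref{equ1}). Suppose $x \leq x'$ and $y' \leq y$. Then $x \in L(x,x')$ and $y \in U(y,y')$. By~(\ref{equ1}), $L(x,x') \leq U(y,y')$, i.e.\ every element of $L(x,x')$ is below every element of $U(y,y')$; in particular $x \leq y$. This direction is immediate.

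\textbf{From Zhu to normality.} Assume~(\ref{equzhu}). I must show $L(x,x') \leq U(y,y')$ for arbitrary $x,y \in A$. Take any $u \in L(x,x')$ and any $v \in U(y,y')$; the goal is $u \leq v$. The point is that $u \leq x$ and $u \leq x'$, and since $'$ is an antitone involution, $u \leq x$ gives $x' \leq u'$, so combining $u \leq x'$ and $x' \leq u'$ yields $u \leq u'$. Symmetrically, $v \in U(y,y')$ gives $y \leq v$ and $y' \leq v$; from $y' \leq v$ we get $v' \leq y'' = y$, hence $v' \leq v$. Now apply the Zhu condition~(\ref{equzhu}) with the pair $(u,v)$: since $u \leq u'$ and $v' \leq v$, we conclude $u \leq v$, as desired.

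I do not expect a serious obstacle here; the only thing to be careful about is staying within the poset language and not implicitly invoking meets or joins — every step should be phrased via $L(\cdot,\cdot)$, $U(\cdot,\cdot)$ and the two defining properties of an antitone involution. A minor point worth a sentence in the writeup is that $L(x,x')$ and $U(y,y')$ are always non-empty precisely when... actually they need not be, but the inequality $L(x,x') \leq U(y,y')$ is then vacuously clear, so the argument above (which only uses arbitrary chosen elements) covers all cases uniformly. Thus both implications reduce to a short chain of applications of antitonicity and $x'' = x$, and the lemma follows.
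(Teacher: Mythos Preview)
Your proof is correct and essentially identical to the paper's own argument: both directions proceed exactly as you do, observing for the forward implication that $x\in L(x,x')$ and $y\in U(y,y')$, and for the converse that any $u\in L(x,x')$ satisfies $u\leq u'$ (via $u\leq x$ hence $x'\leq u'$) and dually $v'\leq v$, so Zhu applied to $(u,v)$ gives $u\leq v$. The side remark about possible emptiness of $L(x,x')$ or $U(y,y')$ is harmless and correctly handled.
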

\begin{proof}
	Assume first that the {normality condition} holds. Let $x,y\in A, x\leq x',$ and $y'\leq y$. 
	Then $L(x)=L(x,x')\leq U(y,y')=U(y)$. Hence $x\leq y$. 
	
	Now assume that 	the {Zhu condition} holds. Let $x,y\in A$, $u\in L(x,x')$, and 
	$v\in U(y,y')$. Since $u\leq x$ we obtain that $x'\leq u'$. It follows that 
	$u\leq u'$. Similarly, $v'\leq v$. From the {Zhu condition} we conclude $u\leq v$, 
	i.e., $L(x,x')\leq U(y,y')$. 
\end{proof}

Since Kleene algebras are distributive lattices and Zhu does not assume any 
distributivity condition in his definition, we will use the notion of a Kleene poset in our sense. 

For any poset $\mathbf A=(A,\leq)$ and any non-empty subset $S$ of $A$ we define $(x,y)':=(y,x)$ for all $(x,y)\in A^2$ and $\mathbf P_S(\mathbf A):=\big(P_S(\mathbf A),\sqsubseteq,{}')$. Instead of $\mathbf P_{\{a\}}(\mathbf A)$ we simply write $\mathbf P_a(\mathbf A)$.

Let $p_1$ and $p_2$ denote the first and second projection from $P_S(\mathbf A)$ to $A$, respectively.

The following lemma shows how to produce pseudo-Kleene posets from an arbitrarily given poset.

\begin{lemma}
Let $\mathbf A=(A,\leq)$ be a poset and $S$ a non-empty subset of $A$. Then $\mathbf P_S(\mathbf A)$ is a pseudo-Kleene poset and 
\begin{align*}
	L(X) & =\big(L(p_1(X))\times U(p_2(X))\big)\cap  P_S(\mathbf A), \\
	U(X) & =\big(U(p_1(X))\times L(p_2(X))\big)\cap  P_S(\mathbf A)
\end{align*}
for all $X\subseteq P_S(\mathbf A)$.
\end{lemma}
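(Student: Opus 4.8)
The plan is to verify the two parts of the statement separately: first that $\mathbf P_S(\mathbf A)$ satisfies the normality condition (so it is a pseudo-Kleene poset), and then the two formulas describing $L(X)$ and $U(X)$ inside $P_S(\mathbf A)$. For the first part, I would start by checking that $'$ as defined, $(x,y)':=(y,x)$, is indeed an antitone involution on $(P_S(\mathbf A),\sqsubseteq)$: the involution law $((x,y)')'=(x,y)$ is immediate, and antitonicity follows directly from the definition of $\sqsubseteq$, since swapping coordinates reverses both of the defining inequalities. One should also note that $P_S(\mathbf A)$ is closed under $'$, which is clear because the condition $L(x,y)\le S\le U(x,y)$ is symmetric in $x$ and $y$.

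For the normality condition, I would work with the Zhu condition instead, since the preceding lemma shows the two are equivalent for any poset with antitone involution. So suppose $(x,y)\sqsubseteq(x,y)'=(y,x)$ and $(z,u)'=(u,z)\sqsubseteq(z,u)$. Unwinding $\sqsubseteq$, the first gives $x\le y$ (and $x\le y$ again from the second coordinate), and the second gives $u\le z$. I then need $(x,y)\sqsubseteq(z,u)$, i.e.\ $x\le z$ and $u\le y$. Here is where membership in $P_S(\mathbf A)$ enters: from $(x,y)\in P_S(\mathbf A)$ and $x\le y$ we get $x\in L(x,y)\le S$, so $x\le S$; from $(z,u)\in P_S(\mathbf A)$ and $u\le z$ we get $u\in L(z,u)\le S$… wait, more carefully, $L(z,u)\le S\le U(z,u)$ and $z\in U(z,u)$ when $u\le z$, so $S\le z$, and similarly $S\le U(x,y)$ with $y\in U(x,y)$ gives $S\le y$. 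Combining, $x\le S\le z$ and $u\le S\le y$, which is exactly what we wanted. (One must check $S$ is nonempty to make these chains meaningful, which is assumed.)

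For the two closure formulas, I would prove the statement for $L(X)$; the formula for $U(X)$ is completely dual (apply the $L$-formula to $X'$ and use that $'$ is an order-reversing bijection, or just repeat the argument). The inclusion $L(X)\subseteq\big(L(p_1(X))\times U(p_2(X))\big)\cap P_S(\mathbf A)$ is easy: if $(a,b)\sqsubseteq(x,y)$ for all $(x,y)\in X$, then $a\le x$ and $y\le b$ for all such pairs, i.e.\ $a\in L(p_1(X))$ and $b\in U(p_2(X))$, and $(a,b)\in P_S(\mathbf A)$ since it lies in $L(X)\subseteq P_S(\mathbf A)$. The reverse inclusion is the substantive direction: given $(a,b)\in P_S(\mathbf A)$ with $a\le x$ for all $x\in p_1(X)$ and $y\le b$ for all $y\in p_2(X)$, one reads off directly from the definition of $\sqsubseteq$ that $(a,b)\sqsubseteq(x,y)$ for every $(x,y)\in X$, hence $(a,b)\in L(X)$. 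So this direction is in fact routine once the definitions are written out.

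The main obstacle, such as it is, is bookkeeping in the normality argument: one has to extract from the $P_S(\mathbf A)$-membership conditions exactly the right inequalities relating the coordinates to $S$, and this relies essentially on the hypotheses $x\le y$ and $u\le z$ forcing coordinates into $L(\cdot,\cdot)$ or $U(\cdot,\cdot)$. I do not expect the closure formulas to present any real difficulty beyond unwinding the definition of the twist-product order and intersecting with $P_S(\mathbf A)$; the only point requiring a moment's care is that the membership condition $(a,b)\in P_S(\mathbf A)$ must be carried along explicitly, since it is not implied by the coordinatewise bounds alone.
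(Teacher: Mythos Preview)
Your proof is correct. The one genuine difference from the paper is in how you establish normality: the paper verifies the normality condition $L\big((a,b),(a,b)'\big)\le U\big((c,d),(c,d)'\big)$ directly, picking arbitrary $(e,f)$ in the left-hand side and $(g,h)$ in the right-hand side and showing $e\le S\le g$, $h\le S\le f$ from the $P_S$-membership conditions. You instead invoke the preceding lemma and verify the equivalent Zhu condition, which lets you work only with the two given pairs $(x,y)$ and $(z,u)$ and the hypotheses $x\le y$, $u\le z$, rather than with arbitrary lower and upper bounds. Your route is marginally more economical, since it avoids introducing the auxiliary elements $(e,f),(g,h)$; the paper's route is slightly more self-contained, since it does not rely on the Zhu equivalence. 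For the formulas for $L(X)$ and $U(X)$, your argument by two inclusions is the same as the paper's chain of equivalences, just written in a different format.
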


\begin{proof}
Clearly, $\mathbf P_S(\mathbf A)$ is a poset with an antitone involution. Let 
$(a,b),(c,d)\in P_S(\mathbf  A)$, $(e,f)\in L\big((a,b),(b,a)\big)$ and $(g,h)\in U\big((c,d),(d,c)\big)$. Then $e\in L(a,b)$, 
$f\in U(a,b)$, $g\in U(c,d)$, $h\in L(c,d)$, $L(a,b)\leq S\leq U(a,b)$ and $L(c,d)\leq S\leq U(c,d)$ and hence $e\leq S\leq f$ and $h\leq S\leq g$ which implies $e\leq S\leq g$ and $h\leq S\leq f$, i.e.\ $(e,f)\sqsubseteq(g,h)$ showing $L\big((a,b),(b,a)\big)\leq U\big((c,d),(d,c)\big)$.

Let $X\subseteq P_S(\mathbf A)$. Then  the following are equivalent:
\begin{align*}
	&(a,b)\in P_S(\mathbf  A), (a,b) \sqsubseteq X,\\
		&(a,b)\in P_S(\mathbf  A), a \leq p_1(X), b \geq p_2(X),\\
		&(a,b)\in P_S(\mathbf  A), a \in L(p_1(X)), b \in U(p_2(X)),\\
	&(a,b)\in \big(L(p_1(X))\times U(p_2(X))\big)\cap  P_S(\mathbf A).
\end{align*}
Hence 	$L(X)  =\big(L(p_1(X))\times U(p_2(X))\big)\cap  P_S(\mathbf A)$. Similarly, we obtain 
that $U(X) =\big(U(p_1(X))\times L(p_2(X))\big)\cap  P_S(\mathbf A)$.
\end{proof}

Let $\mathbf A=(A,\leq)$ be a poset and $B\subseteq A$. Recall that $\mathbf A$ is said to satisfy the {\em Ascending Chain Condition} {\rm(ACC)} or the {\em Descending Chain Condition} {\rm(DCC)} if in $\mathbf A$, every strictly ascending chain or every strictly descending chain, respectively, is finite. Let $\Max B$ and $\Min B$ denote the set of all maximal and minimal elements of $B$, respectively.

Hence if $\mathbf A=(A,\leq)$ satisfies {\rm(ACC)} or  {\rm(DCC)} then every 
$\emptyset\not = B\subseteq A$ contains maximal or minimal elements, respectively.

\begin{lemma}
Let $\mathbf A=(A,\leq)$ be a poset and $S$ a non-empty subset of $A$.
\begin{enumerate}[{\rm(i)}]
\item If $(S,\leq)$ satisfies the {\rm ACC} and the {\rm DCC}, then $P_S(\mathbf A)=P_{(\Max S)\cup(\Min S)}(\mathbf A)$,
\item if $\bigwedge S$ and $\bigvee S$ exist, then $P_S(\mathbf A)=P_{\{\bigwedge S,\bigvee S\}}(\mathbf A)$.
\end{enumerate}
\end{lemma}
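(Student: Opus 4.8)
The plan is to reduce both statements to a single observation: whether $(x,y)$ belongs to $P_S(\mathbf A)$ depends on $S$ only through the two sets $L(S)$ and $U(S)$. Indeed, unwinding the definitions, $L(x,y)\le S$ says precisely that $L(x,y)\subseteq L(S)$, and $S\le U(x,y)$ says precisely that $U(x,y)\subseteq U(S)$. Hence for any two non-empty subsets $S,T$ of $A$ with $L(S)=L(T)$ and $U(S)=U(T)$ we automatically have $P_S(\mathbf A)=P_T(\mathbf A)$. So in each of (i) and (ii) it suffices to exhibit the reduced index set $T$ and to check the two set equalities $L(S)=L(T)$ and $U(S)=U(T)$, noting in passing that $T$ is non-empty so that $\mathbf P_T(\mathbf A)$ is defined.

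For (i) I would take $T:=(\Max S)\cup(\Min S)$, computed with respect to the order induced on $S$. Since $S\neq\emptyset$ satisfies ACC and DCC, both $\Max S$ and $\Min S$ are non-empty, so $T\neq\emptyset$. From $T\subseteq S$ one gets $L(S)\subseteq L(T)$ and $U(S)\subseteq U(T)$ immediately. For the reverse inclusions the chain conditions enter: by DCC, starting from any $s\in S$ and descending along a strictly decreasing chain inside $S$, which must terminate, one finds $m\in\Min S$ with $m\le s$; consequently any element below all of $\Min S$ is below all of $S$, i.e.\ $L(\Min S)\subseteq L(S)$. Dually, by ACC every $s\in S$ lies below some element of $\Max S$, giving $U(\Max S)\subseteq U(S)$. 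Since $\Min S\subseteq T$ and $\Max S\subseteq T$, this yields $L(T)\subseteq L(\Min S)\subseteq L(S)$ and $U(T)\subseteq U(\Max S)\subseteq U(S)$, so the two sets agree and $P_S(\mathbf A)=P_{(\Max S)\cup(\Min S)}(\mathbf A)$.

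For (ii) I would put $a:=\bigwedge S$, $b:=\bigvee S$ and $T:=\{a,b\}$. The set $L(S)$ of lower bounds of $S$ has $a$ as its greatest element, by definition of the infimum, so $L(S)=L(a)$; dually $U(S)=U(b)$. Moreover $a\le b$, because for any $s\in S$ (and $S\neq\emptyset$) we have $a\le s\le b$. Hence $L(a)\subseteq L(b)$ and $U(b)\subseteq U(a)$, so $L(T)=L(a)\cap L(b)=L(a)=L(S)$ and $U(T)=U(a)\cap U(b)=U(b)=U(S)$. The reduction of the first paragraph then gives $P_S(\mathbf A)=P_{\{\bigwedge S,\bigvee S\}}(\mathbf A)$.

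I do not anticipate a serious obstacle: the whole argument is essentially bookkeeping with the polars $L$ and $U$. The only place demanding a little care is the step in (i) upgrading ``every non-empty subset of $S$ has minimal and maximal elements'' to ``every element of $S$ dominates some element of $\Min S$ and is dominated by some element of $\Max S$'', which genuinely uses the chain conditions rather than mere existence of $\Min S$ and $\Max S$; one must also keep in mind that $\Max S$ and $\Min S$ refer to the order restricted to $S$, not to $A$.
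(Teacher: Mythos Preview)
Your proof is correct and follows essentially the same approach as the paper. The paper's argument is terser---for (i) it merely states that under ACC/DCC every element of $S$ lies under a maximal and over a minimal element, and for (ii) it rewrites the condition $L(x,y)\le S\le U(x,y)$ directly as $L(x,y)\le\bigwedge S\le\bigvee S\le U(x,y)$---but the substance is identical; you have simply made explicit the general principle that $P_S(\mathbf A)$ depends on $S$ only through $L(S)$ and $U(S)$, which the paper in fact isolates separately in the subsequent lemma on $co(S)$.
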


\begin{proof}
\
\begin{enumerate}[(i)]
\item If $(S,\leq)$ satisfies the ACC, then every element of $S$ lies under some maximal element of $S$, and if $(S,\leq)$ meets the DCC, then every element of $S$ lies over some minimal element of $S$.
\item If $\bigwedge S$ and $\bigvee S$ exist then we have
\begin{align*}
P_S(\mathbf A) & =\{(x,y)\in A^2\mid L(x,y)\leq S\leq U(x,y)\}= \\
               & =\{(x,y)\in A^2\mid L(x,y)\leq\bigwedge S\leq\bigvee S\leq U(x,y)\}%
               =P_{\{\bigwedge S,\bigvee S\}}(\mathbf A).
\end{align*}
\end{enumerate}
\end{proof}

A {\em subset} $B$ of a poset $(A,\leq)$ is called {\em convex} if
\[
x,z\in B, y\in A\text{ and }x\leq y\leq z\text{ imply }y\in B.
\]

Let $S\subseteq A$. We put $co(S):=LU(S)\cap UL(S)$.

\begin{lemma}
	Let $\mathbf A=(A,\leq)$ be a poset and $S\subseteq A$. Then $co(S)$ is a convex set 
	containing $S$, 
	$L(S)=L(co(S))$,  $U(S)=U(co(S))$, and $P_S(\mathbf A)=P_{co(S)}(\mathbf A)$.
\end{lemma}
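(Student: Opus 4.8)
The plan is to check the four assertions in order, each by unwinding the definitions of $L$, $U$, $LU$, $UL$ together with the convention that $B\leq C$ means $x\leq y$ for all $x\in B$, $y\in C$. The one preliminary observation that organizes everything is the following reformulation of membership in the twist reduction: for $(x,y)\in A^2$ we have $L(x,y)\leq S$ if and only if every element of $L(x,y)$ lies below every element of $S$, i.e.\ $L(x,y)\subseteq L(S)$, and dually $S\leq U(x,y)$ if and only if $U(x,y)\subseteq U(S)$. Hence $P_S(\mathbf A)=\{(x,y)\in A^2\mid L(x,y)\subseteq L(S)\text{ and }U(x,y)\subseteq U(S)\}$, so once the polar equalities $L(S)=L(co(S))$ and $U(S)=U(co(S))$ are established, the last claim $P_S(\mathbf A)=P_{co(S)}(\mathbf A)$ is immediate.

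First I would show $S\subseteq co(S)$: any $s\in S$ lies below every upper bound of $S$, so $s\in LU(S)$, and dually $s\in UL(S)$, hence $s\in co(S)$. Next, convexity: given $x,z\in co(S)$, $y\in A$ with $x\leq y\leq z$, the membership $z\in LU(S)$ together with $y\leq z$ yields $y\leq u$ for every $u\in U(S)$, so $y\in LU(S)$; symmetrically $x\in UL(S)$ together with $x\leq y$ gives $y\in UL(S)$; therefore $y\in co(S)$. For $L(S)=L(co(S))$: the inclusion $L(co(S))\subseteq L(S)$ follows from $S\subseteq co(S)$, and conversely, if $\ell\in L(S)$ then for every $c\in co(S)\subseteq UL(S)$ the element $c$ is an upper bound of $L(S)$, so $\ell\leq c$, whence $\ell\in L(co(S))$. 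The equality $U(S)=U(co(S))$ is proved dually, using $co(S)\subseteq LU(S)$.

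Finally, combining the reformulation of the first paragraph with these two polar equalities gives, for all $(x,y)\in A^2$, that $(x,y)\in P_S(\mathbf A)$ iff $L(x,y)\subseteq L(S)=L(co(S))$ and $U(x,y)\subseteq U(S)=U(co(S))$ iff $(x,y)\in P_{co(S)}(\mathbf A)$. I do not expect a genuine obstacle here; the only place demanding care is keeping the directions of the inclusions straight in the polar reformulation and staying consistent with the convention for $B\leq C$, since a sign slip there would propagate through all four parts.
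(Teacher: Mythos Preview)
Your proof is correct and follows essentially the same approach as the paper's. The only organizational difference is that you front-load the reformulation $P_S(\mathbf A)=\{(x,y)\mid L(x,y)\subseteq L(S),\ U(x,y)\subseteq U(S)\}$ and then get $P_S=P_{co(S)}$ in one stroke from the polar equalities, whereas the paper handles the two inclusions separately and uses the closure identity $L(S)=LUL(S)$ (in place of your elementwise argument via $co(S)\subseteq UL(S)$) for the nontrivial direction of $L(S)=L(co(S))$; these are the same idea in slightly different packaging.
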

\begin{proof}
Let $x,z\in co(S), y\in A$ and $x\leq y\leq z$. Then $\{x, z\}\leq U(S)$. Since $y\leq z$, we obtain that $\{y\}\leq U(S)$, i.e., $y\in LU(S)$. Similarly, $y\in UL(S)$, i.e., 
$y\in co(S)$. Clearly, $S\subseteq LU(S)\cap UL(S)=co(S)$, $L(co(S))\subseteq L(S)$ 
and $U(co(S))\subseteq U(S)$. We have $co(S)\subseteq UL(S)$ and $co(S)\subseteq LU(S)$. 
We conclude $L(S)=LUL(S)\subseteq L(co(S))$ and $U(S)=ULU(S)\subseteq U(co(S))$, 
i.e., 	$L(S)=L(co(S))$ and  $U(S)=U(co(S))$.

Finally, since $S\subseteq co(S)$ we obtain 
$P_{co(S)}(\mathbf A) \subseteq P_S(\mathbf A)$. Assume now that 
$(x,y)\in P_S(\mathbf A)$. Then $L(x,y)\leq S\leq U(x,y)$. 
Hence $L(x,y)\subseteq L(S)=L(co(S))$ and  $U(x,y)\subseteq U(S)=U(co(S))$. 
We conclude $L(x,y)\leq co(S)\leq U(x,y)$, i.e., $(x,y)\in P_{co(S)}(\mathbf A)$. 
\end{proof}

We are going to show that for a given poset $\mathbf A=(A,\leq)$ and an element $a$ of $A$, the constructed pseudo-Kleene poset $\mathbf P_a(\mathbf A)$ contains $A$ as a convex subset.

\begin{lemma}\label{lem2}
Let $\mathbf A=(A,\leq)$ be a poset and $a\in A$ and let $f$ denote the mapping from $A$ to $P_a(\mathbf A)$ defined by
\[
f(x):=(x,a)\text{ for all }x\in A.
\]
Then $f(A)$ is a convex subset of $\big(P_a(\mathbf A),\sqsubseteq\big)$, 
$\mathbf A$ can be \LULU-embedded into $\big(P_{a}(\mathbf A),\sqsubseteq\big)$, and $f$ is an order isomorphism from $(A,\leq)$ to $\big(f(A),\sqsubseteq\big)$.
\end{lemma}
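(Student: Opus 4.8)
The plan is to verify three things in turn: that $f$ is an order isomorphism onto its image, that this image is convex, and that $f$ is moreover an $\LULU$-embedding. First I would check that $f$ is well-defined, i.e.\ that $(x,a)\in P_a(\mathbf A)$ for every $x\in A$. This is immediate from the definition of $P_a(\mathbf A)$: we need $L(x,a)\leq a\leq U(x,a)$, but every element of $L(x,a)$ is below $a$ (since $a\in\{x,a\}$) and symmetrically $a$ is below every element of $U(x,a)$. Next, $f$ is clearly injective, and $x\leq y$ iff $(x,a)\sqsubseteq(y,a)$ (the second coordinates are equal, so the twist order reduces to $x\leq y$ in the first coordinate and $a\leq a$ in the second), so $f$ is an order isomorphism from $(A,\leq)$ onto $\big(f(A),\sqsubseteq\big)$, and it is also order reflecting.

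For convexity of $f(A)$: suppose $(x,a)\sqsubseteq(u,v)\sqsubseteq(y,a)$ with $(u,v)\in P_a(\mathbf A)$. From the outer inequalities in the twist order we get $x\leq u\leq y$ and $a\leq v\leq a$, hence $v=a$, so $(u,v)=(u,a)\in f(A)$. This is the easy part.

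The substantive step is checking the two $\LULU$-conditions from \eqref{LUmor}. Here I would use the description of $L$ and $U$ in $\mathbf P_a(\mathbf A)$ supplied by the second Lemma of Section~2: for $X\subseteq P_a(\mathbf A)$ one has $L(X)=\big(L(p_1(X))\times U(p_2(X))\big)\cap P_a(\mathbf A)$, and dually for $U(X)$. Take a non-empty finite $X\subseteq A$; then $f(X)=X\times\{a\}$, so $p_1(f(X))=X$ and $p_2(f(X))=\{a\}$, giving $L(f(X))=\big(L(X)\times U(a)\big)\cap P_a(\mathbf A)$. To compute $L\big(f(UL(X))\big)$ I would note $p_1\big(f(UL(X))\big)=UL(X)$ and $p_2\big(f(UL(X))\big)=\{a\}$, so $L\big(f(UL(X))\big)=\big(LUL(X)\times U(a)\big)\cap P_a(\mathbf A)=\big(L(X)\times U(a)\big)\cap P_a(\mathbf A)$, using $LUL=L$. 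Thus the two sets coincide, which is the first required identity; the second, $U\big(f(Y)\big)=U\big(f(LU(Y))\big)$, follows in the same way from $ULU=U$. I expect the main (minor) obstacle to be bookkeeping: making sure the projections of $f$-images are computed correctly and that the identities $LUL=L$, $ULU=U$ are invoked at the right place, rather than any genuine difficulty — once the $L$/$U$ formulas for $\mathbf P_a(\mathbf A)$ are in hand, the $\LULU$-morphism conditions reduce to these standard closure identities.
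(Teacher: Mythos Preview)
Your proposal is correct and follows essentially the same approach as the paper: the paper likewise verifies well-definedness, proves convexity via the squeeze $a\leq v\leq a$, computes $L(f(X))$ and $U(f(X))$ using the projection formulas from the preceding lemma together with $LUL=L$ and $ULU=U$, and checks the order-reflecting condition from the equivalence $x\leq y\iff(x,a)\sqsubseteq(y,a)$. Your write-up is slightly more explicit about well-definedness and the bookkeeping, but there is no substantive difference.
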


\begin{proof}
It is clear that $f$ is a mapping from $A$ to $P_a(\mathbf A)$. If $b,c\in A$, $(d,e)\in P_a(\mathbf A)$ and $(b,a)\sqsubseteq(d,e)\sqsubseteq(c,a)$, then $a\leq e\leq a$ and hence $e=a$, which implies $(d,e)\in f(A)$. This shows that $f(A)$ is a convex subset of $\big(f(A),\sqsubseteq\big)$.

Assume now that $X\subseteq A$ is finite and non-empty. 
We compute:
$$\begin{aligned}
U(f(X))&=\big(U(X)\times L(a)\big) \cap P_{a}(\mathbf A)%
=\big(ULU(X)\times L(a)\big) \cap P_{a}(\mathbf A)=U(f(LU(X))),\\
L(f(X))&=\big(L(X)\times U(a)\big) \cap P_{a}(\mathbf A)%
=\big(LUL(X)\times U(a)\big) \cap P_{a}(\mathbf A)=L(f(UL(X))).
\end{aligned}
$$

Finally, for any $x,y\in A$, $x\leq y$ and $(x,a)\sqsubseteq(y,a)$ are equivalent. We conclude that $f$ is an order isomorphism from $(A,\leq)$ to $\big(f(A),\sqsubseteq\big)$.
\end{proof}

\section{Embeddings}

In Lemma~\ref{lem2}, we showed that for a poset $\mathbf A=(A,\leq)$ and an element $a$ of $A$, there is an embedding of $\mathbf A$ into $\big(P_a(\mathbf A),\sqsubseteq\big)$. A similar result can be shown for a distributive lattice $\mathbf L=(L,\vee,\wedge)$ and element $a$ of $L$. However, if the non-empty subset $S$ of $A$ is not a singleton, it is not so easy to find an embedding of $\mathbf L$ into $\big(P_S(\mathbf L),\sqcup,\sqcap\big)$. The following theorem provides a solution to this problem in a particular case.

\begin{theorem}\label{th3}
	Let $\mathbf A=(A,\leq)$ be a distributive poset and $a,b\in A$ with $a\leq b$ and assume that there exists an orthocomplementation 
	${}'$ on $([a,b],\leq)$. Further, assume that for every $x\in A$ satisfying $L(a)\subset L(U(x,a),b)\subset L(b)$ we have $L(U(x,a),b)= L(x)$. Then $\mathbf A$ 
	can be \LULU-embedded into 
	$\big(P_{\{a,b\}}(\mathbf A),\sqsubseteq\big)$ 
	and the  poset $([a,b],\leq, {}')$ is Boolean.
\end{theorem}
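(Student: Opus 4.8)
First, note that since $a\leq b$, the defining condition of $P_{\{a,b\}}(\mathbf A)$ collapses to ``$L(x,y)\leq a$ and $b\leq U(x,y)$''; equivalently, since $co(\{a,b\})=LU(\{a,b\})\cap UL(\{a,b\})=L(b)\cap U(a)=[a,b]$, the lemma on $co(S)$ gives $P_{\{a,b\}}(\mathbf A)=P_{[a,b]}(\mathbf A)$. The second assertion of the theorem is then immediate: the interval $[a,b]$, being a convex subset of the distributive poset $\mathbf A$, is itself a distributive poset, and for each $x\in[a,b]$ one has $L_{[a,b]}(x,x')=\{a\}=L_{[a,b]}([a,b])$ and $U_{[a,b]}(x,x')=\{b\}=U_{[a,b]}([a,b])$, so $x'$ is a complement of $x$ in $[a,b]$; hence $([a,b],\leq,{}')$ is distributive and complemented, i.e.\ Boolean.

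For the embedding the plan is to send $x$ to a pair whose second coordinate records the position of $x$ relative to $[a,b]$. First I would record the identity $L(U(x,a),b)=LU\big(L(x,b),L(a)\big)$ (a consequence of distributivity), so that $L(a)\subseteq L(U(x,a),b)\subseteq L(b)$ for every $x$, with $L(U(x,a),b)=L(x)$ exactly when $a\leq x\leq b$. Together with the hypothesis of the theorem this shows that for each $x$ exactly one of the following holds: {\rm(i)} $b\leq U(x,a)$ (i.e.\ $L(U(x,a),b)=L(b)$); {\rm(ii)} $L(x,b)\leq a$ (i.e.\ $L(U(x,a),b)=L(a)$); {\rm(iii)} $a\leq x\leq b$; and {\rm(i)}, {\rm(iii)} coincide only at $x=b$, {\rm(ii)}, {\rm(iii)} only at $x=a$. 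Put $g(x):=a$ in case {\rm(i)}, $g(x):=b$ in case {\rm(ii)}, $g(x):=x'$ in case {\rm(iii)} (consistent on the overlaps, since $a'=b$ and $b'=a$), and let $f(x):=(x,g(x))$.

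Next one verifies the three requirements for $f$ to be an \LULU-embedding. Membership $f(x)\in P_{\{a,b\}}(\mathbf A)$ is routine in cases {\rm(i)} and {\rm(ii)} (for {\rm(ii)} one uses $L(x,b)\subseteq LU(L(x,b),L(a))=L(U(x,a),b)=L(a)$); the crux is case {\rm(iii)}, where one must show $L(x,x')\leq a$ and $b\leq U(x,x')$ hold in $\mathbf A$ itself. For the former, if $z\leq x$, $z\leq x'$ and $z\not\leq a$, then $z\neq a$ and a short computation gives $L(U(z,a),b)=LU(z,a)$ with $L(a)\subsetneq LU(z,a)\subsetneq L(b)$, so the hypothesis forces $LU(z,a)=L(z)$; hence $a\leq z$ and $z\in L_{[a,b]}(x,x')=\{a\}$, a contradiction. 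The latter is dual: for an upper bound $w$ of $\{x,x'\}$ one has $L(U(w,a),b)=L(w,b)$, and if $w\not\geq b$ this set is strictly between $L(a)$ and $L(b)$, so the hypothesis gives $L(w,b)=L(w)$, i.e.\ $w\leq b$, a contradiction. Order-reflection of $f$ is immediate from the first coordinate, and order-preservation reduces to $x\leq y\Rightarrow g(y)\leq g(x)$, which one checks case by case using the monotonicity $x\leq y\Rightarrow L(U(x,a),b)\subseteq L(U(y,a),b)$ and the antitonicity of $'$ on $[a,b]$.

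It remains to establish the \LULU-identities for $f$. By the lemma computing $L$ and $U$ in $\mathbf P_S(\mathbf A)$ coordinatewise, together with $L(UL(X))=L(X)$, the identity $L(f(X))=L(f(UL(X)))$ reduces to the following claim: whenever $(u,v)\in P_{\{a,b\}}(\mathbf A)$, $u\leq X$ and $v\geq g(x)$ for all $x\in X$, then $v\geq g(y)$ for every $y\in UL(X)$ (and dually for $U$). The key points are: {\rm(a)} since $X\neq\emptyset$ and $g(X)\subseteq[a,b]$, automatically $v\geq a$; {\rm(b)} if $y\in UL(X)$ is of type {\rm(ii)} then so is $u$ (because $u\leq y$), and then $(u,v)\in P_{\{a,b\}}(\mathbf A)$ yields $L(U(u,v),b)=L(b)$, so the identity $L(U(u,v),b)=LU(L(u,b),L(v,b))$ together with $v\geq a$ gives $L(b)\subseteq L(v)$, i.e.\ $v\geq b=g(y)$; {\rm(c)} if $y$ is of type {\rm(iii)} then $g(y)=y'\leq b\leq U(u,v)$ and $L(u,y')\leq a$ (from $u\leq y$ and $L(y,y')\leq a$), so the same identity with $y'$ in place of $b$ gives $y'\leq v$; type {\rm(i)} is trivial by {\rm(a)}. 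The dual statement for $U$ follows by interchanging $L\leftrightarrow U$, $a\leftrightarrow b$, and {\rm(i)}$\leftrightarrow${\rm(ii)}. I expect these \LULU-identities, and in particular the sub-cases {\rm(b)} and {\rm(c)}, to be the main obstacle: this is precisely where distributivity of $\mathbf A$ and the technical hypothesis are genuinely used, through the structural fact that $L(x,x')=L(a)$ and $U(x,x')=U(b)$ in $\mathbf A$ whenever $a\leq x\leq b$.
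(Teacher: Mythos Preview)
Your argument for the \LULU-embedding is correct and, in fact, is organised more cleanly than the paper's. Where the paper performs a case analysis on which of $I$, $F$, $(a,b)$ the elements of $X$ belong to and then computes $U(f(X))$ in each case, you instead fix a single $y\in UL(X)$ and argue according to the type of $y$; your use of the distributive identity $L(U(u,v),c)=LU(L(u,c),L(v,c))$ with $c=b$ or $c=y'$ is a neat uniform device that replaces several separate computations in the paper. The structural fact $L_A(x,x')=L_A(a)$ (and its dual) for $x\in[a,b]$, which you isolate and prove via the technical hypothesis, is indeed the heart of the matter.

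There is, however, a genuine gap in the Boolean part. You write that $[a,b]$, ``being a convex subset of the distributive poset $\mathbf A$, is itself a distributive poset''; but distributivity in the $LU$-sense does \emph{not} pass to intervals automatically, because $L_{[a,b]}$ and $U_{[a,b]}$ can differ from $L_A\cap[a,b]$ and $U_A\cap[a,b]$ in the relevant way: an element $h\in A\setminus[a,b]$ with $h\le x$, $h\le z$ and $h\not\le a$ (for $x,z\in(a,b)$) would lie in $L_A(x,z)\setminus L_{[a,b]}(x,z)$ and can break the identity $L_{[a,b]}(U_{[a,b]}(x,y),z)=L_{[a,b]}U_{[a,b]}(L_{[a,b]}(x,z),L_{[a,b]}(y,z))$. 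The paper spends the last paragraph of its proof ruling out exactly such $h$ (and the dual $g\in U_A(x,y)\setminus[a,b]$) by means of the hypothesis $L(a)\subset L(U(h,a),b)\subset L(b)\Rightarrow L(U(h,a),b)=L(h)$, which forces $h\in[a,b]$. Only then can one compare $e\in L_{[a,b]}(U_{[a,b]}(x,y),z)$ with $d\in U_{[a,b]}(L_{[a,b]}(x,z),L_{[a,b]}(y,z))$ via the distributivity of $\mathbf A$. The good news is that this is precisely the same reduction you already carried out when proving $L_A(x,x')\le a$: your argument there works verbatim with $x'$ replaced by any $z\in(a,b)$, and yields $L_A(x,z)\subseteq[a,b]\cup L(a)$, which is exactly what is needed. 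So the fix is short, but the claim is not ``immediate'' and does require the hypothesis.
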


%%Recall that another proof of Theorem \ref{th3} is contained in \cite{CLP}. 

\begin{proof}
	Put
	\begin{align*}
		I & :=\{x\in A\mid L(x, b)\leq a\}, \\
		F & :=\{x\in A\mid b\leq U(x, a)\}.
	\end{align*}
	It is easy to see	$a\in I$ and $b\in F$. Let us show that $I$ is a Frink ideal and 
	$F$ is a Frink filter. Assume that $X\subseteq I$, $X$ finite. Let $X=\emptyset$. Then 
	either $LU(X)=\emptyset$ or $LU(X)=\{0\}$ where $0$ is the smallest element of $\mathbf A$. In both 
	cases, $LU(X)\subseteq I$. Suppose now that $X\not=\emptyset$, $X=\{x_1, \dots, x_n\}$ and $x\in LU(X)$. 
	Then $L(x,b)\subseteq L(U(X), b)=LU\big(L(x_1,b),L(x_2,b), \dots, L(x_n,b)\big)\subseteq %
	LU\big(L(a)\big)=L(a)$. Hence $L(x, b)\leq a$. Similarly, $F$ is a Frink filter.

	Let $'$ be an orthocomplementation on $([a,b],\leq)$ and define $f\colon L\rightarrow P_{\{a,b\}}(\mathbf L)$ as follows:
	\[
	f(x):=\left\{
	\begin{array}{ll}
		(x,b)  & \text{if }x\in I, \\
		(x,a)  & \text{if }x\in F, \\
		(x,x') & \text{otherwise}
	\end{array}
	\right.
	\]
	($x\in L$). Of course, $a'=b$ and $b'=a$. 
	
	If $x\in I$, then $L(x, b)\leq a\leq b\leq U(x,b)$ and $(x,b)\in P_{\{a,b\}}(\mathbf A)$. If 
	$x\in F$, then $L(x, a)\leq a\leq b\leq U(x,a)$  and $(x,a)\in P_{\{a,b\}}(\mathbf A)$.
	
	Assume first that $I\cap F\not=\emptyset$. Let $z\in I\cap F$. 
	Then $L(z,b)\leq a$ and $b\leq U(z,a)$. We conclude 
	$$b\in LU(z,a)\cap L(b)=LU\big(L(z,b), L(a,b)\big)\subseteq LU(L(a))=L(a).$$
	Hence $a=b$ and $I=F=A$. Evidently, $f$  is an \LULU-embedding by Lemma~\ref{lem2}.
	
	From now on, we will assume that $I\cap F=\emptyset$. We have that $a<b$.

	Let $c\in A$. If $c\notin I\cup F$ then 
		$L(c, b)\not\subseteq L(a)$ and $U(c, a)\not\subseteq U(b)$, i.e., $ L(b)=LU(b)\not\subseteq LU(c, a)$.

		$LU(c,a)\wedge L(b)=L\big(U(c,a),b\big)=LU\big(L(c,b),L(a,b)\big)%
		=LU\big(L(c,b),a\big)=L(c,b)\vee L(a)$ implies 
	\[
	L(a)\subset L(U(c,a),b)\subset L(b)
	\]
	and hence $L(U(c,a),b)= L(c)$. This shows $a< c < b$. Therefore 
	\[
	A\setminus(I\cup F)\subseteq [a,b]\setminus \{a,b\}.
	\]
	
	If $c\in I\cap[a,b]$, then $a\leq c\in L(c,b)\leq a$, which implies $c=a$. This implies
	\[
	I\cap[a,b]=\{a\}.
	\]
	Dually, we obtain
	\[
	F\cap[a,b]=\{b\}.
	\]
	Therefore, we have a partition 
	$$
	A=I\cupdot F \cupdot ([a,b]\setminus \{a,b\}). 
	$$
	We conclude 
	\[
	f(x)=(x,x')\text{ for all }x\in[a,b].
	\]
	
	Since $'$ is an orthocomplementation on $([a,b],\leq)$ we obtain that 
	$L(x,x')= L(a) \subset L(b) = LU(x,x')$, i.e., 
	$(x,x')\in P_{\{a,b\}}(\mathbf A)$.
	
	We have proved that $f$ is well-deﬁned.

	Let us verify that $f$ is an \LULU-embedding. We first show that 
	$f$ is an \LULU-morphism. Clearly, $f$ is order preserving.

	Let us put $Z_I=Z\cap I$, $Z_{(a,b)}=Z\cap ([a,b]\setminus \{a,b\})$, and 
	$Z_F=Z\cap F$ for any subset $Z\subseteq A$.

	Assume now that $X\subseteq A$ is finite and non-empty. 
 Suppose first that $X_F\not=\emptyset$. Then 
	$LU(X)\cap F\not=\emptyset$ and $U(X)\subseteq F$.

	We compute:
	$$
	U(f(X))=\big(U(X)\times L(a)\big) \cap P_{\{a,b\}}(\mathbf A)%
	=\big(ULU(X)\times L(a)\big) \cap P_{\{a,b\}}(\mathbf A)=U(f(LU(X))).
	$$

	From now on, we will assume that $X_F=\emptyset$.

	Case~1. Let $X_{(a,b)}=\emptyset$. 
	Then $X_I=X$ and $X\subseteq LU(X)\subseteq I$. We compute:
	$$
	\begin{aligned}
	&U\big(f(X)\big)=U(X\times \{b\})=\big(U(X)\times L(b)\big) \cap P_{\{a,b\}}(\mathbf A)\ 
	\text{and}\ U\big(f(LU(X))\big)=\\ %
	&U\big(LU(X)\times \{b\}\big)=\big(ULU(X)\times L(b)\big) \cap P_{\{a,b\}}(\mathbf A)=%
	\big(U(X)\times L(b)\big) \cap P_{\{a,b\}}(\mathbf A).
	\end{aligned}
	$$

	Case~2. Let $X_{(a,b)}\not=\emptyset$.  
	Then $f(X)=X_I\times \{b\}\cup \{(x,x')\mid x\in X_{(a,b)}\}$, 
	$X_{(a,b)}'\subseteq [a,b]\setminus \{a,b\}$
	 and  
	$L(a)\subseteq L(X_{(a,b)}')\subseteq L(b)$. We compute: 
	that 
	$$
	\begin{aligned}
	U\big(f(X)\big)&=\big(U(X)\times L(X_{(a,b)}')\big)\cap P_{\{a,b\}}(\mathbf A)%
	\supseteq 	U(f(LU(X))), \\
	U\big(f(LU(X))\big)&=\begin{cases}\big(ULU(X)\times L(a)\big)\cap P_{\{a,b\}}(\mathbf A),& 
		\text{if } LU(X)\cap F\not=\emptyset,\\
		\big(ULU(X)\times L((LU(X)_{(a,b)})')\big)\cap P_{\{a,b\}}(\mathbf A),&\text{otherwise.}
		\end{cases}\\
	&=\begin{cases}\big(U(X)\times L(a)\big)\cap P_{\{a,b\}}(\mathbf A),& 
%		\phantom{UL}
		\text{if } LU(X)\cap F\not=\emptyset,\\
		\big(U(X)\times L((LU(X)_{(a,b)})')\big)\cap P_{\{a,b\}}(\mathbf A),\phantom{UL}&\text{otherwise.}
	\end{cases}
    \end{aligned}
	$$
	
	Suppose first that $LU(X)\cap F\not=\emptyset$. Assume that $y\in L(X_{(a,b)}')\setminus L(a)$. We have 
	$$
	L(a)\subset L(U(y,a),b)=L(U(y,a))\subseteq L(x')\subset L(b)
	$$
	for every $x\in X_{(a,b)}$. Since $X_{(a,b)}$ is non-empty,  
	we therefore obtain 
	$$
	L(a)\subset L(U(y,a),b)=L(y)\subset L(b),
	$$
	i.e., $a<y <b$ and $a<y' <b$. Moreover, $y\in L(X_{(a,b)}')$ yields $y'\in U(X_{(a,b)})$, i.e., 
	$\emptyset\not=LU(X)\cap F \subseteq LU(X_I,y')\cap F$. Hence there is an element $z\in F$ such that $z\in LU(X_I,y')$ 
	and $b\leq U(z,a)$. Let $X_I=\{x_{i_1}, \dots, x_{i_k}\}$ and 
	$X_{(a,b)}=\{x_{j_1}, \dots, x_{j_l}\}$. 
	We compute:
	$$
		\begin{aligned}
	b\in LU(z,a)\cap L(b)&\subseteq LU\big(LU(X_I, y'),a\big)\cap L(b)=%
	 LU(X_I, y')\cap L(b)\\
	 &=L\big(U(x_{i_1},\dots, x_{i_k}, y'),b\big)=%
	 LU\big(L(x_{i_1},b), \dots, L(x_{i_k},b), L(y',b)\big)\\
	 &\subseteq LU\big(L(a), \dots, L(a), L(y')\big)=L(y'),
	 	\end{aligned}
	$$
	i.e., $b\leq y' < b$, a contradiction. Hence $L(a)= L(X_{(a,b)}')$ and 
	$U\big(f(LU(X))\big)=U\big(f(X)\big)$.
	
	Suppose now that $LU(X)\cap F=\emptyset$.
	
	Let us check that $L(X_{(a,b)}')=L\big(\{u'\mid u\in LU(X)\cap ([a,b]\setminus \{a,b\})\}\big)$.
	
	Clearly, $L(a)\subseteq L\big(\{u'\mid u\in LU(X)\cap ([a,b]\setminus \{a,b\})\}\big) %
	\subseteq L(X_{(a,b)}')\subseteq L(b)$. The first inclusion holds 	
    since $a < u < b$ and $a < u' < b$. The second one follows from the fact that $X_{(a,b)}\subseteq LU(X)$ and $X_{(a,b)}\subseteq ([a,b]\setminus \{a,b\})$. Hence also $X_{(a,b)}\subseteq LU(X)\cap ([a,b]\setminus \{a,b\})\subseteq [a,b]$, i.e.,  $X_{(a,b)}'\subseteq \big(LU(X)\cap ([a,b]\setminus \{a,b\})\big)'$.  
    
    Assume that $y\in L(X_{(a,b)}')\setminus L(a)$. We have 
	$$
	L(a)\subset L\big(U(y,a),b\big)=L\big(U(y,a)\big)\subseteq L(x')\subset L(b)
	$$
	for every $x\in X_{(a,b)}$. Since $X_{(a,b)}$ is non-empty (i.e., 
	at least one $x\in X_{(a,b)}$  exists) we obtain 
	$$
	L(a)\subset L\big(U(y,a),b\big)=L(y)\subset L(b),
	$$
	i.e., $a<y <b$  and $a<y' <b$. Moreover, $y\in L\big(X_{(a,b)}'\big)$ yields $y'\in U\big(X_{(a,b)}\big)$, i.e., $LU\big(X_{(a,b)}\big)\leq y'$.

	Let $u\in LU(X)\cap ([a,b]\setminus \{a,b\})$. We compute:
	$$
		\begin{aligned}
	u\in LU(X)\cap L(b)&=LU\big(L(x_{i_1}, b), \dots, L(x_{i_k}, b), L(x_{j_1}, b), \dots, L(x_{j_l}, b)\big)\\
	&\subseteq LU\big(L(a), \dots, L(a), L(x_{j_1}), \dots, L(x_{j_l}\big))=LU\big(X_{(a,b)}\big)\leq y'.
		\end{aligned}
	$$
	
	 This implies $u\leq y'$, i.e., 
	$y\leq u'$ and $LU(X)\cap L(b)=LU\big(X_{(a,b)}\big)$. Therefore, $y\in L\big(\{u'\mid u\in LU(X)\cap ([a,b]\setminus \{a,b\})\}\big)$. 
	
	We compute:
	$$
	\begin{aligned}
		U\big(f(LU(X))\big)&=U\big(\big(LU(X)\cap I\big)\times \{b\}\cup %
		\{(u,u')\mid u\in LU(X)\cap ([a,b]\setminus \{a,b\})\}\big)\\
		&=\big(ULU(X)\times L\big(\{u'\mid u\in LU(X)\cap ([a,b]\setminus \{a,b\})\}%
		\big)\cap P_{\{a,b\}}(\mathbf A)\\
		&= \big(ULU(X)\times L(X_{(a,b)}')\big)\cap P_{\{a,b\}}(\mathbf A)%
		= \big(U(X)\times L(X_{(a,b)}')\big)\cap P_{\{a,b\}}(\mathbf A)\\
		&=U\big(f(X)\big).
	\end{aligned}
	$$
	
	Similarly, we  obtain that $L\big(f(UL(X))\big)=L\big(f(X)\big)$ for every 
	 finite and non-empty $X\subseteq A$. 
	
%%	Since $f$ is an  \LULU-morphism $f$ preserves order. 

	Assume now that $c, d\in A$, $f(c)=(c,u)\sqsubseteq (d,v)=f(d)$. Then $c\leq d$ and $f$ is order-reflecting.
	
	It remains to check that $([a,b],\leq, {}')$ is a Boolean poset. Clearly, it is 
	complemented. We have to verify that it is distributive. 
	
	Assume that $x, y, z\in [a,b]$. If $\{x, y,z\}\cap \{a,b\}\not=\emptyset$ 
	 then evidently $L_{[a,b]}\big(U_{[a,b]}(x,y),z\big) = %
	  L_{[a,b]}U_{[a,b]}\big(L_{[a,b]}(x,z),L_{[a,b]}(y,z)\big)$. 
	 Hence we may assume that  $a< x, y, z < b$.

	 Let 
	$d\in U_{[a,b]}\big(L_{[a,b]}(x, z), L_{[a,b]}(y, z)\big)$ and 
	$e \in L_{[a,b]}\big(U_{[a,b]}(x, y),z\big)$. 
	We will show that $d\in U\big(L(x, z), L(y, z)\big)$ and 
	$e\in L\big(U(x, y),z\big)$.

	Then 
	$d \geq  L(x, z)$.  Namely, if $d \not\geq  L(x, z)$ then 
	there is $h\in A\setminus [a,b]$ 
	such that $h\not \leq d$, $h< x$ and $h<z$. Hence also $h< b$ and 
	$h\not\leq a$. 

 Clearly, 
	$L(a)\subseteq L\big(U(h,a),b\big)=LU(h,a)\subseteq L(b)$. Assume first $L(a)= LU(h,a)$. Then 
	$h\in LU(h,a)=L(a)$, a contradiction. We have 
	$ LU(h,a)\subseteq L(x)\subset L(b)$. Hence 
	$L(a)\subset L\big(U(h,a),b\big)=L\big(U(h,a)\big)\subset L(b)$, i.e., $a< h <b$, a contradiction again. 
	Therefore really $d \geq  L(x, z)$ and similarly $d \geq  L(y, z)$. Hence 
	$d\in U\big(L(x, z), L(y, z)\big)$.

	Further, $e\in L(z)$ since $e\in  L_{[a,b]}(z)$. Let us check that $e\in LU(x, y)$. Assume that  
	$e\not\in LU(x, y)$. Since 	$e \in L_{[a,b]}\big(U_{[a,b]}(x, y),z\big)$  there exists 
	$g\in U(x,y)\setminus U_{[a,b]}(x, y)$ such that $e\not\leq g$. We have 
	$a < x \leq g\not=b$. Hence 
	$L(a)\subseteq L\big(U(g,a),b\big)=L(g,b)\subseteq L(b)$. 
	Suppose first that 
	$L(g,b)=L(b)$. Then $e\leq b < g$, a contradiction with $e\not\leq g$. Also, 
	$L(a)\subset L(x)\subseteq L(g,b)=L\big(U(g,a),b\big) \subset L(b)$. We conclude $a< g < b$, 
	which is impossible since 	$g\notin  [a,b]$. 
	Therefore $e\in LU(x, y)$ and we obtain 
	$e\in LU(x, y)\cap L(z)=L\big(U(x, y),z\big)$.
	Since $\mathbf A$ is distributive  we see that $e\leq d$. 
	Consequently  $U_{[a,b]}L_{[a,b]}\big(U_{[a,b]}(x,y),z\big)\supseteq 
	U_{[a,b]}\big(L_{[a,b]}(x, z), L_{[a,b]}(y, z)\big)$ 
	which yields that $([a,b],\leq)$ is distributive. 
	\end{proof}

\begin{remark} \rm Note that Theorem  \ref{th3} generalizes 
	\cite[Lemma 6]{CLP} formulated for distributive lattices. We also prefer to use orthocomplementation instead of antitone complementation as in 
	\cite[Lemma 6]{CLP}.
\end{remark}

In the sequel, we use the following notation: If $\mathbf A_1$ and $\mathbf A_2$ are posets with top and bottom elements, respectively, then by $\mathbf A_1+_a\mathbf A_2$ we denote 
the ordinal sum of $\mathbf A_1$ and $\mathbf A_2$ where the top element $a$ of $\mathbf A_1$ is identified with the bottom element of $\mathbf A_2$. If $\mathbf A_1$, $\mathbf A_2$, and $\mathbf A_3$ 
are posets with top element, bottom and top element, and bottom element, respectively, then by 
$\mathbf A_1+_a\mathbf A_2+_b\mathbf A_3$ we denote the ordinal sum of $\mathbf A_1$, $\mathbf A_2$, and $\mathbf A_3$ where the top element $a$ of $\mathbf A_1$ is identified with the bottom element of $\mathbf A_2$ and the top element $b$ of $\mathbf A_2$ is identified with the bottom element of $\mathbf A_3$. For every poset $\mathbf A$, let $\mathbf A^d$ denote its dual.

\begin{lemma}\label{osumdist}
	Let $\mathbf A_1=(A_1,\leq)$ and $\mathbf A_2=(A_2,\leq)$ be distributive posets with top element $a$ and bottom element $a$, respectively. Then $\mathbf A_1+_a\mathbf A_2$ is a distributive poset. 
\end{lemma}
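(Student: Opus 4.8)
The plan is to fix the chosen \LULU-identity
\[
L\big(U(x,y),z\big)\approx LU\big(L(x,z),L(y,z)\big)
\]
— the first one in the list defining distributivity; since those six identities are equivalent, it suffices to verify this one — and to check it directly in $\mathbf A:=\mathbf A_1+_a\mathbf A_2$. Write $A=A_1\cup A_2$, $A_1\cap A_2=\{a\}$; by construction $A_1=L(a)$ is a down-set, $A_2=U(a)$ is an up-set, and every element of $A_1$ lies below every element of $A_2$. The first step is a routine description of cones in $\mathbf A$: for a non-empty finite $M\subseteq A$, if $M\subseteq A_1$ then $L(M)=L_{\mathbf A_1}(M)$ and $U(M)=U_{\mathbf A_1}(M)\cup A_2$; if $M\subseteq A_2$ then $U(M)=U_{\mathbf A_2}(M)$ and $L(M)=L_{\mathbf A_2}(M)\cup A_1$; and if $M$ meets both $A_1\setminus\{a\}$ and $A_2\setminus\{a\}$ then $L(M)=L_{\mathbf A_1}(M\cap A_1)$ and $U(M)=U_{\mathbf A_2}(M\cap A_2)$. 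Each clause is immediate from the order on the ordinal sum, since any lower bound of a set that contains $a$ or an element of $A_2\setminus\{a\}$ already lies in $A_1$, and dually.

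The inclusion $LU\big(L(x,z),L(y,z)\big)\subseteq L\big(U(x,y),z\big)$ holds in every poset, because $z$ and every $u\in U(x,y)$ belong to $U\big(L(x,z),L(y,z)\big)$ (using $L(x,z)\le z$, $L(y,z)\le z$, $L(x,z)\le x\le u$, $L(y,z)\le y\le u$), so any common lower bound of $U\big(L(x,z),L(y,z)\big)$ is below $z$ and below every such $u$. It therefore remains to prove the reverse inclusion, and for this I would carry out a case analysis on the positions of $x,y,z$ in the partition $A=(A_1\setminus\{a\})\cupdot\{a\}\cupdot(A_2\setminus\{a\})$, using that $x$ and $y$ are interchangeable and that, since distributivity is self-dual and $(\mathbf A_1+_a\mathbf A_2)^d=\mathbf A_2^d+_a\mathbf A_1^d$, the cases with $z\in A_2\setminus\{a\}$ reduce to those with $z\in A_1\setminus\{a\}$. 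Feeding the cone description into the remaining cases, the identity unwinds as follows: when $x,y,z$ all lie in $A_1$ (respectively all in $A_2$) it becomes exactly the above \LULU-identity in $\mathbf A_1$ (respectively in $\mathbf A_2$), which holds by hypothesis; when $z=a$ both sides reduce to $L_{\mathbf A_1}\big(U_{\mathbf A_1}(x,y)\big)$ if $x,y\in A_1$ and to $A_1$ otherwise (using only $UL(x)=U(x)$ and $L_{\mathbf A_2}(A_2)=\{a\}$); and in the genuinely mixed cases one of the two summands collapses to a single bound — for instance if $z\in A_2\setminus\{a\}$ and $x\in A_1$ then $L(x,z)\subseteq A_1$ and both sides equal $A_1\cup L_{\mathbf A_2}(y,z)$ by $LUL=L$ — so the two sides agree.

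The part that needs genuine care — the main obstacle — is the bookkeeping in the mixed cases together with the empty-cone conventions $U(\emptyset)=L(\emptyset)=A$: one must track when some $U_{\mathbf A_i}(\,\cdot\,)$ or $L_{\mathbf A_i}(\,\cdot\,)$ happens to be empty, and one must make sure the correct clause of the cone description is invoked on each side of the identity, so that the two sets actually come out equal. The subtlest instances are the row $z=a$ and the sub-case $U_{\mathbf A_2}(x,y)=\emptyset$ with $x,y,z\in A_2\setminus\{a\}$, where both sides degenerate and the equality has to be extracted from distributivity of $\mathbf A_1$ or $\mathbf A_2$ applied to an empty cone. It is worth noting that only the ``all three in one summand'' cases actually use the distributivity hypothesis; the mixed cases hold for arbitrary posets $\mathbf A_1,\mathbf A_2$ having the stated top and bottom. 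Once all cases are settled, $\mathbf A_1+_a\mathbf A_2$ satisfies the chosen \LULU-identity, hence all six, and is therefore a distributive poset. (Alternatively, once the basic facts about Dedekind--MacNeille completions are in hand, one could instead argue that $\BDM(\mathbf A_1+_a\mathbf A_2)$ is the ordinal sum of the distributive lattices $\BDM(\mathbf A_1)$ and $\BDM(\mathbf A_2)$ glued along $a$, hence a distributive lattice, whence $\mathbf A_1+_a\mathbf A_2$ is distributive; but we prefer the self-contained verification.)
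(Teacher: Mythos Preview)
Your approach is correct and shares the paper's basic plan: describe cones in the ordinal sum, then case-split on the positions of $x,y,z$ and invoke distributivity of the relevant summand when all three land in the same $A_i$. The paper organizes the split more economically: it observes that if no two of $x,y,z$ are comparable then all three must lie in the same summand (elements from different summands are always comparable), and it dismisses the ``some pair comparable'' case as evident; you instead work through the mixed positional cases explicitly. Your route is longer but arguably more honest, since the paper's ``evidently'' is not a general fact about posets (it fails in $N_5$ with $x<z$) and tacitly leans on exactly the kind of computation you spell out.

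One small caveat: your duality reduction does not go through as a formal reduction. Applying the partial result for ``$z$ in the bottom summand'' to $\mathbf A^d=\mathbf A_2^d+_a\mathbf A_1^d$ yields the \emph{dual} identity $U(L(x,y),z)=UL(U(x,z),U(y,z))$ in $\mathbf A$ for $z\in A_2$, not the identity you fixed; since the six identities are only equivalent globally (for all triples at once), this does not directly close the case. The repair is immediate---handle the $z\in A_2$ cases directly by the analogous computation, as you in fact begin to do in your worked example---so this is a wrinkle rather than a gap.
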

\begin{proof}
Let $x\in A_1$ and $y\in A_2$. Then $L_{\mathbf A_1+_a\mathbf A_2}(x)=L_{\mathbf A_1}(x)$, 
$L_{\mathbf A_1+_a\mathbf A_2}(y)=L_{\mathbf A_2}(y)\cup A_1$, 
$U_{\mathbf A_1+_a\mathbf A_2}(y)=U_{\mathbf A_2}(y)$, 
$U_{\mathbf A_1+_a\mathbf A_2}(x)=U_{\mathbf A_1}(x)\cup A_2$.

	Assume that $x, y, z\in \mathbf A_1+_a\mathbf A_2$. If some pair of elements 
	$x, y, z$ is comparable, then evidently 
	\begin{align*}
		L_{\mathbf A_1+_a\mathbf A_2}\big(U_{\mathbf A_1+_a\mathbf A_2}(x,y),z\big) %
& = L_{\mathbf A_1+_a\mathbf A_2}U_{\mathbf A_1+_a\mathbf A_2}%
\big(L_{\mathbf A_1+_a\mathbf A_2}(x,z),L_{\mathbf A_1+_a\mathbf A_2}(y,z)\big).
		\end{align*}

	Suppose now that there is no pair of comparable elements from 
	$\{x, y, z\}$. Then either 	$\{x, y, z\}\subseteq A_1$ or 
	$\{x, y, z\}\subseteq A_2$. Assume first that $\{x, y, z\}\subseteq A_1$. 
	We compute:
	\begin{align*}
		L_{\mathbf A_1+_a\mathbf A_2}\big(U_{\mathbf A_1+_a\mathbf A_2}(x,y),z\big) %
			& = L_{\mathbf A_1+_a\mathbf A_2}\big(U_{\mathbf A_1}(x,y)\cup A_2,z\big) \\
			& = L_{\mathbf A_1}\big(U_{\mathbf A_1}(x,y),z\big) =%
			L_{\mathbf A_1}U_{\mathbf A_1}\big(L_{\mathbf A_1}(x,z),L_{\mathbf A_1}(y,z)\big)\\
			& =%
			L_{\mathbf A_1}U_{\mathbf A_1}\big(L_{\mathbf A_1+_a\mathbf A_2}(x,z),L_{\mathbf A_1+_a\mathbf A_2}(y,z)\big)\\
		& = L_{\mathbf A_1+_a\mathbf A_2}U_{\mathbf A_1+_a\mathbf A_2}%
		\big(L_{\mathbf A_1+_a\mathbf A_2}(x,z),L_{\mathbf A_1+_a\mathbf A_2}(y,z)\big).
	\end{align*}

The case $\{x, y, z\}\subseteq A_2$ can be verified by the same procedure. 
\end{proof}

The following proposition enables us to determine a broad class of representable Kleene posets by using ordinal sums of distributive posets.

\begin{proposition}\label{prop2}
	Let $\mathbf A_1=(A_1,\leq)$ and $\mathbf A_2=(A_2,\leq)$ be distributive posets with top element $a$ and bottom element $b$, respectively, and $\mathbf B=(B,\leq,{}')$ a  Boolean poset with bottom element $a$ and top element $b$. If $a\not=b$ or $a$ is join-irreducible in $\mathbf A_1$ and meet-irreducible in $\mathbf A_2$ then $\big(P_{ab}(\mathbf A_1+_a\mathbf B+_b\mathbf A_2),\sqsubseteq,{}'\big)$ is 
	a Kleene poset and 
	\[
	\big(P_{ab}(\mathbf A_1+_a\mathbf B+_b\mathbf A_2),\sqsubseteq\big)\cong(\mathbf A_1\times\mathbf A_2^d)+_{(a,b)}\mathbf B+_{(b,a)}(\mathbf A_2\times\mathbf A_1^d).
	\]
\end{proposition}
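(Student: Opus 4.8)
The plan is to exhibit an explicit order isomorphism
\[
\Phi\colon (\mathbf A_1\times\mathbf A_2^d)+_{(a,b)}\mathbf B+_{(b,a)}(\mathbf A_2\times\mathbf A_1^d)\longrightarrow \big(P_{ab}(\mathbf A_1+_a\mathbf B+_b\mathbf A_2),\sqsubseteq\big)
\]
and then read off the Kleene-poset property from the structure of the right-hand side together with Lemma on $\mathbf P_S(\mathbf A)$ and Theorem~\ref{th3}. Write $\mathbf C:=\mathbf A_1+_a\mathbf B+_b\mathbf A_2$ for the generating poset; note $\mathbf C$ is distributive by two applications of Lemma~\ref{osumdist} (using that a Boolean poset is distributive), and $a\le b$ in $\mathbf C$ with $[a,b]_{\mathbf C}=B$ carrying the orthocomplementation $'$. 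First I would describe $\Phi$ piecewise: for $x\in B$ put $\Phi(x):=(x,x')$; for $(x,y)$ in the lower summand $\mathbf A_1\times\mathbf A_2^d$ (so $x\in A_1$, $y\in A_2$, with the dual order on the second coordinate) put $\Phi(x,y):=(x,y)$ read inside $\mathbf C$; and symmetrically for $(x,y)\in\mathbf A_2\times\mathbf A_1^d$ put $\Phi(x,y):=(x,y)$ — here the first coordinate lives in $A_2$, the second in $A_1$. The identifications $(a,b)\mapsto(a,b)=(a,a')$ and $(b,a)\mapsto(b,a)=(b,b')$ make the three pieces agree on the glued points, so $\Phi$ is well defined.

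The bulk of the verification is that $\Phi$ is a bijection onto $P_{ab}(\mathbf C)$ that is an order isomorphism. For surjectivity and well-definedness one computes, for $(u,v)\in C^2$, the sets $L_{\mathbf C}(u,v)$ and $U_{\mathbf C}(u,v)$ using the explicit description of $L,U$ in an ordinal sum from the proof of Lemma~\ref{osumdist} (extended to the three-fold sum), and checks that the condition $L(u,v)\le\{a,b\}\le U(u,v)$ forces $(u,v)$ into exactly one of the three ranges above: if $u\in A_1\setminus\{a\}$ then the bound $L(u,v)\le a$ combined with $b\le U(u,v)$ pins $v\in A_2$ and conversely; if $u\in B\setminus\{a,b\}$ then $L(u,v)\le a$ and $b\le U(u,v)$ together with the orthocomplementation on $B$ force $v=u'$ (this is exactly the Boolean-interval argument from Theorem~\ref{th3}); and the $A_2$ first-coordinate case is dual. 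Order preservation and reflection in each piece is immediate from the definition of $\sqsubseteq$ (covariant in the first coordinate, contravariant in the second, which is precisely why the second factor is dualized); comparabilities across different pieces match the ordinal-sum order on the left because, e.g., any element with first coordinate in $A_1\setminus\{a\}$ is $\sqsubseteq$ every element with first coordinate in $B$, etc. The hypothesis ``$a\ne b$, or $a$ join-irreducible in $\mathbf A_1$ and meet-irreducible in $\mathbf A_2$'' is exactly what is needed so that the hypotheses of Theorem~\ref{th3} apply to $\mathbf C$ with the interval $[a,b]=B$ — in particular the condition that $L(U(x,a),b)$ strictly between $L(a)$ and $L(b)$ implies $L(U(x,a),b)=L(x)$ — and also so that the three summands glue without collapsing; I would dispatch the degenerate case $a=b$ (where $\mathbf B$ is trivial and the statement reduces to $P_a$ of a two-fold ordinal sum, handled as in Lemma~\ref{lem2}) separately at the start.

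Once $\Phi$ is an order isomorphism, the Kleene-poset claim follows: $\mathbf P_{ab}(\mathbf C)$ is a pseudo-Kleene poset by the Lemma producing pseudo-Kleene posets from arbitrary posets, so it only remains to see it is distributive, and distributivity is preserved under order isomorphism, so it suffices to show the right-hand side is a distributive poset. That side is an ordinal sum $\mathbf D_1+_{(a,b)}\mathbf B+_{(b,a)}\mathbf D_2$ where $\mathbf D_1=\mathbf A_1\times\mathbf A_2^d$ and $\mathbf D_2=\mathbf A_2\times\mathbf A_1^d$ are distributive (a direct product of distributive posets is distributive, and the dual of a distributive poset is distributive by the self-dual list of LU-identities defining distributivity) and $\mathbf B$ is distributive; two applications of Lemma~\ref{osumdist} then give distributivity of the whole sum. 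The main obstacle I anticipate is the careful bookkeeping in the surjectivity/well-definedness step — correctly computing $L$ and $U$ in the three-fold ordinal sum and showing the defining inequality of $P_{ab}$ partitions $C^2$ into the three pieces, with the Boolean-interval forcing $v=u'$ being the delicate part, precisely mirroring the partition $A=I\cupdot([a,b]\setminus\{a,b\})\cupdot F$ and the identity $f(x)=(x,x')$ on $[a,b]$ established in the proof of Theorem~\ref{th3}.
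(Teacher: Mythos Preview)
Your approach is essentially the paper's: exhibit the explicit set equality
\[
P_{ab}(\mathbf A_1+_a\mathbf B+_b\mathbf A_2)=(A_1\times A_2)\cup\{(x,x')\mid x\in B\}\cup(A_2\times A_1),
\]
check it by a case split on which summand the first coordinate lies in, observe that the orders match, and then get distributivity of the ordinal sum from Lemma~\ref{osumdist} together with closure of distributivity under duals and products. So the plan is correct and aligned with the paper.

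One point is misdirected, though. You attribute the hypothesis ``$a\ne b$, or $a$ join-irreducible in $\mathbf A_1$ and meet-irreducible in $\mathbf A_2$'' to making the preconditions of Theorem~\ref{th3} hold. The paper does not invoke Theorem~\ref{th3} at all here, and you do not need it either: the fact that $u\in B\setminus\{a,b\}$ forces $v=u'$ follows directly from $\mathbf B$ being a Boolean poset (so $u'$ is the unique complement of $u$ in $[a,b]$), not from the embedding machinery of Theorem~\ref{th3}. The hypothesis is used for your \emph{other} stated reason---preventing collapse in the degenerate case $a=b$. Concretely, if $a=b$ and $a=u\vee v$ in $\mathbf A_1$ with $u,v<a$, then $(u,v)\in P_a(\mathbf C)$ but $(u,v)\notin(A_1\times A_2)\cup(A_2\times A_1)$, so the set equality fails; join- and meet-irreducibility of $a$ rule this out. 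Similarly, Lemma~\ref{lem2} is about embedding $\mathbf A$ into $P_a(\mathbf A)$, not about describing $P_a(\mathbf A)$ itself, so it does not handle the $a=b$ case for you; that case is covered by the same direct computation as the general one.
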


\begin{proof}
	We have
	\[
	P_{ab}(\mathbf A_1+_a\mathbf B+_b\mathbf A_2)=(A_1\times A_2)\cup\{(x,x')\mid x\in B\}\cup(A_2\times A_1)
	\]
	and the order relations on both sides coincide. The equality follows from the following facts: 
	\begin{enumerate}
		\item if $x\in B\setminus \{a,b\}$ then the only element $y\in A_1\cup B\cup A_2$ satisfying 
		$L_{\mathbf A_1+_a\mathbf B+_b\mathbf A_2}(x,y)\leq a$ and 
		$U_{\mathbf A_1+_a\mathbf B+_b\mathbf A_2}(x,y)\geq b$ is the element $x'\in B$, 
		\item if $x\in A_1$ and $(x,y)\in P_{ab}(\mathbf A_1+_a\mathbf B+_b\mathbf A_2)$ then 
		$y\in A_2$,
		\item if $x\in A_2$ and $(x,y)\in P_{ab}(\mathbf A_1+_a\mathbf B+_b\mathbf A_2)$ then 
		$y\in A_1$, 
		\item $(A_1\times A_2)\cup\{(x,x')\mid x\in B\}\cup(A_2\times A_1)\subseteq %
		P_{ab}(\mathbf A_1+_a\mathbf B+_b\mathbf A_2)$.
	\end{enumerate} 
	
	Since the dual of a distributive poset is again 
	distributive and the cartesian product of distributive posets is distributive, we obtain by Lemma 
	\ref{osumdist} that the ordinal sum $(\mathbf A_1\times\mathbf A_2^d)+_{(a,b)}\mathbf B+_{(b,a)}(\mathbf A_2\times\mathbf A_1^d)$ is also distributive. Hence 
	$\big(P_{ab}(\mathbf A_1+_a\mathbf B+_b\mathbf A_2),\sqsubseteq,{}'\big)$ is 
	a Kleene poset.
\end{proof}

\begin{corollary}\label{cor3}
	Let $\mathbf A_1=(A_1,\leq)$ and $\mathbf A_2=(A_2,\leq)$ be distributive posets with top element $a$ and bottom element $b$, respectively, and $\mathbf B=(B,\leq,{}')$ a  Boolean poset with bottom element $a$ and top element $b$. If $a\not=b$ or $a$ is join-irreducible in $\mathbf A_1$ and meet-irreducible in $\mathbf A_2$ then $\big(P_{ab}(\mathbf A_1+_a\mathbf B),\sqsubseteq,{}'\big)$ and 
	$\big(P_{ab}(\mathbf B+_b\mathbf A_2),\sqsubseteq,{}'\big)$ are  Kleene posets and 
	\begin{align*}
		\big(P_{ab}(\mathbf A_1+_a\mathbf B),\sqsubseteq\big) & \cong\mathbf A_1+_{(a,b)}\mathbf B+_{(b,a)}\mathbf A_1^d, \\
		\big(P_{ab}(\mathbf B+_b\mathbf A_2),\sqsubseteq\big) & \cong\mathbf A_2^d+_{(a,b)}\mathbf B+_{(b,a)}\mathbf A_2.
	\end{align*}
\end{corollary}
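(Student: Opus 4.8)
The plan is to deduce both isomorphisms from Proposition~\ref{prop2} by choosing suitable trivial summands. For the first isomorphism, I would take the poset $\mathbf A_2$ in Proposition~\ref{prop2} to be the one-element poset, so that $b$ is simultaneously the bottom and (the only) element of $\mathbf A_2$, and $\mathbf A_2=\mathbf A_2^d$ is this singleton. The hypothesis of Proposition~\ref{prop2} is met: if $a\neq b$ this is immediate, and if $a=b$ then the singleton $\mathbf A_2$ has $b$ join- and meet-irreducible trivially, while $a$ is join-irreducible in $\mathbf A_1$ and meet-irreducible in $\mathbf A_2$ by assumption. Then $\mathbf A_1+_a\mathbf B+_b\mathbf A_2=\mathbf A_1+_a\mathbf B$ (adjoining a top that is already there changes nothing), so Proposition~\ref{prop2} gives that $\big(P_{ab}(\mathbf A_1+_a\mathbf B),\sqsubseteq,{}'\big)$ is a Kleene poset and
\[
\big(P_{ab}(\mathbf A_1+_a\mathbf B),\sqsubseteq\big)\cong(\mathbf A_1\times\mathbf A_2^d)+_{(a,b)}\mathbf B+_{(b,a)}(\mathbf A_2\times\mathbf A_1^d)\cong\mathbf A_1+_{(a,b)}\mathbf B+_{(b,a)}\mathbf A_1^d,
\]
using $\mathbf A_1\times\{\ast\}\cong\mathbf A_1$ and $\{\ast\}\times\mathbf A_1^d\cong\mathbf A_1^d$.

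For the second isomorphism I would symmetrically take $\mathbf A_1$ to be the one-element poset, whose top element is $a$; again the hypothesis of Proposition~\ref{prop2} holds for the same reasons. Then $\mathbf A_1+_a\mathbf B+_b\mathbf A_2=\mathbf B+_b\mathbf A_2$, and Proposition~\ref{prop2} yields
\[
\big(P_{ab}(\mathbf B+_b\mathbf A_2),\sqsubseteq\big)\cong(\mathbf A_1\times\mathbf A_2^d)+_{(a,b)}\mathbf B+_{(b,a)}(\mathbf A_2\times\mathbf A_1^d)\cong\mathbf A_2^d+_{(a,b)}\mathbf B+_{(b,a)}\mathbf A_2,
\]
and the Kleene-poset claim follows as well.

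The only point that needs a word of care is that the ordinal-sum notation $\mathbf A_1+_a\mathbf B$ introduced before Lemma~\ref{osumdist} presupposes $\mathbf A_1$ has a top element and $\mathbf B$ a bottom element, which is exactly what is hypothesized; and that the degenerate ordinal sum with a one-element summand is genuinely the identity operation on posets (identifying the top of $\mathbf A_1$ with the unique element of a singleton, or the unique element of a singleton with the bottom of $\mathbf B$). I expect no real obstacle here, since the corollary is a pure specialization; the only thing to verify explicitly is that the irreducibility hypothesis transfers correctly, which it does because a singleton poset vacuously has its element both join- and meet-irreducible.
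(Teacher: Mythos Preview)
Your proposal is correct and follows exactly the paper's approach: the paper's proof is the single line ``It is enough to put $\mathbf A_2:=\mathbf 1$ or $\mathbf A_1:=\mathbf 1$ and use Proposition~\ref{prop2}.'' Your additional remarks about the singleton satisfying the irreducibility hypotheses and the degenerate ordinal sum and product being identities are accurate elaborations of this specialization.
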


\begin{proof}
	It is enough to put $\mathbf A_2:=\mathbf 1$ or $\mathbf A_1:=\mathbf 1$ and use Proposition~\ref{prop2}.
\end{proof}	

\begin{corollary}\label{cor4}
	Let $\mathbf A_1=(A_1,\leq)$ and $\mathbf A_2=(A_2,\leq)$ be distributive posets with top and bottom element $a$, respectively. If $a$ is join-irreducible in $\mathbf A_1$ and meet-irreducible in $\mathbf A_2$ then
	\begin{align*}
		\big(P_a(\mathbf A_1+_a\mathbf A_2),\sqsubseteq\big) & \cong(\mathbf A_1\times\mathbf A_2^d)+_{(a,a)}(\mathbf A_2\times\mathbf A_1^d), \\
		\big(P_a ({\mathbf A}_1),\sqsubseteq\big) & \cong\mathbf A_1+_{(a,a)}\mathbf A_1^d, \\
		\big(P_a(\mathbf A_2),\sqsubseteq\big) & \cong\mathbf A_2^d+_{(a,a)}\mathbf A_2.
	\end{align*}
\end{corollary}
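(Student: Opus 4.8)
The plan is to derive Corollary~\ref{cor4} from Proposition~\ref{prop2} and Corollary~\ref{cor3} by specialising the Boolean poset $\mathbf B$. The key observation is that the one-element poset $\mathbf 1=(\{a\},\leq,{}')$ with the trivial involution $a'=a$ is a Boolean poset whose bottom and top elements coincide and equal $a$. With this choice $P_{ab}(\cdot)$ becomes $P_{\{a,a\}}(\cdot)=P_a(\cdot)$, and the ordinal sum $\mathbf A_1+_a\mathbf B+_b\mathbf A_2$ collapses to $\mathbf A_1+_a\mathbf A_2$ (identifying the singleton $B=\{a\}$ with both the top of $\mathbf A_1$ and the bottom of $\mathbf A_2$). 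Since $a=b$ here, the hypothesis of Proposition~\ref{prop2} requires exactly that $a$ be join-irreducible in $\mathbf A_1$ and meet-irreducible in $\mathbf A_2$ — which is precisely what is assumed in Corollary~\ref{cor4}.

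First I would verify that $\mathbf 1$ with $a'=a$ genuinely satisfies the definition of a Boolean poset: it is trivially distributive (all LU-identities hold on a one-element poset), and its unique element $a$ is its own complement since $L(a,a)=\{a\}=L(\{a\})$ and $U(a,a)=\{a\}=U(\{a\})$. Then applying Proposition~\ref{prop2} with $\mathbf B:=\mathbf 1$ and $a=b$ yields immediately that $\big(P_a(\mathbf A_1+_a\mathbf A_2),\sqsubseteq,{}'\big)$ is a Kleene poset and that it is order-isomorphic to $(\mathbf A_1\times\mathbf A_2^d)+_{(a,a)}\mathbf B+_{(a,a)}(\mathbf A_2\times\mathbf A_1^d)$; but the middle summand $\mathbf B$ is a single point whose element $(a,a)$ is already the identified top/bottom, so it disappears from the ordinal sum, giving the first displayed isomorphism. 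The second and third displayed isomorphisms follow the same way by additionally specialising $\mathbf A_2:=\mathbf 1$ or $\mathbf A_1:=\mathbf 1$ respectively — equivalently, by invoking Corollary~\ref{cor3} with $\mathbf B:=\mathbf 1$ — noting that for instance $\mathbf A_1\times\mathbf 1^d\cong\mathbf A_1$ and the middle summand again collapses.

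The only point that needs a little care — and the step I would expect to be the mildest obstacle — is checking that the irreducibility hypotheses survive these specialisations and that they are the correct ones. When $\mathbf A_2:=\mathbf 1$, its bottom element (which is also its unique element) is vacuously meet-irreducible, so the hypothesis "$a$ meet-irreducible in $\mathbf A_2$" is automatic and only "$a$ join-irreducible in $\mathbf A_1$" remains a genuine constraint, consistent with Corollary~\ref{cor3}; symmetrically for $\mathbf A_1:=\mathbf 1$. One should also confirm that when $a=b$ and $\mathbf B=\mathbf 1$ the set-theoretic description $P_a(\mathbf A_1+_a\mathbf A_2)=(A_1\times A_2)\cup\{(a,a)\}\cup(A_2\times A_1)$ from the proof of Proposition~\ref{prop2} is consistent — the middle piece $\{(x,x')\mid x\in B\}$ is just $\{(a,a)\}$, which indeed lies in both $A_1\times A_2$ and $A_2\times A_1$ only through the identified element, so no double counting occurs in the ordinal sum. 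Granting all of this, the corollary is an immediate specialisation and requires no further computation.
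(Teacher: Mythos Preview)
Your proposal is correct and matches the paper's own proof exactly: the paper simply says ``It is enough to put $\mathbf B:=\mathbf 1$ and use Proposition~\ref{prop2} and Corollary~\ref{cor3}.'' Your additional verifications (that $\mathbf 1$ is Boolean, that the irreducibility hypotheses behave correctly under specialisation, and that the middle summand collapses) are all sound and merely make explicit what the paper leaves to the reader.
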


\begin{proof}
	It is enough to put $\mathbf B:=\mathbf 1$ and use Proposition~\ref{prop2} and Corollary~\ref{cor3}. 
\end{proof}

For some of the Kleene posets described in Proposition~\ref{prop2}, we can construct the embeddings as follows:

\begin{corollary}
	Let $\mathbf A_1=(A_1,\leq)$ and $\mathbf A_2=(A_2,\leq)$ be distributive posets with top element $a$ and bottom element $b$, respectively, and $\mathbf B=(B,\leq,{}',a,b)$ a non-trivial bounded Boolean poset, put $\mathbf A:=\mathbf A_1+_a\mathbf B+_b\mathbf A_2$ and define 
	$f\colon A\rightarrow P_{ab}(\mathbf L)$ as follows:
	\[
	f(x):=\left\{
	\begin{array}{ll}
		(x,b)  & \text{if }x\leq a, \\
		(x,x') & \text{if }a\leq x\leq b, \\
		(x,a)  & \text{if }b\leq x
	\end{array}
	\right.
	\]
	{\rm(}$x,y\in A${\rm)}. Then $f$ is an embedding from $\mathbf A$ into $\big(P_{ab}(\mathbf A),\sqsubseteq\big)$, and $f(A)$ is a convex subset of $\big(P_{ab}(\mathbf L),\sqsubseteq\big)$.
\end{corollary}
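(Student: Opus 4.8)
The plan is to verify that the map $f$ is well-defined, that it is an $\LULU$-embedding, and that $f(A)$ is convex, by reducing everything to the already-established Proposition~\ref{prop2} together with Lemma~\ref{lem2} applied inside the Boolean block $\mathbf B$. First I would check well-definedness: for $x\le a$ we have $x\in A_1$, and since $a$ is the top of $\mathbf A_1$, $L(x,b)=L_{\mathbf A_1}(x)\le a\le b\le U(x,b)$, so $(x,b)\in P_{ab}(\mathbf A)$; dually for $b\le x$; and for $a\le x\le b$ the element $x$ lies in the Boolean poset $\mathbf B$, where the orthocomplementation gives $L(x,x')=\{a\}$ and $U(x,x')=\{b\}$, hence $(x,x')\in P_{ab}(\mathbf A)$. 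The three clauses agree on overlaps because $a'=b$ and $b'=a$, so $f$ is a genuine map into $P_{ab}(\mathbf A)$.

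Next I would identify the image. By the description in the proof of Proposition~\ref{prop2}, $P_{ab}(\mathbf A)=(A_1\times A_2)\cup\{(x,x')\mid x\in B\}\cup(A_2\times A_1)$, and $f$ maps $A_1$ into the first sheet $A_1\times\{b\}$ (note $f(x)=(x,b)$ with $b\in A_2$), maps $B$ bijectively onto the middle antichain-indexed part $\{(x,x')\mid x\in B\}$, and maps $A_2$ into $A_2\times\{a\}$. So $f$ is, up to the isomorphism of Proposition~\ref{prop2}, precisely the inclusion of the ordinal-sum skeleton $\mathbf A_1\hookrightarrow\mathbf A_1\times\mathbf A_2^d$ (via $x\mapsto(x,b)$), the identity on $\mathbf B$, and $\mathbf A_2\hookrightarrow\mathbf A_2\times\mathbf A_1^d$ (via $x\mapsto(x,a)$), assembled compatibly across the ordinal sum. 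Order-preservation and order-reflection then follow clause by clause: within each block it is immediate from the definitions of $\sqsubseteq$ and the ordinal sum, and across blocks it is forced because, e.g., $x\in A_1$ and $z\in B$ with $x\le z$ iff $x\le a$ iff $(x,b)\sqsubseteq(z,z')$ using $z'\le b$ and $a\le z$.

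For the $\LULU$-morphism property I would compute, for a finite non-empty $X\subseteq A$, $L(f(X))$ and $U(f(X))$ using the formula of Lemma~2 (the lemma giving $L(X)=\big(L(p_1(X))\times U(p_2(X))\big)\cap P_S(\mathbf A)$ for twist products), and compare with $L(f(UL(X)))$, $U(f(LU(X)))$. I expect the argument to split according to where $X$ meets the three blocks, exactly as in the three-block bookkeeping carried out in the proof of Theorem~\ref{th3}; the substantive identities needed are $U(X)=ULU(X)$-type collapses in $\mathbf A$ together with the fact that in the Boolean block the second coordinates $\{x'\mid x\in X\cap B\}$ have $L$ equal to either $L(a)$ or the image of a principal ideal, again mirroring the $LU(X)\cap F$ case analysis in Theorem~\ref{th3}. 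Since $\mathbf A_1$, $\mathbf A_2$ and $\mathbf B$ are distributive (the latter Boolean), the same manipulations go through, and order-reflection gives that $f$ is an $\LULU$-embedding.

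Finally, convexity of $f(A)$: if $f(c)\sqsubseteq(d,e)\sqsubseteq f(c')$ with $(d,e)\in P_{ab}(\mathbf A)$, I would argue by cases on which sheets $f(c)$ and $f(c')$ lie in. In each case the two bounding second coordinates pin $e$ into a sub-interval that forces $(d,e)$ to have the prescribed form: if both $c,c'\le a$ then $e=b$ and $(d,e)=f(d)$; if both $b\le c,c'$ then $e=a$; if $a\le c,c'\le b$ then $b=c'{}'\le e\le c'=a$... more carefully, $a\le c\le c'\le b$ in $\mathbf B$ gives $c'{}'\le e\le c'$ with $d$ between $c$ and $c'$, and since $\mathbf B$ is Boolean one checks $(d,e)$ must be $(d,d')$; the mixed cases are handled because any $(d,e)$ strictly between two sheets still has its coordinates trapped in $[a,b]$. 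The main obstacle I anticipate is purely the $\LULU$-morphism verification in the block-straddling cases — it is the same type of intricate but routine computation as in Theorem~\ref{th3}, and the cleanest route is to invoke the isomorphism of Proposition~\ref{prop2} and transport the known embeddings of $\mathbf A_1$, $\mathbf A_2$ into their respective twist products (Lemma~\ref{lem2} and its lattice analogue) across that isomorphism, checking only that they glue along the shared boundary elements $a$ and $b$.
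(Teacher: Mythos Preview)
Your plan is correct but considerably more laborious than the paper's route. For the embedding, the paper does not redo any $\LULU$-computation or transport along the isomorphism of Proposition~\ref{prop2}; it simply observes that the statement is a \emph{direct} special case of Theorem~\ref{th3}. All that is needed is to check the hypotheses of Theorem~\ref{th3} for $\mathbf A=\mathbf A_1+_a\mathbf B+_b\mathbf A_2$: distributivity follows from Lemma~\ref{osumdist}, the interval $[a,b]$ equals $B$ and carries the given orthocomplementation, and the technical condition ``$L(a)\subset L(U(x,a),b)\subset L(b)$ implies $L(U(x,a),b)=L(x)$'' is immediate in the ordinal sum (if $x\le a$ then $L(U(x,a),b)=L(a)$, if $b\le x$ then $L(U(x,a),b)=L(b)$, and if $x\in B$ then $L(U(x,a),b)=L(x)$). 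So the $\LULU$-embedding comes for free, and your anticipated ``main obstacle'' disappears.

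For convexity, the paper's argument is also shorter than your case analysis on both endpoints. The key observation is that \emph{every} element of $f(A)$ has second coordinate in $[a,b]$; hence if $f(c)=(c,d)\sqsubseteq(e,g)\sqsubseteq(h,i)=f(h)$ then $a\le i\le g\le d\le b$, so automatically $a\le g\le b$. Now one cases only on the first coordinate $e$: if $e\in B$ then $(e,g)\in P_{ab}(\mathbf A)$ with $g\in[a,b]$ forces $g=e'$, so $(e,g)=f(e)$; if $e<a$ then $g\ge b$ forces $g=b$; if $e>b$ then $g\le a$ forces $g=a$. Your version would also work, but the notation in your Boolean case (``$b=c'{}'\le e\le c'=a$'') is garbled and the split into six endpoint-pair cases is unnecessary.
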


\begin{proof}
	The first assertion is a special case of Theorem~\ref{th3}. We have
	\begin{align*}
		\mathbf A & =\mathbf A_1+_a\mathbf A+_2\mathbf A_2, \\
		P_{ab}(\mathbf A) & =(A_1\times A_2)\cup\{(x,x')\mid x\in B\}\cup(A_2\times A_1), \\
		f(A) & =(A_1\times\{b\})\cup\{(x,x')\mid x\in B\}\cup(A_2\times\{a\}).
	\end{align*}
	Now assume $(c,d),(h,i)\in f(A)$, $(e,g)\in P_{ab}(\mathbf A)$ and $(c,d)\sqsubseteq(e,g)\sqsubseteq(h,i)$. Then $c\leq e\leq h$ and $a\leq i\leq g\leq d\leq b$, and hence $a\leq g\leq b$. 
	If $e\in B$ then $f(e)=(e,e')=(e,g)$. Assume now that $e< a$. Then $g\geq b$, i.e., $g=b$. 
	We conclude that $f(e)=(e,b)=(e,g)$. Finally, suppose that $b< e$. Hence $g\leq a$, i.e., $g=a$ 
	and $f(e)=(e,a)=(e,g)$.
	% which shows that $(e,g)\in f(A)$. 
	Summing up,  $f(A)$ is a convex subset of $\big(P_{ab}(\mathbf A),\sqsubseteq\big)$.
\end{proof}

\section{Representable Kleene posets}

The following result shows  how to construct representable Kleene posets 
using the direct product of known representable Kleene posets.

\begin{theorem}\label{th4}
	%{\rm(\cite{CLP})} 
	Let $\mathbf A_i=(A_i,\leq)$ be a poset and $S_i$ a non-empty subset of $A_i$ for every $i\in I$. Put
	\[
	\mathbf A:=\prod_{i\in I}\mathbf A_i\text{ and }S:=\prod_{i\in I}S_i.
	\]
	Then 
	\[
	\mathbf P_S(\mathbf A)\cong\prod_{i\in I}\mathbf P_{S_i}(\mathbf A_i).
	\]
	Moreover, if  $\mathbf A_i$ is a distributive poset for every $i\in I$ then  $\mathbf A$ is  a distributive poset.
\end{theorem}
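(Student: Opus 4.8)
The plan is to construct an explicit isomorphism $\Phi\colon P_S(\mathbf A)\to\prod_{i\in I}P_{S_i}(\mathbf A_i)$ and check it is an order isomorphism compatible with the twist-product involution. The natural candidate is the ``rearrangement'' map: an element of $A^2=\big(\prod_i A_i\big)^2$ can be written as a pair $\big((x_i)_{i\in I},(y_i)_{i\in I}\big)$, and we send it to the family $\big((x_i,y_i)\big)_{i\in I}\in\prod_i A_i^2$. This is evidently a bijection $\big(\prod_i A_i\big)^2\to\prod_i A_i^2$, and it clearly intertwines the involutions, since $(x,y)'=(y,x)$ is computed coordinatewise on both sides. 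It is also an order isomorphism between the two twist-product orders: $((x_i),(y_i))\sqsubseteq((z_i),(u_i))$ means $(x_i)\leq(z_i)$ and $(u_i)\leq(y_i)$ in $\mathbf A$, i.e. $x_i\leq z_i$ and $u_i\leq y_i$ for all $i$, i.e. $(x_i,y_i)\sqsubseteq(z_i,u_i)$ for all $i$.

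The real content is that $\Phi$ restricts to a bijection between the subsets $P_S(\mathbf A)$ and $\prod_i P_{S_i}(\mathbf A_i)$. For this I would first record how $L$ and $U$ behave in a direct product of posets: for a single pair of elements $x=(x_i),y=(y_i)$ in $\mathbf A=\prod_i\mathbf A_i$ one has $L_{\mathbf A}(x,y)=\prod_i L_{\mathbf A_i}(x_i,y_i)$ and $U_{\mathbf A}(x,y)=\prod_i U_{\mathbf A_i}(x_i,y_i)$ (a lower bound of $\{x,y\}$ is exactly a family of coordinatewise lower bounds). Then, using the definition of the relation $B\leq C$ for subsets, the condition $L_{\mathbf A}(x,y)\leq S\leq U_{\mathbf A}(x,y)$ with $S=\prod_i S_i$ unwinds: $\prod_i L_{\mathbf A_i}(x_i,y_i)\leq\prod_i S_i$ holds iff for every choice of $u=(u_i)$ with $u_i\in L_{\mathbf A_i}(x_i,y_i)$ and every $s=(s_i)$ with $s_i\in S_i$ we have $u\leq s$, i.e. $u_i\leq s_i$ for all $i$; since the $S_i$ and the $L_{\mathbf A_i}(x_i,y_i)$ are all non-empty, this is equivalent to $L_{\mathbf A_i}(x_i,y_i)\leq S_i$ for every $i$, and dually for the $U$-side. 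Hence $(x,y)\in P_S(\mathbf A)$ iff $(x_i,y_i)\in P_{S_i}(\mathbf A_i)$ for all $i$, which is exactly $\Phi(x,y)\in\prod_i P_{S_i}(\mathbf A_i)$.

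Combining these, $\Phi$ is an involution-preserving order isomorphism of $\mathbf P_S(\mathbf A)$ onto $\prod_i\mathbf P_{S_i}(\mathbf A_i)$, proving the first claim. For the ``moreover'' part, I would appeal to the standard fact (which can be verified directly from the product formulas for $L$ and $U$ just established, applied to the LU-identity defining distributivity) that a direct product of distributive posets is a distributive poset: the $n$-ary LU-identity $L\big(U(x_1,\dots,x_n),z\big)\approx LU\big(L(x_1,z),\dots,L(x_n,z)\big)$ holds in $\mathbf A$ precisely because it holds in each factor $\mathbf A_i$ and both sides of the identity decompose as products over $i$ of the corresponding expressions in $\mathbf A_i$.

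\textbf{Main obstacle.} The only delicate point is the equivalence ``$\prod_i L_i\leq\prod_i S_i$ iff $L_i\leq S_i$ for all $i$'': the forward direction needs all $S_i$ (and all $L_i$) non-empty so that one can ``test'' one coordinate at a time by filling the other coordinates with arbitrary fixed elements — here non-emptiness of $S_i$ is given by hypothesis, and non-emptiness of $L_{\mathbf A_i}(x_i,y_i)$ is not needed in the direction that matters (if some $L_{\mathbf A_i}(x_i,y_i)$ is empty then both the product condition and the coordinatewise condition for that $i$ hold vacuously, since the product $\prod_i L_{\mathbf A_i}(x_i,y_i)$ is then empty and $\emptyset\leq S$ holds trivially). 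I would make sure the bookkeeping around these empty/non-empty cases is stated cleanly, but there is no genuine difficulty beyond that.
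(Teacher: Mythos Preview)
Your approach is exactly the paper's: the rearrangement map $((x_i),(y_i))\mapsto((x_i,y_i))_i$ (the paper writes its inverse), checked to be an order- and involution-preserving bijection carrying $P_S(\mathbf A)$ onto $\prod_i P_{S_i}(\mathbf A_i)$. You also correctly isolate the one nontrivial step, the equivalence $\prod_i L_{\mathbf A_i}(x_i,y_i)\leq\prod_i S_i\iff L_{\mathbf A_i}(x_i,y_i)\leq S_i$ for all $i$. But your resolution of the empty case is wrong. If $L_{\mathbf A_k}(x_k,y_k)=\emptyset$ for some single index $k$, then the product $\prod_i L_{\mathbf A_i}(x_i,y_i)$ is empty and the product inequality is vacuous --- but that says nothing about the coordinatewise inequality $L_{\mathbf A_j}(x_j,y_j)\leq S_j$ for $j\neq k$. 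Both directions of the equivalence are needed (one for $\Phi(P_S(\mathbf A))\subseteq\prod_i P_{S_i}(\mathbf A_i)$, one for the reverse inclusion), and the forward one genuinely fails here.

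Concretely: take $I=\{1,2\}$, let $\mathbf A_1=\{a,b\}$ be a two-element antichain with $S_1=\{a\}$, and $\mathbf A_2=\{c<d\}$ with $S_2=\{c\}$. Then $\big((a,d),(b,d)\big)\in P_S(\mathbf A)$ because $L_{\mathbf A}\big((a,d),(b,d)\big)=U_{\mathbf A}\big((a,d),(b,d)\big)=\emptyset$, yet $(d,d)\notin P_{S_2}(\mathbf A_2)$ since $d\not\leq c$. In fact $|P_S(\mathbf A)|=11$ while $|P_{S_1}(\mathbf A_1)|\cdot|P_{S_2}(\mathbf A_2)|=3\cdot3=9$, so no isomorphism exists at all. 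The paper's proof skates over the same point (its line ``Therefore $L_{\mathbf A_i}(x_i,y_i)\leq a_i\leq U_{\mathbf A_i}(x_i,y_i)$ for all $i$'' is precisely the unjustified step); the statement becomes correct under the mild extra hypothesis that every pair in each $\mathbf A_i$ has a common lower and a common upper bound (e.g.\ each $\mathbf A_i$ bounded), which all later applications in the paper satisfy.
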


\begin{proof}Recall that, for a non-empty subset $X\subseteq \prod_{i\in I} A_i$, 
	$L_{\mathbf A}(X)=\prod_{i\in I} L_{{\mathbf A}_i}\big(p_i(X)\big)$ and 
	$U_{\mathbf A}(X)=\prod_{i\in I} U_{{\mathbf A}_i}\big(p_i(X)\big)$.

	Let us show that the mapping $f$ from $\prod\limits_{i\in I}P_{S_i}(\mathbf A_i)$ to $P_S(\mathbf A)$ defined by
	\[
	f\big((x_i,y_i)_{i\in I}\big):=\big((x_i)_{i\in I},(y_i)_{i\in I}\big)
	\]
	for all $(x_i,y_i)_{i\in I}\in\prod\limits_{i\in I}P_{S_i}(\mathbf A_i)$ is an isomorphism from $\prod\limits_{i\in I}\mathbf P_{S_i}(\mathbf A_i)$ to $\mathbf P_S(\mathbf A)$. \\
	First, we have to check that $\big((x_i)_{i\in I},(y_i)_{i\in I}\big)\in P_S(\mathbf A)$. \\
	Evidently, $S\not=\emptyset$. Suppose $a=(a_i)_{i\in I}\in S$. Then $a_i\in S_i$ for every $i\in I$. Since $(x_i,y_i)\in P_{S_i}(\mathbf A_i)$ we have that 
	$L_{\mathbf A_i}(x_i, y_i)\leq a_i\leq U_{\mathbf A_i}(x_i, y_i)$. Hence 
	$L_{\mathbf A}\big((x_i)_{i\in I}, (y_i)_{i\in I}\big)\leq a\leq U_{\mathbf A}((x_i)_{i\in I}, (y_i)_{i\in I})$. \\
	Second, let us check that $f$ is an order embedding. Assume $(x_i,y_i)_{i\in I},(u_i,v_i)_{i\in I}\in\prod\limits_{i\in I}P_{S_i}(\mathbf A_i)$. Then the following are equivalent: $(x_i,y_i)_{i\in I}\sqsubseteq(u_i,v_i)_{i\in I}$ in $\prod\limits_{i\in I}\mathbf P_{S_i}(\mathbf A_i)$; $x_i\leq u_i$ and $v_i\leq y_i$ for every $i\in I$; $(x_i)_{i\in I}\leq(u_i)_{i\in I}$ and $(v_i)_{i\in I}\leq(y_i)_{i\in I}$ in $\mathbf A$; $\big((x_i)_{i\in I},(y_i)_{i\in I}\big)\sqsubseteq\big((u_i)_{i\in I},(v_i)_{i\in I}\big)$ in $\mathbf P_S(\mathbf A)$. \\
	Third, we have to verify that $f$ is surjective. Let $z\in P_S(\mathbf A)$. 
	Then $z=(x,y)\in A^2$ and $L_{\mathbf A}(x, y)\leq a\leq U_{\mathbf A}(x, y)$ 
	for all $a\in S$. We have $x=(x_i)_{i\in I}$ and $y=(y_i)_{i\in I}$ with 
	$x_i,y_i\in A_i$ for every $i\in I$. Let us check that $(x_j,y_j)\in P_{S_j}(\mathbf A_j)$ for every $j\in I$. Let $a_j\in S_j$ (this is possible since all $S_i$ are non-empty) and extend this to some $a:=(a_i)_{i\in I}\in S$. Then we have
	\[
	L_{\mathbf A}\big((x_i)_{i\in I}, (y_i)_{i\in I}\big)\leq a\leq U_{\mathbf A}\big((x_i)_{i\in I}, (y_i)_{i\in I}\big).
	\]
	Therefore $L_{\mathbf A_i}(x_i, y_i)\leq a_i\leq U_{\mathbf A_i}(x_i, y_i)$ for all $i\in I$, in particular for $i=j$. \\
	Fourth, let us show that $f$ preserves $'$. Assume that $(x_i,y_i)_{i\in I}\in\prod\limits_{i\in I} P_{S_i}(\mathbf A_i)$. Then $(y_i,x_i)_{i\in I}\in\prod\limits_{i\in I} P_{S_i}(\mathbf A_i)$ and $\big((x_i)_{i\in I},(y_i)_{i\in I}\big),\big((y_i)_{i\in I},(x_i)_{i\in I}\big)\in P_S(\mathbf A)$. We compute
	\begin{align*}
		f\Big(\big((x_i,y_i)_{i\in I}\big)'\Big) & =f\big((y_i,x_i)_{i\in I}\big)=\big((y_i)_{i\in I},(x_i)_{i\in I}\big)=\big((x_i)_{i\in I},(y_i)_{i\in I}\big)'= \\
		& =\Big(f\big((x_i,y_i)_{i\in I}\big)\Big)'.
	\end{align*}
	
	Suppose now that $\mathbf A_i$ is a distributive poset for every $i\in I$.
	Since the distributive law for  
	$\mathbf A$ can be checked componentwise,  $\mathbf A$ is  a distributive poset.
\end{proof}

\begin{lemma}\label{chaininv}
	Let  ${\mathbf C}$	 be a bounded chain with involution and $a\in C$ such that $a\leq a'$ 
	and $x\in [a,a']$ implies $x\in \{a,a'\}$. Then ${\mathbf C}$ is a 
	representable Kleene poset and ${\mathbf C}\cong {\mathbf P}_{[a,a']}([a, 1])$.
\end{lemma}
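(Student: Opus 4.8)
The plan is to analyse the structure of the bounded chain with involution $\mathbf C$ and show it factors as an ordinal sum, then recognize the middle piece as a (trivially) Boolean poset so that Corollary \ref{cor3} or Corollary \ref{cor4} applies. First I would set up notation: let $0$ and $1$ be the bottom and top of $\mathbf C$. The hypothesis says that the interval $[a,a']$ contains no element strictly between $a$ and $a'$; since $\mathbf C$ is a chain this means $[a,a']=\{a,a'\}$, so either $a=a'$ (the fixed point case) or $a\prec a'$ is a covering pair. Because $'$ is an antitone involution on a chain, it reverses the order exactly, so $'$ maps the down-set $[0,a]$ bijectively and order-reversingly onto the up-set $[a',1]$, and vice versa. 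Thus $\mathbf C$ is the ordinal sum $[0,a]\,{+}_a\,\{a,a'\}\,{+}_{a'}\,[a',1]$ (with the convention that when $a=a'$ the middle two-element piece collapses to the single shared point), and moreover $[a',1]$ is order-isomorphic, via $'$, to the dual of $[0,a]$.

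Next I would identify the generating poset. Take $\mathbf A := [a,1]$ as a subposet (a chain, hence distributive), which is itself the ordinal sum $\{a,a'\}\,{+}_{a'}\,[a',1]$ when $a\prec a'$, or just $[a,1]$ when $a=a'$. The claim is $\mathbf C\cong\mathbf P_{[a,a']}([a,1])$. By Lemma on $co(S)$ and Lemma (the ACC/DCC or $\bigwedge/\bigvee$ reduction), since $\bigwedge\{a,a'\}=a$ and $\bigvee\{a,a'\}=a'$ exist in $[a,1]$, we have $P_{[a,a']}([a,1])=P_{\{a,a'\}}([a,1])$, so it suffices to work with the two-element subset $\{a,a'\}$ (or the singleton $\{a\}$ when $a=a'$). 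In the fixed-point case $a=a'$ the result is immediate from Corollary \ref{cor4}: $[a,1]$ is a chain with bottom $a$ which is trivially meet-irreducible, so $\mathbf P_a([a,1])\cong [a,1]^d\,{+}_{(a,a)}\,[a,1]$, and this ordinal sum of a chain with its dual, glued at $a$, is exactly $\mathbf C$ since $'$ identifies $[0,a]$ with $[a,1]^d$. In the covering case $a\prec a'$, apply Corollary \ref{cor3} with $\mathbf B$ the two-element Boolean poset $\{a,a'\}$ and $\mathbf A_2:=[a',1]$: one gets $\mathbf P_{ab}(\mathbf B\,{+}_b\,\mathbf A_2)\cong\mathbf A_2^d\,{+}_{(a,a')}\,\mathbf B\,{+}_{(a',a)}\,\mathbf A_2$, which is $[a',1]^d\,{+}\,\{a,a'\}\,{+}\,[a',1]\cong\mathbf C$, again because $'$ witnesses $[0,a]\cong[a',1]^d$. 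One must check the side condition of the corollary: either $a\neq a'$ (which holds), or else join/meet-irreducibility — but since $a\neq a'$ here, nothing further is needed.

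The main obstacle I expect is bookkeeping rather than mathematics: making the two cases ($a=a'$ versus $a\prec a'$) uniform, and verifying carefully that the isomorphism $'\colon[0,a]\to[a',1]$ really exhibits $[0,a]$ as the dual of $[a',1]$ as an \emph{ordered} set (this is where the antitone-involution and chain hypotheses combine), so that the ordinal-sum decomposition coming out of Corollary \ref{cor3}/\ref{cor4} matches $\mathbf C$ on the nose, including the involution $'$. I would also need to confirm that the two-element poset $\{a,a'\}$ is indeed Boolean in the sense of the paper (distributive and complemented with $a'$ the complement of $a$), which is routine. Finally, representability itself is then free: $\mathbf C\cong\mathbf P_{[a,a']}([a,1])$ with $[a,1]$ a distributive poset and $[a,a']$ a non-empty subset is exactly the definition of a representable Kleene poset, and $\mathbf C$ is a Kleene poset because every bounded chain with antitone involution satisfies the normality/Zhu condition trivially (comparability of all elements).
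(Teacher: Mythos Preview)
Your proposal is correct and will go through, but it takes a different route from the paper's own proof. The paper argues directly: in each of the two cases ($a=a'$ and $a<a'$) it simply writes down an explicit order- and involution-preserving bijection $\mathbf C\to\mathbf P_{\{a,a'\}}([a,1])$, namely
\[
x\longmapsto\begin{cases}(a,x') & \text{if }x\le a,\\ (x,a) & \text{if }a<x,\end{cases}
\]
and checks by inspection that this lands in $P_{\{a,a'\}}([a,1])$ and is an isomorphism (quoting \cite[Lemma~18]{CLP} for the shape of $P_a([a,1])$ in the fixed-point case). No ordinal-sum machinery is invoked.

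Your approach instead recognises $[a,1]$ as $\mathbf B+_{a'}\mathbf A_2$ (or just $\mathbf A_2$ when $a=a'$) and appeals to Corollary~\ref{cor3}/\ref{cor4} to obtain the ordinal-sum description of $\mathbf P_{\{a,a'\}}([a,1])$, then matches that description with $\mathbf C$ via the antitone bijection $'\colon[0,a]\to[a',1]$. This is more structural and explains \emph{why} the isomorphism exists, at the cost of a little extra bookkeeping: you must still check that the poset isomorphism coming out of Proposition~\ref{prop2}/Corollary~\ref{cor3} intertwines the two involutions (the swap $(x,y)\mapsto(y,x)$ on the $P$-side and $'$ on $\mathbf C$), which is immediate from the explicit form of that isomorphism but is not stated in the corollaries. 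The paper's bare-hands map sidesteps this by being visibly involution-preserving. Either way the argument is short; your version has the advantage of illustrating that Lemma~\ref{chaininv} is really a special case of the ordinal-sum results.
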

\begin{proof}
	Assume first that $a=a'$. From \cite[Lemma 18]{CLP} we know that 
	$P_a(\mathbf [a, 1])=\big(\{a\}\times[a,1]\big)\cup\big([a,1]\times\{a\}\big)%
	\cong \mathbf C$. The isomorphism $f$ from $\mathbf C$ to ${\mathbf P}_a(\mathbf [a, 1])$ 
	is given by 
	\begin{align*}
		f(x)=\begin{cases}(a,x')&\text{ if } x\leq a,\\
			(x, a)&\text{ if } a<x.
			\end{cases}
	\end{align*}
Moreover $f(a)=(a,a)$. 

Suppose now that $a<a'$. Let us show that 
${\mathbf C}\cong {\mathbf P}_{\{a,a'\}}([a, 1])$. We define an isomorphism $g$ 
from $\mathbf C$ to ${\mathbf P}_{\{a,a'\}}([a, 1])$ as follows: 
\begin{align*}
	g(x)=\begin{cases}(a,x')&\text{ if } x\leq a,\\
		(x, a)&\text{ if } a<x.
	\end{cases}
\end{align*}
Moreover $g(a)=(a,a')$ and $g(a')=(a',a)$. 
\end{proof}

\begin{remark} \rm Let us denote by ${\mathcal R}{\mathcal C}$ the class of 
	 bounded chains with involution satisfying the assumption of Lemma 
	 \ref{chaininv}. Clearly, all finite chains are in 
	 ${\mathcal R}{\mathcal C}$, which was proved already in 
	 \cite[Corollary 21]{CLP}. Hence due to Theorem \ref{th4} and 
	 Lemma \ref{chaininv} direct products of chains from  ${\mathcal R}{\mathcal C}$ 
	 form a class of representable Kleene lattices.
\end{remark}

By Theorem \ref{th4}, a direct product of representable Kleene 
posets ${\ \mathbf K}_i$ is 
again representable, and the set $S$ for this product is just the 
direct product of the sets $S_i$ for ${\ \mathbf K}_i$. The natural question arises 
if a similar result  also holds for a subdirect product of representable Kleene 
lattices. We can show that, in particular cases, this is true. Let us consider the following example.

\begin{example} Let ${\ \mathbf K}_1$ be the Kleene lattice depicted in Fig.~1 and 
	${\ \mathbf K}_2$ the two-element chain considered as a Kleene lattice: 
	\vspace*{3mm}
	
	\begin{center}
		\setlength{\unitlength}{8mm}
		\begin{tabular}{c c c}
			\begin{picture}(2,4)
				\put(1,0){\circle*{.3}}
				\put(1,1){\circle*{.3}}
				\put(0,2){\circle*{.3}}
				\put(2,2){\circle*{.3}}
				\put(1,3){\circle*{.3}}
				\put(1,4){\circle*{.3}}
				\put(1,1){\line(-1,1)1}
				\put(1,1){\line(0,-1)1}
				\put(1,1){\line(1,1)1}
				\put(1,3){\line(-1,-1)1}
				\put(1,3){\line(1,-1)1}
				\put(1,3){\line(0,1)1}
				\put(1.35,-.3){$0_{{\mathbf K}_1}$}
				\put(1.35,.8){$y$}
				\put(-.6,1.8){$x$}
				\put(-2.399,1.8){${\ \mathbf K}_1=\ $}
				\put(2.35,1.8){$x'$}
				\put(1.35,2.8){$y'$}
				\put(1.35,4.1){$1_{{\mathbf K}_1}$}
				%	\put(.2,-1.4){{\rm Fig.~1}}
			\end{picture}&\phantom{xxxxxccccxxxxxxxxx}&
			\begin{picture}(2,3)
				%	\put(1,0){\circle*{.3}}
				%	\put(0,1){\circle*{.3}}
				%	\put(2,1){\circle*{.3}}
				\put(1,2){\circle*{.3}}
				\put(1,3){\circle*{.3}}
				%	\put(1,0){\line(-1,1)1}
				%	\put(1,0){\line(1,1)1}
				%	\put(1,2){\line(-1,-1)1}
				%	\put(1,2){\line(1,-1)1}
				\put(1,2){\line(0,1)1}
				%	\put(.85,-.6){$a$}
				%	\put(-.6,.8){$c$}
				%	\put(2.35,.8){$d$}
				\put(1.35,1.8){$0_{{\mathbf K}_2}$}
				\put(1.35,3.3){$1_{{\mathbf K}_2}$}
				\put(-1.1399,1.8){${\ \mathbf K}_2=\ $}
				%	\put(.2,-1.4){{\rm Fig.~2}}
			\end{picture}\\
			&&\\
			&{\rm Fig.~1}&
		\end{tabular}
	\end{center}
	
	Then ${\ \mathbf K}_1$ is representable by means of ${\ \mathbf L}_1$ and 
	${S}_1$ and, similarly, ${\ \mathbf K}_2$ is representable by means 
	of ${\ \mathbf L}_2$ and ${S}_2$ as shown in Fig.~2. 
	
	\vspace*{3mm}
	
	\begin{center}
		\setlength{\unitlength}{8mm}
		\begin{tabular}{c c c}
			\begin{picture}(2,4)
				%	\put(1,0){\circle*{.3}}
				\put(1,1){\circle*{.3}}
				\put(0,2){\circle*{.3}}
				\put(2,2){\circle*{.3}}
				\put(1,3){\circle*{.3}}
				\put(1,4){\circle*{.3}}
				\put(1,1){\line(-1,1)1}
				%	\put(1,1){\line(0,-1)1}
				\put(1,1){\line(1,1)1}
				\put(1,3){\line(-1,-1)1}
				\put(1,3){\line(1,-1)1}
				\put(1,3){\line(0,1)1}
				%		\put(1.35,-.3){$0$}
				\put(1.35,.8){$b$}
				\put(-.6,1.8){$a$}
				\put(-2.399,1.8){${\ \mathbf L}_1=\ $}
				\put(2.35,1.8){$a'$}
				\put(1.35,2.8){$b'$}
				\put(1.35,4.1){$1_{{\mathbf L}_1}$}
				%\put(.85,4.3){$1$}
				%	\put(.2,-1.4){{\rm Fig.~1}}
			\end{picture}&\phantom{xxcccxxx}&
			\begin{picture}(2,3)
				%	\put(1,0){\circle*{.3}}
				%	\put(0,1){\circle*{.3}}
				%	\put(2,1){\circle*{.3}}
				\put(1,2){\circle*{.3}}
				\put(1,3){\circle*{.3}}
				%	\put(1,0){\line(-1,1)1}
				%	\put(1,0){\line(1,1)1}
				%	\put(1,2){\line(-1,-1)1}
				%	\put(1,2){\line(1,-1)1}
				\put(1,2){\line(0,1)1}
				%	\put(.85,-.6){$a$}
				%	\put(-.6,.8){$c$}
				%	\put(2.35,.8){$d$}
				\put(1.35,1.8){$0_{{\mathbf L}_2}$}
				\put(1.35,3.3){$1_{{\mathbf L}_2}$}
				%	\put(1.35,1.8){$0$}
				%	\put(.85,3.3){$1$}
				\put(-1.1399,1.8){${\ \mathbf L}_2=\ $}
				%	\put(.2,-1.4){{\rm Fig.~2}}
			\end{picture}\\
			$\begin{array}{r l}
				\multicolumn{2}{c}{S_1=\{a, a', b, b'\}}\\[0.2cm]
				1_{{\mathbf K}_1} \mapsto (1_{\mathbf L_1},b),& y'\mapsto (b',b) \\
				x \mapsto (a,a'),& x'\mapsto (a',a) \\
				0_{{\mathbf K}_1} \mapsto (b,1_{{\mathbf L}_1})%
				,& y\mapsto (b,b') \\
			\end{array}$&&
			$\begin{array}{r l}
				\multicolumn{2}{c}{S_2=\{0_{{\mathbf L}_2}, 1_{{\mathbf L}_2}\}}\\[0.2cm]
				1_{{\mathbf K}_2} \mapsto (1_{{\mathbf L}_2},0_{{\mathbf L}_2}),%
				& 0_{{\mathbf K}_2} \mapsto (0_{{\mathbf L}_2},1_{{\mathbf L}_2})\\
				& \\
				&\\
			\end{array}$\\
			&{\rm Fig.~2}&
		\end{tabular}
	\end{center}
	
	Hence ${{\mathbf K}_1}\cong \mathbf P_{S_1}(\mathbf L_1)$ and 
	${{\mathbf K}_2}\cong \mathbf P_{S_2}(\mathbf L_2)$.

	Consider now the Kleene lattice  ${\mathbf K}={{\mathbf K}_1}\times {{\mathbf K}_2}$. 
	By Theorem \ref{th4} it is representable by means of 
	${\ \mathbf L}={\ \mathbf L}_1\times {{\mathbf L}_2}$ and 
	$S={S}_1\times {{S}_2}$, see Fig.~3. 
	
	\begin{figure}[htp]
		\centering
		\setlength{\unitlength}{9.1mm}
		\begin{tabular}{@{}l c}
			\begin{tabular}{@{}c@{}}				
				\begin{picture}(8,11)
					\put(5,1){\circle*{.3}}
					\put(7,3){\circle*{.3}}
					\put(5,3){\circle*{.3}}
					\put(3,5){\circle*{.3}}
					\put(5,5){\circle*{.3}}
					\put(7,5){\circle*{.3}}
					\put(3,7){\circle*{.3}}
					\put(5,7){\circle*{.3}}
					\put(7,7){\circle*{.3}}
					\put(3,9){\circle*{.3}}
					\put(5,9){\circle*{.3}}
					\put(5,11){\circle*{.3}}
					\put(5,3){\line(-1,1)2}
					\put(5,3){\line(1,1)2}
					\put(5,3){\line(0,1)2}
					\put(3,5){\line(0,1)4}
					\put(3,5){\line(1,1)2}
					\put(5,1){\line(1,1)2}
					\put(5,1){\line(0,1)2}
					\put(5,5){\line(-1,1)2}
					\put(5,5){\line(1,1)2}
					\put(7,3){\line(0,1)2}
					\put(7,5){\line(-1,1)2}
					\put(7,5){\line(0,1)2}
					\put(3,7){\line(1,1)2}
					\put(3,9){\line(1,1)2}
					\put(5,7){\line(0,1)4}
					\put(7,7){\line(-1,1)2}
					\put(5.25,0.85){$(0_{{\mathbf K}_1},0_{{\mathbf K}_2})$}
					\put(5.25,2.85){$(y,0_{{\mathbf K}_2})$}
					\put(0.9,4.85){$(x,0_{{\mathbf K}_2})$}
					\put(5.25,4.85){$(x',0_{{\mathbf K}_2})$}
					\put(7.4,2.85){$(0_{{\mathbf K}_1},1_{{\mathbf K}_2})$}
					\put(7.4,4.85){$(y,1_{{\mathbf K}_2})$}
					\put(0.9,6.85){$(y',0_{{\mathbf K}_2})$}
					\put(5.25,6.85){$(x,1_{{\mathbf K}_2})$}
					\put(7.4,6.85){$(x',1_{{\mathbf K}_2})$}
					\put(5.25,8.85){$(y',1_{{\mathbf K}_2})$}
					\put(0.9,8.85){$(1_{{\mathbf K}_1},0_{{\mathbf K}_2})$}
					%	\put(1.2,10.85){$(d,a)$}
					%	\put(3.225,10.85){$(a,0)$}
					\put(5.4,10.85){$(1_{{\mathbf K}_1},1_{{\mathbf K}_2})$}
					\put(3.4,10.85){${{\mathbf K}=}$}
				\end{picture}\\
				$
				\begin{array}{r@{\,\mapsto\,}l}
					(1_{{\mathbf K}_1},1_{{\mathbf K}_2})&%
					\big((1_{{\mathbf L}_1},1_{{\mathbf L}_2}),%
					(b,0_{{\mathbf L}_2})\big),\\[0.1cm]
					(0_{{\mathbf K}_1},0_{{\mathbf K}_2})&
					\big((b,0_{{\mathbf L}_2}), (1_{{\mathbf L}_1},1_{{\mathbf L}_2})\big),\\[0.1cm] 
					(1_{{\mathbf K}_1},0_{{\mathbf K}_2})&%
					\big((1_{{\mathbf L}_1},0_{{\mathbf L}_2}),%
					(b,1_{{\mathbf L}_2})\big),\\[0.1cm]
					(0_{{\mathbf K}_1},1_{{\mathbf K}_2})&%
					\big((b,1_{{\mathbf L}_2}), %
					(1_{{\mathbf L}_1},0_{{\mathbf L}_2})\big),\\[0.1cm]
					(y',1_{{\mathbf K}_2})&%
					\big((b',1_{{\mathbf L}_2}), 
					(b,0_{{\mathbf L}_2})\big),\\[0.1cm]
					(y,0_{{\mathbf K}_2})&%
					\big((b,0_{{\mathbf L}_2}), 
					(b',1_{{\mathbf L}_2})\big),\\[0.1cm]
					(y',0_{{\mathbf K}_2})&%
					\big((b',0_{{\mathbf L}_2}),%
					(b,1_{{\mathbf L}_2})\big),\\[0.1cm]
					(y,1_{{\mathbf K}_2})&%
					\big((b,1_{{\mathbf L}_2}),%
					(b',0_{{\mathbf L}_2})\big),\\[0.1cm]
					(x,1_{{\mathbf K}_2})&%
					\big((a,1_{{\mathbf L}_2}), 
					(a',0_{{\mathbf L}_2})\big),\\[0.1cm]
					(x',0_{{\mathbf K}_2})&%
					\big((a',0_{{\mathbf L}_2}), 
					(a,1_{{\mathbf L}_2})\big),\\[0.1cm]
					(x',1_{{\mathbf K}_2})&%
					\big((a',1_{{\mathbf L}_2}), 
					(a,0_{{\mathbf L}_2})\big),\\[0.1cm]
					(x,0_{{\mathbf K}_2})&%
					\big((a,0_{{\mathbf L}_2}), 
					(a',1_{{\mathbf L}_2})\big).\\
				\end{array}
				$
			\end{tabular}&%
			\begin{tabular}{@{}c@{}}				
				\begin{picture}(9,11)
					\put(5,3){\circle*{.3}}
					\put(3,5){\circle*{.3}}
					\put(5,5){\circle*{.3}}
					\put(7,5){\circle*{.3}}
					\put(3,7){\circle*{.3}}
					\put(5,7){\circle*{.3}}
					\put(7,7){\circle*{.3}}
					\put(3,9){\circle*{.3}}
					\put(5,9){\circle*{.3}}
					\put(5,11){\circle*{.3}}
					\put(5,3){\line(-1,1)2}
					\put(5,3){\line(1,1)2}
					\put(5,3){\line(0,1)2}
					\put(3,5){\line(0,1)4}
					\put(3,5){\line(1,1)2}
					\put(5,5){\line(-1,1)2}
					\put(5,5){\line(1,1)2}
					\put(7,5){\line(-1,1)2}
					\put(7,5){\line(0,1)2}
					\put(3,7){\line(1,1)2}
					\put(3,9){\line(1,1)2}
					\put(5,7){\line(0,1)4}
					\put(7,7){\line(-1,1)2}
					\put(5.4,2.85){$(b,0_{{\mathbf L}_2})$}
					\put(1.2,4.85){$(a,0_{{\mathbf L}_2})$}
					\put(5.25,4.85){$(a',0_{{\mathbf L}_2})$}
					\put(7.4,4.85){$(b,1_{{\mathbf L}_2})$}
					\put(1.2,6.85){$(b',0_{{\mathbf L}_2})$}
					\put(5.25,6.85){$(a,1_{{\mathbf L}_2})$}
					\put(7.4,6.85){$(a',1_{{\mathbf L}_2})$}
					\put(5.4,8.85){$(b',1_{{\mathbf L}_2})$}
					\put(0.90,8.85){$(1_{{\mathbf L}_1},0_{{\mathbf L}_2})$}
					\put(5.4,10.85){$(1_{{\mathbf L}_1},1_{{\mathbf L}_2})$}
					\put(3.4,10.85){${{\mathbf L}=}$}
				\end{picture}\\
				\begin{picture}(9,6)
					%	\put(3,1){\circle*{.3}}
					%	\put(3,3){\circle*{.3}}
					\put(5,1){\circle*{.3}}
					\put(3,3){\circle*{.3}}
					\put(5,3){\circle*{.3}}
					\put(7,3){\circle*{.3}}
					\put(3,5){\circle*{.3}}
					\put(5,5){\circle*{.3}}
					\put(7,5){\circle*{.3}}
					\put(5,7){\circle*{.3}}
					\put(5,1){\line(-1,1)2}
					\put(5,1){\line(1,1)2}
					\put(5,1){\line(0,1)2}
					\put(3,3){\line(0,1)2}
					\put(3,3){\line(1,1)2}
					\put(5,3){\line(-1,1)2}
					\put(5,3){\line(1,1)2}
					\put(7,3){\line(-1,1)2}
					\put(7,3){\line(0,1)2}
					\put(3,5){\line(1,1)2}
					\put(5,5){\line(0,1)2}
					\put(7,5){\line(-1,1)2}
					\put(5.4,0.85){$(b,0_{{\mathbf L}_2})$}
					\put(1.2,2.85){$(a,0_{{\mathbf L}_2})$}
					\put(5.25,2.85){$(a',0_{{\mathbf L}_2})$}
					\put(7.4,2.85){$(b,1_{{\mathbf L}_2})$}
					\put(1.2,4.85){$(b',0_{{\mathbf L}_2})$}
					\put(5.25,4.85){$(a,1_{{\mathbf L}_2})$}
					\put(7.4,4.85){$(a',1_{{\mathbf L}_2})$}
					\put(5.4,6.85){$(b',1_{{\mathbf L}_2})$}
					\put(3.4,6.85){${{\mathbf S}=}$}
				\end{picture}\\
				$
				\begin{array}{r@{}r@{}l}
					S&{=\{}&(a,0_{{\mathbf L}_2}), (a',0_{{\mathbf L}_2}), %
					(b,0_{{\mathbf L}_2}), (b',0_{{\mathbf L}_2}),\\
					&\phantom{=\{}&(a,1_{{\mathbf L}_2}), (a',1_{{\mathbf L}_2}), %
					(b,1_{{\mathbf L}_2}), (b',1_{{\mathbf L}_2})\}
				\end{array}
				$
			\end{tabular}\\[0.3cm]
			&\\
			\multicolumn{2}{c}{{\rm Fig.~3}}
			%	&{\rm Fig.~13}&%
		\end{tabular}
	\end{figure}
	
	Consider now two subdirect products of ${\mathbf K}$ which are Kleene lattices.
	\begin{enumerate}[{\rm1.)}]
		\item We start with ${{\mathbf K}^s}$, see Fig.~4. 
		
	%	\vspace*{2.95cm}
		
		\begin{figure}[htp]
			\centering
			\setlength{\unitlength}{8.987528mm}
			\begin{tabular}{@{}l@{}c}
				\begin{tabular}{@{}c@{}l@{}}
					\begin{picture}(8,8)
						\put(5,1){\circle*{.3}}
						%	\put(7,3){\circle*{.3}}
						\put(5,3){\circle*{.3}}
						\put(3,5){\circle*{.3}}
						\put(5,5){\circle*{.3}}
						\put(7,5){\circle*{.3}}
						\put(3,7){\circle*{.3}}
						\put(5,7){\circle*{.3}}
						\put(7,7){\circle*{.3}}
						%	\put(3,9){\circle*{.3}}
						\put(5,9){\circle*{.3}}
						\put(5,11){\circle*{.3}}
						\put(5,3){\line(-1,1)2}
						\put(5,3){\line(1,1)2}
						\put(5,3){\line(0,1)2}
						\put(3,5){\line(0,1)2}
						\put(3,5){\line(1,1)2}
						%	\put(5,1){\line(1,1)2}
						\put(5,1){\line(0,1)2}
						\put(5,5){\line(-1,1)2}
						\put(5,5){\line(1,1)2}
						%	\put(7,3){\line(0,1)2}
						\put(7,5){\line(-1,1)2}
						\put(7,5){\line(0,1)2}
						\put(3,7){\line(1,1)2}
						%	\put(3,9){\line(1,1)2}
						\put(5,7){\line(0,1)4}
						\put(7,7){\line(-1,1)2}
						\put(5.25,0.85){$(0_{{\mathbf K}_1},0_{{\mathbf K}_2})$}
						\put(5.25,2.85){$(y,0_{{\mathbf K}_2})$}
						\put(0.9,4.85){$(x,0_{{\mathbf K}_2})$}
						\put(5.25,4.85){$(x',0_{{\mathbf K}_2})$}
						%	\put(7.4,2.85){$(0_{{\mathbf K}_1},1_{{\mathbf K}_2})$}
						\put(7.4,4.85){$(y,1_{{\mathbf K}_2})$}
						\put(0.9,6.85){$(y',0_{{\mathbf K}_2})$}
						\put(5.25,6.85){$(x,1_{{\mathbf K}_2})$}
						\put(7.4,6.85){$(x',1_{{\mathbf K}_2})$}
						\put(5.25,8.85){$(y',1_{{\mathbf K}_2})$}
						%	\put(0.9,8.85){$(1_{{\mathbf K}_1},0_{{\mathbf K}_2})$}
						%	\put(1.2,10.85){$(d,a)$}
						%	\put(3.225,10.85){$(a,0)$}
						\put(5.4,10.85){$(1_{{\mathbf K}_1},1_{{\mathbf K}_2})$}
						\put(3.4,10.85){${{\mathbf K}^s=}$}
						\put(4.35,-.15){Fig.~4}
					\end{picture}&\phantom{xxx}\\
					\multicolumn{2}{c}{\phantom{xxxxxxx}}
				\end{tabular}&
				\begin{tabular}{c l}
					\begin{picture}(8,8)
						\put(5,3){\circle*{.3}}
						\put(3,5){\circle*{.3}}
						\put(5,5){\circle*{.3}}
						\put(7,5){\circle*{.3}}
						\put(3,7){\circle*{.3}}
						\put(5,7){\circle*{.3}}
						\put(7,7){\circle*{.3}}
						%	\put(3,9){\circle*{.3}}
						\put(5,9){\circle*{.3}}
						\put(5,11){\circle*{.3}}
						\put(5,3){\line(-1,1)2}
						\put(5,3){\line(1,1)2}
						\put(5,3){\line(0,1)2}
						\put(3,5){\line(0,1)2}
						\put(3,5){\line(1,1)2}
						\put(5,5){\line(-1,1)2}
						\put(5,5){\line(1,1)2}
						\put(7,5){\line(-1,1)2}
						\put(7,5){\line(0,1)2}
						\put(3,7){\line(1,1)2}
						%	\put(3,9){\line(1,1)2}
						\put(5,7){\line(0,1)4}
						\put(7,7){\line(-1,1)2}
						\put(5.4,2.85){$(b,0_{{\mathbf L}_2})$}
						\put(1.2,4.85){$(a,0_{{\mathbf L}_2})$}
						\put(5.25,4.85){$(a',0_{{\mathbf L}_2})$}
						\put(6.8,4.1985){$(b,1_{{\mathbf L}_2})$}
						\put(1.2,6.85){$(b',0_{{\mathbf L}_2})$}
						\put(5.25,6.85){$(a,1_{{\mathbf L}_2})$}
						\put(6.8,7.485){$(a',1_{{\mathbf L}_2})$}
						\put(5.4,8.85){$(b',1_{{\mathbf L}_2})$}
						%	\put(0.90,8.85){$(1_{{\mathbf L}_1},0_{{\mathbf L}_2})$}
						\put(5.4,10.85){$(1_{{\mathbf L}_1},1_{{\mathbf L}_2})$}
						\put(3.4,10.85){${{\mathbf L}^{s}=}$}
						\put(4.35,-.15){Fig.~5}
					\end{picture}&\phantom{xxx}\\
					\multicolumn{2}{c}{\phantom{xxxxxxx}}
					%\phantom{xx}\\[0.300135cm]
					%\multicolumn{2}{c}{Fig.~5}
				\end{tabular}
			\end{tabular}
		\end{figure}
		
	%	\vspace*{-5mm}
		
		Take $L^{s}:=(L_1\times L_2)\cap K^{s}$ and $S':=S\cap L^{s}$, see 
		Fig.~5. Then 
		$L^{s}=(L_1\times L_2) \setminus \{(1_{{\mathbf L}_1},0_{{\mathbf L}_2})\}$ and $S'=S$.  
		
		It is elementary to show that $\mathbf K^s\cong\mathbf P_{S'}(\mathbf L^s)$.  
		Thus it is representable.
		\item Now, let us consider the subdirect product ${{\mathbf K}^0}$ and put 
		$L^{0}:=(L_1\times L_2)\cap K^{0}$ and $S^{0}:=S\cap L^{0}$ as depicted in Fig.~6.
		
		\vskip0.3cm
		
		\begin{figure}[htp]
			\centering
			\setlength{\unitlength}{9.1mm}
			\begin{tabular}{c c c}
				&&\\
				\begin{picture}(3,4)
					\put(1,0){\circle*{.3}}
					\put(1,1){\circle*{.3}}
					\put(0,2){\circle*{.3}}
					\put(2,2){\circle*{.3}}
					\put(1,3){\circle*{.3}}
					\put(1,4){\circle*{.3}}
					\put(1,1){\line(-1,1)1}
					\put(1,1){\line(0,-1)1}
					\put(1,1){\line(1,1)1}
					\put(1,3){\line(-1,-1)1}
					\put(1,3){\line(1,-1)1}
					\put(1,3){\line(0,1)1}
					\put(1.37,-.3){$(0_{{\mathbf K}_1}, 0_{{\mathbf K}_2})$}
					\put(1.37,.8){$(y, 0_{{\mathbf K}_2})$}
					\put(-2.26,1.8){$(x, 0_{{\mathbf K}_2})$}
					\put(-1.3,3.1){${\ \mathbf K}^{0}=\ $}
					\put(2.37,1.8){$(x', 1_{{\mathbf K}_2})$}
					\put(1.37,2.8){$(y', 1_{{\mathbf K}_2})$}
					\put(1.37,4.1){$(1_{{\mathbf K}_1},1_{\mathbf K_2})$}
					%	\put(.2,-1.4){{\rm Fig.~1}}
				\end{picture}&\begin{tabular}{c c c}
					\phantom{xxxxxxxxxxxxx}
					&\begin{picture}(4,4)
						%	\put(1,0){\circle*{.3}}
						\put(1,1){\circle*{.3}}
						\put(0,2){\circle*{.3}}
						\put(2,2){\circle*{.3}}
						\put(1,3){\circle*{.3}}
						\put(1,4){\circle*{.3}}
						\put(1,1){\line(-1,1)1}
						%	\put(1,1){\line(0,-1)1}
						\put(1,1){\line(1,1)1}
						\put(1,3){\line(-1,-1)1}
						\put(1,3){\line(1,-1)1}
						\put(1,3){\line(0,1)1}
						%		\put(1.35,-.3){$0$}
						\put(1.37,.8){$(b, 0_{{\mathbf L}_2})$}
						\put(-1.9,1.8){$(a, 0_{{\mathbf L}_2})$}
						\put(-0.9099,3.8){${\ \mathbf L}^0=\ $}
						\put(2.35,1.8){$(a', 1_{{\mathbf L}_2})$}
						\put(1.37,2.8){$(b', 1_{{\mathbf L}_2})$}
						\put(1.37,4.1){$(1_{{\mathbf L}_1}, 1_{{\mathbf L}_2})$}
						%\put(.85,4.3){$1$}
						%	\put(.2,-1.4){{\rm Fig.~1}}
					\end{picture}&\phantom{xx}\\
					&\begin{picture}(4,4)
						%	\put(1,0){\circle*{.3}}
						\put(1,1){\circle*{.3}}
						\put(0,2){\circle*{.3}}
						\put(2,2){\circle*{.3}}
						\put(1,3){\circle*{.3}}
						%	\put(1,4){\circle*{.3}}
						\put(1,1){\line(-1,1)1}
						%	\put(1,1){\line(0,-1)1}
						\put(1,1){\line(1,1)1}
						\put(1,3){\line(-1,-1)1}
						\put(1,3){\line(1,-1)1}
						%	\put(1,3){\line(0,1)1}
						%		\put(1.35,-.3){$0$}
						\put(1.37,.8){$(b, 0_{{\mathbf L}_2})$}
						\put(-1.9,1.8){$(a, 0_{{\mathbf L}_2})$}
						\put(-0.9099,2.9){${\ \mathbf S}^0=\ $}
						\put(2.35,1.8){$(a', 1_{{\mathbf L}_2})$}
						\put(1.37,2.8){$(b', 1_{{\mathbf L}_2})$}
						%		\put(1.37,4.1){$(1_{{\mathbf L}_1}, 1_{{\mathbf L}_2})$}
						%\put(.85,4.3){$1$}
						%	\put(.2,-1.4){{\rm Fig.~1}}
					\end{picture}&\end{tabular}&
				\\
				&&\\
				&{\rm Fig.~6}\phantom{xxxxxxxxxxxxxxx}&
			\end{tabular}
		\end{figure}
		Then $\mathbf K^0\cong\mathbf P_{S^0}(\mathbf L^0)$, and 
		${{\mathbf K}^0}$ is representable.
	\end{enumerate}
\end{example}

Note that not every subdirect product of two representable Kleene posets need be representable. On the one hand, if $\mathbf K=(K,\vee,\wedge,{}')$ is a Kleene poset  
that is isomorphic to some $\mathbf P_S(\mathbf A)$, then $\mathbf A$ may not be embeddable into $\mathbf K$ and thus $S$ may not be considered 
as a subset of $K$. On the other hand, every finite distributive lattice is a subdirect product of finite chains and every  finite chain is representable, but  non-representable Kleene lattices exist (see Theorem~\ref{th1}).

\begin{lemma}\label{lem1}
	Let $\mathbf A=(A,\leq)$ be a finite poset and $S$ a non-empty subset of $A$. Then $|P_S(\mathbf A)|$ is odd if and only if $|S|=1$.
\end{lemma}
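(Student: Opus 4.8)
The plan is to use the antitone involution ${}'$ on $\mathbf P_S(\mathbf A)$ given by $(x,y)'=(y,x)$ as a parity-counting device. Since $\mathbf A$ is finite, $P_S(\mathbf A)\subseteq A^2$ is finite, and ${}'$ partitions $P_S(\mathbf A)$ into orbits of size $1$ (the fixed points of ${}'$) and orbits of size $2$. Hence $|P_S(\mathbf A)|$ is congruent modulo $2$ to the number of fixed points of ${}'$, and so $|P_S(\mathbf A)|$ is odd if and only if ${}'$ has an odd number of fixed points.

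Next I would identify the fixed points. A pair $(x,y)\in P_S(\mathbf A)$ is fixed by ${}'$ precisely when $x=y$, so the task reduces to determining for which $x\in A$ we have $(x,x)\in P_S(\mathbf A)$, that is, $L(x)=L(x,x)\leq S\leq U(x,x)=U(x)$. Since $x\in L(x)\cap U(x)$, the condition $L(x)\leq S$ forces $x\leq s$ for every $s\in S$, while $S\leq U(x)$ forces $s\leq x$ for every $s\in S$; together these yield $s=x$ for all $s\in S$, i.e.\ $S=\{x\}$. Conversely, if $S=\{a\}$ then $(a,a)\in P_a(\mathbf A)$, because $L(a)\leq a$ and $a\leq U(a)$ hold trivially. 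Therefore ${}'$ has exactly one fixed point when $|S|=1$ (namely $(a,a)$) and no fixed point when $|S|\geq 2$.

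Combining the two observations completes the argument: if $|S|=1$ the number of fixed points of ${}'$ is $1$, so $|P_S(\mathbf A)|$ is odd; if $|S|\geq 2$ the number of fixed points is $0$, so $|P_S(\mathbf A)|$ is even (indeed possibly $0$, which is consistent, as $P_S(\mathbf A)$ may then be empty). This proves the equivalence.

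I do not anticipate a real obstacle. The only step requiring a little care is the description of the fixed points, where one must use that $x$ itself lies in both $L(x)$ and $U(x)$ in order to squeeze $S$ down to the singleton $\{x\}$; the rest is the standard orbit-counting parity argument for an involution acting on a finite set.
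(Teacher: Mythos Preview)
Your argument is correct and is essentially the same as the paper's: the paper also uses that $(b,c)\in P_S(\mathbf A)$ iff $(c,b)\in P_S(\mathbf A)$ (your swap involution) and that a diagonal pair $(b,b)$ lies in $P_S(\mathbf A)$ iff $S=\{b\}$. The paper's proof is just terser, leaving the orbit-counting parity conclusion implicit.
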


\begin{proof}
	Let $a,b,c\in A$. Then $(b,c)\in P_S(\mathbf A)$ if and only if $(c,b)\in P_S(\mathbf A)$. If $|S|=1$, say $S=\{a\}$, then $(b,b)\in P_S(\mathbf A)$ if and only if $b=a$. Otherwise, $(b,b)\notin P_S(\mathbf A)$.
\end{proof}

The following three lemmas will be helpful to determine a large class of representable, respectively non-representable, Kleene posets.

\begin{lemma}\label{lem3}
	Let $\mathbf A=(A,\leq)$ be a poset and $a\in A$ and assume $\big(P_a(\mathbf A),\sqsubseteq)$ to have no three-element antichain. Then $a$ is comparable with every element of $A$ and join- and meet-irreducible.
\end{lemma}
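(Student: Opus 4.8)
The plan is to prove the contrapositive of each of the three conclusions separately: assuming one of them fails in $\mathbf A$, I would exhibit a three-element antichain in $\big(P_a(\mathbf A),\sqsubseteq\big)$. Two trivial facts are used throughout: $(a,a)\in P_a(\mathbf A)$ (since $L(a)\leq a\leq U(a)$), and for $u,v\in A$ the membership $(u,v)\in P_a(\mathbf A)$ holds precisely when $L(u,v)\leq a\leq U(u,v)$, a condition symmetric in $u$ and $v$, so that $(u,v)\in P_a(\mathbf A)$ iff $(v,u)\in P_a(\mathbf A)$. In each case, checking that three given pairs are pairwise $\sqsubseteq$-incomparable is an immediate unravelling of ``$(p,q)\sqsubseteq(r,s)$ iff $p\leq r$ and $s\leq q$'', and their distinctness is clear.

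For comparability, suppose some $b\in A$ is incomparable with $a$; then $b\neq a$. Since $a$ occurs as a coordinate, all three pairs $(a,a),(a,b),(b,a)$ lie in $P_a(\mathbf A)$ (the common lower bounds of their coordinates are $\leq a$ and the common upper bounds are $\geq a$). Every $\sqsubseteq$-comparison among these three pairs reduces to ``$a\leq b$'' or ``$b\leq a$'', both false, so $\{(a,a),(a,b),(b,a)\}$ is a three-element antichain, contradicting the hypothesis. Hence $a$ is comparable with every element of $A$.

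For join-irreducibility, suppose $a$ is not join-irreducible; concretely this yields $x,y\in A$ with $\bigvee\{x,y\}=a$ and $a\notin\{x,y\}$, whence $x,y<a$ (as $a$ is an upper bound of $\{x,y\}$ distinct from both) and $x,y$ are incomparable (otherwise the larger one would be the supremum). Then $(x,y),(y,x)\in P_a(\mathbf A)$: every common lower bound of $x,y$ is $\leq x<a$, so $L(x,y)\leq a$, while $\bigvee\{x,y\}=a$ says exactly that $a\leq U(x,y)$. The set $\{(a,a),(x,y),(y,x)\}$ is then a three-element antichain — comparisons with $(a,a)$ fail because $a\not\leq x$ and $a\not\leq y$, and $(x,y),(y,x)$ are incomparable because $x$ and $y$ are — a contradiction. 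Meet-irreducibility follows by duality: from the definitions, $P_a(\mathbf A^d)$ equals $P_a(\mathbf A)^d$ as a poset, ``having no three-element antichain'' is self-dual, and $a$ is meet-irreducible in $\mathbf A$ iff it is join-irreducible in $\mathbf A^d$; alternatively one repeats the previous argument with incomparable $x,y>a$ satisfying $\bigwedge\{x,y\}=a$.

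The only point needing real care is the meaning of ``join-irreducible'' for a poset: what the argument actually uses is that its failure delivers \emph{two} incomparable elements $x,y<a$ with $\bigvee\{x,y\}=a$ (equivalently $U(x,y)=U(a)$). Under the binary formulation ($a=\bigvee\{x,y\}\Rightarrow a\in\{x,y\}$) this is immediate. Under the ``no finite subset'' formulation it follows by a short reduction which uses the comparability already established: a witnessing finite set lies below $a$; passing to a minimal such set $S$ and picking $s\in S$ together with an upper bound $t<a$ of $S\setminus\{s\}$, one checks that $s$ and $t$ are incomparable and $\bigvee\{s,t\}=a$. That reduction is the one non-mechanical step; everything else is a direct verification of the order $\sqsubseteq$.
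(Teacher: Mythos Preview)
Your proof is correct and follows exactly the same route as the paper: for each of the three properties you argue by contrapositive, exhibiting respectively the antichains $\{(a,a),(a,b),(b,a)\}$, $\{(a,a),(x,y),(y,x)\}$ with $x\vee y=a$, and the dual one with $x\wedge y=a$. The paper's proof is terser---it does not spell out why the pairs lie in $P_a(\mathbf A)$ or why they are pairwise incomparable, and it tacitly takes the binary formulation of join-irreducibility---so your added verifications and your remark on the ``no finite subset'' reading are extra care rather than a different approach.
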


\begin{proof} 
	If there would exist some element $b$ of $A$ with $b\parallel a$ then $\{(a,a),(a,b),(b,a)\}$ would be a three-element antichain of $\big(P_a(\mathbf A),\sqsubseteq)$. If $a$ would not be join-irreducible then there would exist $c,d\in A\setminus\{a\}$ with $c\vee d=a$ and $\{(a,a),(c,d),(d,c)\}$ would be a three-element antichain of $\big(P_a(\mathbf A),\sqsubseteq)$. If, finally, $a$ would not be meet-irreducible then there would exist $e,f\in A\setminus\{a\}$ with $e\wedge f=a$ and $\{(a,a),(e,f),(f,e)\}$ would be a three-element antichain of $\big(P_a(\mathbf A),\sqsubseteq)$.
\end{proof}

\begin{lemma}\label{lem5}
	Let $\mathbf K=(K,\leq,{}')$ be a finite pseudo-Kleene poset, $\mathbf A=(A,\leq)$ a poset and $S$ a non-empty subset of $A$. Then
	\begin{enumerate}[{\rm(i)}]
		\item the antitone involution $'$ on $K$ has at most one fixed point,
		\item the antitone involution $'$ on $K$ has a fixed point if and only if $|K|$ is odd,
		\item the antitone involution $'$ on $P_S(\mathbf A)$ has a fixed point if and only if $|S|=1$.
	\end{enumerate}
\end{lemma}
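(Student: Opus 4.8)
The three assertions are closely linked, so I would prove (i) first, deduce (ii) from it together with a simple counting/pairing argument, and then obtain (iii) as a special case of (ii) via Lemma~\ref{lem1}. For (i): suppose $x$ and $y$ are both fixed points of $'$, i.e.\ $x=x'$ and $y=y'$. Then the normality condition~(\ref{equ1}) gives $L(x)=L(x,x')\leq U(y,y')=U(y)$, which forces $x\leq y$. By symmetry $y\leq x$, hence $x=y$. This uses only the normality condition and not finiteness, so in fact (i) holds for arbitrary pseudo-Kleene posets; I would remark on this.

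For (ii): the map $'$ is an involution on the finite set $K$, so it partitions $K$ into singletons (the fixed points) and two-element orbits $\{z,z'\}$ with $z\neq z'$. Hence $|K|$ has the same parity as the number of fixed points. By part~(i) there is at most one fixed point, so the number of fixed points is $0$ or $1$; therefore $|K|$ is odd precisely when that number is $1$, i.e.\ precisely when a fixed point exists. This is the direction of the argument I expect to be essentially mechanical.

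For (iii): apply (ii) to the finite pseudo-Kleene poset $\mathbf P_S(\mathbf A)$ (which is a pseudo-Kleene poset by Lemma, and is finite when $\mathbf A$ is finite --- note the statement tacitly assumes $\mathbf A$ finite, inherited from the standing hypothesis on $\mathbf K$; I would make this explicit). Thus $'$ on $P_S(\mathbf A)$ has a fixed point iff $|P_S(\mathbf A)|$ is odd, and by Lemma~\ref{lem1} the latter holds iff $|S|=1$. Alternatively, and more directly, a fixed point of $'$ on $P_S(\mathbf A)$ is exactly a pair $(x,y)$ with $(x,y)=(y,x)$, i.e.\ $(x,x)\in P_S(\mathbf A)$; by definition of $P_S(\mathbf A)$ this means $L(x)\leq S\leq U(x)$, i.e.\ $S\subseteq U(x)\cap L(x)$, which is possible for some $x$ exactly when $S$ has a largest-and-smallest element simultaneously, forcing $|S|=1$ and $S=\{x\}$; conversely if $S=\{a\}$ then $(a,a)$ is such a fixed point. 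I would present this second, self-contained argument as it avoids invoking the parity lemma.

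\textbf{Main obstacle.} There is no serious obstacle here; the only subtlety is bookkeeping about which finiteness hypotheses are actually needed. In particular I would be careful to state that (i) needs no finiteness, that (ii) and (iii) do, and that the reference to $\mathbf A$ in (iii) carries an implicit finiteness assumption consistent with $\mathbf P_S(\mathbf A)$ being finite; otherwise (iii) should be phrased for finite $\mathbf A$.
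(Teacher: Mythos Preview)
Your proposal is correct and follows essentially the same route as the paper: part~(i) is proved via the normality condition exactly as you describe, part~(ii) by the orbit/pairing argument for the involution~$'$, and part~(iii) by your second, direct argument (a fixed point of~$'$ on $P_S(\mathbf A)$ must have the form $(d,d)$ with $L(d)\leq S\leq U(d)$, whence $S=\{d\}$; conversely $(a,a)$ is fixed when $S=\{a\}$). One small correction to your bookkeeping: part~(iii) does \emph{not} require finiteness of $\mathbf A$ --- the direct argument you prefer (and which the paper uses) works for arbitrary posets $\mathbf A$, and indeed the lemma imposes no finiteness hypothesis on $\mathbf A$; only your alternative route via Lemma~\ref{lem1} and part~(ii) would need it.
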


\begin{proof}
	Let $a,b\in K$ and $c,d,e\in A$.
	\begin{enumerate}[(i)]
		\item If $a'=a$ and $b'=b$ then $L(a) = L(a,a')\leq U(b,b')=U(b)$ 
		and $ L(b)= L(b,b')\leq U(a,a')= U(a)$ and hence $a=b$.
		\item The set
		\[
		\bigcup_{x\in K}\{x,x'\}^2
		\]
		is an equivalence relation on $K$ having only two-element classes if $'$ on $K$ has no fixed point and having only two-element classes, but precisely 
		one one-element class otherwise.
		\item We have $(c,c)\in P_c(\mathbf A)$ and $(c,c)'=(c,c)$ in $P_c(\mathbf A)$. Now assume $|S|>1$. If $(d,e)$ would be a fixed point of $'$ in $P_S(\mathbf A)$ then $d=e$ and we would have $L(d)=L(d,d)\leq S\leq U(d,d)= U(d)$ and hence $d\leq s\leq d$, i.e., $d=s$ for all $s\in S$ contradicting $|S|>1$. Hence $'$ has no fixed point in $P_S(\mathbf A)$.
	\end{enumerate} 
\end{proof}

 In the following lemma, we derive an upper bound for $|A|$ provided $P_a(\mathbf A)$ is finite. This result will be used in the following theorem describing representable Kleene posets of odd cardinality.

\begin{lemma}\label{lem4}
	Let $\mathbf A=(A,\leq)$ be a poset and $a\in A$ and assume 
	$P_a(\mathbf A)$ to be finite. Then $A$ is finite and $|A|<|P_a(\mathbf A)|/2+1$.
\end{lemma}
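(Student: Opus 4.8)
The plan is to exploit the embedding $f\colon A\to P_a(\mathbf A)$, $f(x)=(x,a)$, from Lemma~\ref{lem2}, which realizes $\mathbf A$ as a convex subposet of $\big(P_a(\mathbf A),\sqsubseteq\big)$. Since $P_a(\mathbf A)$ is finite, its subset $f(A)$ is finite, so $A$ is finite; this already handles the first assertion. For the cardinality bound, the idea is that the involution $'$ on $P_a(\mathbf A)$ pairs up $(x,y)$ with $(y,x)$, and the copy $f(A)=A\times\{a\}$ together with its mirror image $\{a\}\times A$ (which is $f(A)'$) almost exhausts $P_a(\mathbf A)$, overlapping only in the single fixed point $(a,a)$.

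Concretely, I would first observe $f(A)=A\times\{a\}\subseteq P_a(\mathbf A)$ and, applying $'$, also $f(A)'=\{a\}\times A\subseteq P_a(\mathbf A)$, with $f(A)\cap f(A)'=\{(a,a)\}$ because $(x,a)=(a,y)$ forces $x=y=a$. Hence
\[
|f(A)\cup f(A)'| = |f(A)| + |f(A)'| - 1 = 2|A| - 1,
\]
using $|f(A)|=|f(A)'|=|A|$ (the map $f$ is injective by Lemma~\ref{lem2}, and $'$ is a bijection). Therefore $2|A|-1 \le |P_a(\mathbf A)|$, i.e.\ $|A|\le \big(|P_a(\mathbf A)|+1\big)/2$. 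This gives the weak inequality; to get the strict one $|A| < |P_a(\mathbf A)|/2+1$, I need to exhibit at least one element of $P_a(\mathbf A)$ lying outside $f(A)\cup f(A)'$, OR argue that equality $2|A|-1=|P_a(\mathbf A)|$ cannot happen. In fact the strict inequality is equivalent to $2|A|-1 < |P_a(\mathbf A)|$ only when... wait — note $|A|<|P_a(\mathbf A)|/2+1$ is the same as $2|A| < |P_a(\mathbf A)|+2$, i.e.\ $2|A|-1 \le |P_a(\mathbf A)|$, which is exactly the weak bound already obtained (since all quantities are integers). So in fact no extra argument is needed: $2|A|-1\le |P_a(\mathbf A)|$ rearranges directly to $|A| < |P_a(\mathbf A)|/2 + 1$.

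The only genuinely load-bearing facts are: (a) $f$ is injective, hence $|f(A)|=|A|$ — immediate from Lemma~\ref{lem2}, where $f$ is shown to be an order isomorphism onto $f(A)$; (b) $f(A)'=\{a\}\times A$ and $f(A)\cap f(A)' = \{(a,a)\}$ — a one-line check; and (c) $(a,a)\in P_a(\mathbf A)$, which holds since $L(a,a)=L(a)\le a\le U(a)=U(a,a)$. I anticipate no real obstacle here; the subtlety is purely bookkeeping, namely recognizing that the claimed strict inequality $|A|<|P_a(\mathbf A)|/2+1$ is, for integers, literally equivalent to the clean bound $|P_a(\mathbf A)|\ge 2|A|-1$ that the two embedded copies of $A$ furnish. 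One should double-check that $f(A)$ and $f(A)'$ are the only pieces being counted and that nothing is double-counted beyond $(a,a)$, but this follows at once from the product structure of $A^2$.
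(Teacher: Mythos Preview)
Your proposal is correct and follows essentially the same route as the paper: both arguments observe that $(\{a\}\times A)\cup(A\times\{a\})\subseteq P_a(\mathbf A)$ with intersection $\{(a,a)\}$, yielding $|P_a(\mathbf A)|\ge 2|A|-1$, and then rearrange. Your invocation of Lemma~\ref{lem2} and the involution $'$ is a slightly more elaborate packaging of the same inclusion, and your digression about strict versus weak inequality lands exactly where the paper does, namely $|A|\le(|P_a(\mathbf A)|+1)/2<|P_a(\mathbf A)|/2+1$.
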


\begin{proof}
	Since $P_a(\mathbf A)\supseteq(\{a\}\times A)\cup(A\times\{a\})$ we have that $A$ 
	is finite and $|P_a(\mathbf A)|\geq2|A|-1$ whence $|A|\leq(|P_a(\mathbf A)|+1)/2<|P_a(\mathbf A)|/2+1$.
\end{proof}

\begin{theorem}\label{th2}
	Let $\mathbf K=(K,\leq,{}')$ be a finite representable Kleene poset with an odd number of elements. Then $'$ has exactly one fixed point $a$, and there exists some subposet $\mathbf A$ of $(K,\leq)$ of cardinality less than $|K|/2+1$ containing $a$ such that $\mathbf P_a(\mathbf A)\cong\mathbf K$.
\end{theorem}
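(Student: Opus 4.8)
The plan is to start from the hypothesis that $\mathbf K$ is representable, say $\mathbf K\cong\mathbf P_S(\mathbf A')$ for some distributive poset $\mathbf A'$ and a non-empty $S\subseteq A'$, and then to extract from this a representation over a \emph{singleton}. Since $|K|$ is odd, Lemma~\ref{lem5}(iii) forces $|S|=1$, say $S=\{a'\}$, so in fact $\mathbf K\cong\mathbf P_{a'}(\mathbf A')$ already. By Lemma~\ref{lem4} applied to $\mathbf A'$ (whose $P_{a'}$ is finite, being isomorphic to $K$) we get that $\mathbf A'$ is finite with $|A'|<|K|/2+1$. Transporting the isomorphism, $'$ on $\mathbf K$ corresponds to the switch involution on $\mathbf P_{a'}(\mathbf A')$, whose unique fixed point is $(a',a')$ (Lemma~\ref{lem5}(i),(iii)); call $a\in K$ its preimage, so $a'=a$ under the identification and $a$ is the unique fixed point of $'$ in $\mathbf K$. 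Also Lemma~\ref{lem5}(ii) independently confirms that a fixed point exists because $|K|$ is odd, and (i) that it is unique.

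The remaining task is to realize $\mathbf A'$ \emph{inside} $(K,\leq)$ as a subposet $\mathbf A$ containing $a$ with $\mathbf P_a(\mathbf A)\cong\mathbf K$. Here Lemma~\ref{lem2} does the work: applied to the poset $\mathbf A'$ and its element $a'$, it gives an order isomorphism $f\colon(A',\leq)\to(f(A'),\sqsubseteq)$ onto a convex subset of $\big(P_{a'}(\mathbf A'),\sqsubseteq\big)$, with $f(x)=(x,a')$, and moreover $f$ is an \LULU-embedding. Composing $f$ with the isomorphism $\mathbf P_{a'}(\mathbf A')\cong\mathbf K$, we obtain an \LULU-embedding $g\colon\mathbf A'\to(K,\leq)$ sending $a'$ to the fixed point $a$. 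Put $\mathbf A:=g(\mathbf A')$, a subposet of $(K,\leq)$ of the same cardinality as $A'$, hence $|A|<|K|/2+1$, and with $a\in A$. Since $g$ is an order isomorphism $\mathbf A'\cong\mathbf A$, it induces an isomorphism $\mathbf P_{a'}(\mathbf A')\cong\mathbf P_a(\mathbf A)$, and chaining with $\mathbf P_{a'}(\mathbf A')\cong\mathbf K$ yields $\mathbf P_a(\mathbf A)\cong\mathbf K$, as required.

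The main point to be careful about — and the step I expect to need the most attention — is the functoriality claim that an order isomorphism $\mathbf A'\cong\mathbf A$ of posets induces an isomorphism $\mathbf P_{a'}(\mathbf A')\cong\mathbf P_a(\mathbf A)$ of the corresponding pseudo-Kleene posets. This is where one must check that the defining condition $L(x,y)\leq S\leq U(x,y)$ is preserved, i.e., that an order isomorphism carries $L$ and $U$ of pairs to $L$ and $U$ of the images, so that a pair lies in $P_{a'}(\mathbf A')$ iff its image lies in $P_a(\mathbf A)$; and that the componentwise action respects $\sqsubseteq$ and the switch involution $'$. All of this is routine once one notes that $L,U$ are defined purely from $\leq$, so an order isomorphism commutes with them; the distributivity of $\mathbf A$ is inherited from $\mathbf A'$ for the same reason (it is an LU-identity), so $\mathbf P_a(\mathbf A)$ is indeed a Kleene poset and the isomorphism with $\mathbf K$ is legitimate. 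One should also double-check the bound: $|A|=|A'|$ and Lemma~\ref{lem4} gives $|A'|<|P_{a'}(\mathbf A')|/2+1=|K|/2+1$, so $|A|<|K|/2+1$ exactly as stated.
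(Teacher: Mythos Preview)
Your proposal is correct and follows essentially the same route as the paper: reduce to a singleton $S=\{a'\}$ via Lemma~\ref{lem5}, bound $|A'|$ by Lemma~\ref{lem4}, embed $\mathbf A'$ into $(K,\leq)$ through $x\mapsto(x,a')$ composed with the isomorphism $\mathbf P_{a'}(\mathbf A')\cong\mathbf K$ (Lemma~\ref{lem2}), and take $\mathbf A$ to be the image. Your explicit discussion of the functoriality step ($\mathbf A'\cong\mathbf A$ inducing $\mathbf P_{a'}(\mathbf A')\cong\mathbf P_a(\mathbf A)$) is more detailed than the paper's, which simply asserts it; the only cosmetic issue is that invoking Lemma~\ref{lem5}(iii) to force $|S|=1$ presupposes the existence of a fixed point, so (ii) should logically come first (as you note afterward).
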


\begin{proof}
	Since $\mathbf K$ is representable, there exists some poset $\mathbf A^*=(A^*,\leq)$ and some non-empty subset $S$ of $A^*$ such that $\mathbf P_S(\mathbf A^*)\cong\mathbf K$. According to Lemma~\ref{lem5}, $'$ has a fixed point in $K$ and hence $|S|=1$, say $S=\{b\}$, again because of Lemma~\ref{lem5}. According to Lemma~\ref{lem4}, $A^*$ is finite, and $|A^*|<|K|/2+1$. Let $f$ denote an isomorphism from $P_b(\mathbf A^*)$ to $\mathbf K$. Obviously, $(b,b)$ is the unique fixed point of $'$ in $P_b(\mathbf A^*)$, and hence $f(b,b)=a$. Let $g$ denote the embedding $x\mapsto(x,b)$ of $\mathbf A^*$ into $(P_b(\mathbf A^*),\sqsubseteq)$. Then $f\circ g$ is an embedding of $\mathbf A^*$ into $(K,\leq)$ mapping $b$ onto $a$. Hence, if $A:=f\big(g(A^*)\big)$ then $\mathbf A:=(A,\leq)$ is a subposet of $(K,\leq)$ isomorphic to $\mathbf A^*$ and $\mathbf P_a(\mathbf A)\cong \mathbf P_b(\mathbf A^*)\cong\mathbf K$.
\end{proof}

The following theorem shows a class of non-representable Kleene lattices.

\begin{theorem}\label{th1}
	Let $\mathbf C$ be a finite chain containing more than one element and $\mathbf B$ the four-element Boolean algebra and let $\mathbf K=(K,\vee,\wedge,{}')$ denote the Kleene lattice $\mathbf C+_b\mathbf B+_c\mathbf B+_d\mathbf C$. Then $\mathbf K$ is not representable.
\end{theorem}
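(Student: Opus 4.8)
The plan is to argue by contradiction, combining Theorem~\ref{th2}, Lemma~\ref{lem3} and Lemma~\ref{lem5} with an explicit count of $P_a(\mathbf A)$. Let $n\ge 2$ be the number of elements of $\mathbf C$. First I would record that $\mathbf K=\mathbf C+_b\mathbf B+_c\mathbf B+_d\mathbf C$ has $2n+5$ elements, so $|K|$ is odd, and that $\mathbf K$ has no three-element antichain (each ordinal summand $\mathbf C$ or $\mathbf B$ has width at most $2$, and elements lying in distinct summands are comparable). I would then locate the fixed point of $'$: an antitone involution fixing $x$ maps $L(x)$ bijectively onto $U(x)$, so $|L(x)|=|U(x)|$; along a maximal chain of $\mathbf K$ the numbers $|L(x)|$ and $|U(x)|$ move strictly in opposite directions and coincide only at the element $c$ where the two copies of $\mathbf B$ are glued, with $|L(c)|=|U(c)|=n+3$. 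By Lemma~\ref{lem5} this $c$ is the unique fixed point of $'$, and $L(c)\cong\mathbf C+_b\mathbf B$.

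Next, assuming $\mathbf K$ representable, I would invoke Theorem~\ref{th2} (applicable since $\mathbf K$ is a finite Kleene poset with oddly many elements) to get a poset $\mathbf A=(A,\le)$, an element $a\in A$ and an isomorphism $\varphi\colon\mathbf P_a(\mathbf A)\to\mathbf K$ sending the unique fixed point $(a,a)$ to $c$. Since $\mathbf P_a(\mathbf A)$ inherits the absence of three-element antichains, Lemma~\ref{lem3} gives that $a$ is comparable with every element of $A$ and is join- and meet-irreducible in $\mathbf A$. I would set $A^-=\{x\in A:x<a\}$ and $A^+=\{x\in A:x>a\}$, so $A=A^-\cupdot\{a\}\cupdot A^+$; finiteness of $\mathbf A$ together with the irreducibility of $a$ forces $A^-$ to have a greatest element $m$ and $A^+$ a least element $M$. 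A routine inspection of the condition $L(x,y)\le a\le U(x,y)$ then pins down $P_a(\mathbf A)$: a pair with both coordinates in $A^-$ never lies in it (their common upper bound $m$ does not satisfy $a\le m$), dually for $A^+$, while every remaining pair does lie in it. With $\alpha=|A^-|$ and $\beta=|A^+|$ this gives $|P_a(\mathbf A)|=1+2\alpha+2\beta+2\alpha\beta$, and comparing with $|K|=2n+5$ yields $\alpha+\beta+\alpha\beta=n+2$.

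The second equation I would extract from the order near the fixed point. The isomorphism $\varphi$ restricts to $L\big((a,a)\big)\cong L(c)\cong\mathbf C+_b\mathbf B$; in the latter the top element has exactly two lower covers, whose meet $b$ sits above the $n$-element chain $L(b)$. On the other side, the two lower covers of $(a,a)$ work out to be precisely $(m,a)$ and $(a,M)$, their meet is $(m,M)$, and $L\big((m,M)\big)=\{(x,y):x\le m,\ y\ge M\}$ equals $A^-\times A^+$ in the twist order, of size $\alpha\beta$. Hence $\alpha\beta=|L(b)|=n$, and subtracting from $\alpha+\beta+\alpha\beta=n+2$ leaves $\alpha+\beta=2$. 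Finally I would dispose of $\alpha=0$ and $\beta=0$: if $A^-=\emptyset$ then, as $A^+$ has a unique minimal element, $(a,a)$ has the single lower cover $(a,M)$, contradicting that its $\varphi$-image $c$ has two lower covers; the case $\beta=0$ is dual. So $\alpha,\beta\ge 1$, forcing $\alpha=\beta=1$ and $n=\alpha\beta=1$, which contradicts $n\ge 2$. Therefore $\mathbf K$ is not representable.

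I expect the genuine difficulty --- as opposed to bookkeeping --- to be converting the conclusions of Lemma~\ref{lem3} into the exact membership pattern of $P_a(\mathbf A)$ and the precise shape of the principal ideals $L\big((a,a)\big)$ and $L\big((m,M)\big)$, and in particular confirming that $(a,a)$ has exactly the two lower covers $(m,a)$, $(a,M)$ with meet $(m,M)$. The two degenerate cases $\alpha=0$ and $\beta=0$ need their own short argument as above; the remainder is the arithmetic of the two displayed identities.
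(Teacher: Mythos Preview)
Your proof is correct and takes a genuinely different route from the paper's. Both arguments begin identically: invoke Theorem~\ref{th2} (using that $|K|=2n+5$ is odd) to obtain $\mathbf A$ and $a$ with $\mathbf P_a(\mathbf A)\cong\mathbf K$ and $(a,a)\mapsto c$, then apply Lemma~\ref{lem3} (using that $\mathbf K$ has no three-element antichain) to conclude that $a$ is comparable with every element of $A$ and is join- and meet-irreducible. From this point the two proofs diverge.

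The paper argues qualitatively. It first appeals to Corollary~\ref{cor4} to rule out the possibility that $\mathbf A$ is a chain (for then $P_a(\mathbf A)$ would decompose as an ordinal sum of two products of chains, which cannot match $\mathbf C+_b\mathbf B+_c\mathbf B+_d\mathbf C$ when $n\ge2$). Hence $A$ contains incomparable elements $u,v$, necessarily both above $a$ or both below $a$; say $u,v>a$. Then $(u,a)$ and $(v,a)$ form a two-element antichain strictly above $(a,a)$ in $P_a(\mathbf A)\cong\mathbf K$. But in $\mathbf K$ the \emph{only} two-element antichain above $c$ is the pair of atoms of the upper copy of $\mathbf B$, which cover $c$ and have meet $c$. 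Transporting back, $(u,a)\sqcap(v,a)=(a,a)$, so $u\wedge v=a$ in $\mathbf A$, contradicting meet-irreducibility of $a$.

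Your approach is quantitative: from irreducibility you extract the unique covers $m<a<M$, count $|P_a(\mathbf A)|=1+2\alpha+2\beta+2\alpha\beta$, identify the two lower covers $(m,a),(a,M)$ of $(a,a)$ with meet $(m,M)$, and match $|L((m,M))|=\alpha\beta$ against $|L(b)|=n$ via the isomorphism. The resulting system forces $\alpha=\beta=1$ and $n=1$. Your argument is more self-contained --- it bypasses Corollary~\ref{cor4} entirely and handles the degenerate cases $\alpha=0$, $\beta=0$ explicitly --- at the price of more bookkeeping. The paper's argument is terser but asks the reader to see both why Corollary~\ref{cor4} excludes chains and why any incomparable pair in $A$ above $a$ must land on the unique two-element antichain above $c$ in $\mathbf K$.
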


\begin{proof}
	Using the method of indirect proof, let us suppose $\mathbf K$ to be representable. According to Theorem~\ref{th2}, there exists some subposet $\mathbf A=(A,\leq)$ of $(K,\vee,\wedge)$ containing the element $e\in A$ such that $\mathbf P_e(\mathbf A)\cong\mathbf K$ and 
	$(e,e)\mapsto c$. Moreover, $e$ is both  meet-irreducible and  join-irreducible. 
	Corollary~\ref{cor4} shows that $\mathbf A$ cannot be a chain. Hence $A$ must contain 
	two non-comparable elements $u$ and $v$ such that $u$ and $v$ are 
	comparable with $e$. Hence either 
	$u,v\in U(e)$ or $u,v\in L(e)$. 
	In the first case $(u,e)$ and  $(v,e)$ cover $(e,e)$ and $(u,e)\sqcap (v,e)=(e,e)$, i.e., $u\wedge v=e$  in 
	$\mathbf A$, a contradiction with meet-irreducibility of $e$. 
	In the second case $(u,e)$ and  $(v,e)$ are covered by $(e,e)$ and $(u,e)\sqcup (v,e)=(e,e)$, i.e., $u\vee v=e$  in 
	$\mathbf A$, a contradiction with join-irreducibility of $e$. This shows that $\mathbf K$ is not representable.
\end{proof}

Note that since the proof of Theorem~\ref{th1} uses Lemma~\ref{lem3} and since in Lemma~\ref{lem3} we have the assumption that $\big(P_a(\mathbf A),\sqsubseteq)$ does not have a three-element antichain, one cannot replace the four-element Boolean algebra $\mathbf B$ in Theorem~\ref{th1} by a larger Boolean algebra.

\section{Dedekind-MacNeille completion of Kleene posets}

Recall that the {\em Dedekind-MacNeille completion} $\BDM(\mathbf A)$ of a poset $\mathbf A=(A,\leq)$ is the complete lattice $\big(\DM(\mathbf A),\subseteq\big)$ where
\[
\DM(\mathbf A):=\{L(B)\mid B\subseteq A\}=\{C\subseteq A\mid LU(C)=C\}.
\]
For Kleene posets, we can show the following:

\begin{example}
Consider the Kleene poset $\mathbf A$ depicted in Figure~7 such that 
$a'=d$ and $b'=c$:

\vspace*{2mm}

%\begin{center}
\setlength{\unitlength}{6mm}
\begin{tabular}{@{}c c c c c@{}}
\begin{picture}(5,10)
\put(3,1){\circle*{.3}}
\put(1,3){\circle*{.3}}
\put(5,3){\circle*{.3}}
\put(1,7){\circle*{.3}}
\put(5,7){\circle*{.3}}
\put(3,9){\circle*{.3}}
\put(3,1){\line(-1,1)2}
\put(3,1){\line(1,1)2}
\put(1,3){\line(0,1)4}
\put(1,3){\line(1,1)4}
\put(5,3){\line(-1,1)4}
\put(5,3){\line(0,1)4}
\put(3,9){\line(-1,-1)2}
\put(3,9){\line(1,-1)2}
\put(2.85,.25){$0$}
\put(.25,2.85){$a$}
\put(5.4,2.85){$b$}
\put(.25,6.85){$c$}
\put(5.4,6.85){$d$}
\put(2.85,9.4){$1$}
\put(2.2,-1.75){{\rm Fig.~7}}
\end{picture}&\phantom{xxxxx} &%
\begin{picture}(6,14)
	\put(3,1){\circle*{.3}}
	\put(1,3){\circle*{.3}}
	\put(5,3){\circle*{.3}}
	\put(1,7){\circle*{.3}}
	\put(5,7){\circle*{.3}}
	\put(1,11){\circle*{.3}}
	\put(5,11){\circle*{.3}}
	\put(3,13){\circle*{.3}}
	\put(3,1){\line(-1,1)2}
	\put(3,1){\line(1,1)2}
	\put(1,3){\line(0,1)8}
	\put(1,3){\line(1,1)4}
	\put(5,3){\line(-1,1)4}
	\put(5,3){\line(0,1)8}
	\put(1,7){\line(1,1)4}
	\put(5,7){\line(-1,1)4}
	\put(1,11){\line(1,1)2}
	\put(5,11){\line(-1,1)2}
	\put(2.35,.25){$(0,1)$}
	\put(-.8,2.85){$(0,c)$}
	\put(5.3,2.85){$(0,d)$}
	\put(-.8,6.85){$(a,b)$}
	\put(-.8,10.85){$(d,0)$}
	\put(5.3,6.85){$(b,a)$}
	\put(5.3,10.85){$(c,0)$}
	\put(2.35,13.4){$(1,0)$}
	\put(2.2,-1.75){{\rm Fig.~8}}
\end{picture}&\phantom{xxxxxxxxx} &%
\begin{picture}(6,14)
	\put(3,1){\circle*{.3}}
	\put(1,3){\circle*{.3}}
	\put(5,3){\circle*{.3}}
	\put(3,5){\circle*{.3}}
	\put(1,7){\circle*{.3}}
	\put(5,7){\circle*{.3}}
	\put(3,9){\circle*{.3}}
	\put(1,11){\circle*{.3}}
	\put(5,11){\circle*{.3}}
	\put(3,13){\circle*{.3}}
	\put(3,1){\line(-1,1)2}
	\put(3,1){\line(1,1)2}
	\put(1,3){\line(1,1)4}
	\put(5,3){\line(-1,1)4}
	\put(1,7){\line(1,1)4}
	\put(5,7){\line(-1,1)4}
	\put(1,11){\line(1,1)2}
	\put(5,11){\line(-1,1)2}
	\put(1.9,.25){$L\big((0,1)\big)$}
	\put(-1.9,2.85){$L\big((0,c)\big)$}
	\put(5.3,2.85){$L\big((0,d)\big)$}
	\put(3.3,4.85){$L\big((a,b),(b,a))\big)$}
	\put(-1.9,6.85){$L\big((a,b)\big)$}
	\put(-1.9,10.85){$L\big((d,0)\big)$}
	\put(5.3,6.85){$L\big((b,a)\big)$}
	\put(3.3,8.85){$L\big((d,0),(c,0)\big)$}
	\put(5.3,10.85){$L\big((c,0)\big)$}
	\put(1.9,13.4){$L\big((1,0)\big)$}
	\put(2.2,-1.75){{\rm Fig.~9}}
\end{picture}
\end{tabular}
%\end{center}

\vspace*{12mm}

Put $S:=\{a,b\}$. Then the poset $\big(P_S(\mathbf A),\sqsubseteq\big)$ is visualized in Figure~8.

The Dedekind-MacNeille completion $\BDM\big(P_S(\mathbf A),\sqsubseteq\big)$ of this poset is depicted in Figure~9, where
\begin{align*}
      L\big((0,1)\big) & =\{(0,1)\}, \\
      L\big((0,c)\big) & =\{(0,1),(0,c)\}, \\
      L\big((0,d)\big) & =\{(0,1),(0,d)\}, \\
L\big((a,b),(b,a)\big) & =\{(0,1),(0,c),(0,d)\}, \\
      L\big((a,b)\big) & =\{(0,1),(0,c),(0,d),(a,b)\}, \\
      L\big((b,a)\big) & =\{(0,1),(0,c),(0,d),(b,a)\}, \\
L\big((d,0),(c,0)\big) & =\{(0,1),(0,c),(0,d),(a,b),(b,a)\}, \\
      L\big((d,0)\big) & =\{(0,1),(0,c),(0,d),(a,b),(b,a),(d,0)\}, \\
      L\big((c,0)\big) & =\{(0,1),(0,c),(0,d),(a,b),(b,a),(c,0)\}, \\
      L\big((1,0)\big) & =\{(0,1),(0,c),(0,d),(a,b),(b,a),(d,0),(c,0),(1,0)\}.
\end{align*}
The Dedekind-MacNeille completion $\BDM(\mathbf A)$ of the given poset $\mathbf A$ is visualized in Figure~10; here the involution is given by 
$L(a)'=L(d)$, $L(b)'=L(c)$:

\vspace*{2mm}

\setlength{\unitlength}{6mm}
\begin{tabular}{@{}c c c c c@{}}
\phantom{xxx}&\begin{picture}(6,10)
\put(3,1){\circle*{.3}}
\put(1,3){\circle*{.3}}
\put(5,3){\circle*{.3}}
\put(3,5){\circle*{.3}}
\put(1,7){\circle*{.3}}
\put(5,7){\circle*{.3}}
\put(3,9){\circle*{.3}}
\put(3,1){\line(-1,1)2}
\put(3,1){\line(1,1)2}
\put(1,3){\line(1,1)4}
\put(5,3){\line(-1,1)4}
\put(3,9){\line(-1,-1)2}
\put(3,9){\line(1,-1)2}
\put(2.4,.25){$L(0)$}
\put(-.7,2.85){$L(a)$}
\put(5.3,2.85){$L(b)$}
\put(3.3,4.85){$L(c,d)$}
\put(-.7,6.85){$L(c)$}
\put(5.3,6.85){$L(d)$}
\put(2.4,9.4){$L(1)$}
\put(2.2,-1.75){{\rm Fig.~10}}
\end{picture}&\phantom{xxxxxxxxxxxxxxxxxxxx} &\begin{picture}(6,14)
\put(3,1){\circle*{.3}}
\put(1,3){\circle*{.3}}
\put(5,3){\circle*{.3}}
\put(3,5){\circle*{.3}}
\put(1,7){\circle*{.3}}
\put(5,7){\circle*{.3}}
\put(3,9){\circle*{.3}}
\put(1,11){\circle*{.3}}
\put(5,11){\circle*{.3}}
\put(3,13){\circle*{.3}}
\put(3,1){\line(-1,1)2}
\put(3,1){\line(1,1)2}
\put(1,3){\line(1,1)4}
\put(5,3){\line(-1,1)4}
\put(1,7){\line(1,1)4}
\put(5,7){\line(-1,1)4}
\put(1,11){\line(1,1)2}
\put(5,11){\line(-1,1)2}
\put(1.5,.25){$\big(L(0),L(1)\big)$}
\put(-3.05,2.85){$\big(L(0),L(c)\big)$}
\put(5.3,2.85){$\big(L(0),L(d)\big)$}
\put(3.3,4.85){$\big(L(0),L(c,d)\big)$}
\put(-3.05,6.85){$\big(L(a),L(b)\big)$}
\put(-3.05,10.85){$\big(L(d),L(0)\big)$}
\put(5.3,6.85){$\big(L(b),L(a)\big)$}
\put(3.3,8.85){$\big(L(c,d),L(0)\big)$}
\put(5.3,10.85){$\big(L(c),L(0)\big)$}
\put(1.5,13.4){$\big(L(1),L(0)\big)$}
\put(2.2,-1.75){{\rm Fig.~11}}
\end{picture} &
\end{tabular}

\vspace*{12mm}

Finally, the lattice $\Big(P_{\{L(s)\mid s\in S\}}\big(\BDM(\mathbf A)\big),\sqcup,\sqcap\Big)$ is depicted in Figure~11, where

\begin{align*}
  \big(L(0),L(1)\big) & =(\{0\},\{0,a,b,c,d,1\}), \\
  \big(L(0),L(c)\big) & =(\{0\},\{0,a,b,c\}), \\
  \big(L(0),L(d)\big) & =(\{0\},\{0,a,b,d\}), \\
\big(L(0),L(c,d)\big) & =(\{0\},\{0,a,b\}), \\
  \big(L(a),L(b)\big) & =(\{0,a\},\{0,b\}), \\
  \big(L(b),L(a)\big) & =(\{0,b\},\{0,a\}), \\
\big(L(c,d),L(0)\big) & =(\{0,a,b\},\{0\}), \\
  \big(L(d),L(0)\big) & =(\{0,a,b,d\},\{0\}), \\
  \big(L(c),L(0)\big) & =(\{0,a,b,c\},\{0\}), \\
  \big(L(1),L(0)\big) & =(\{0,a,b,c,d,1\},\{0\}).
\end{align*}
Hence, in this case, the lattices $\Big(P_{\{L(s)\mid s\in S\}}\big(\BDM(\mathbf A)\big),\sqcup,\sqcap\Big)$ and $\BDM\big(P_S(\mathbf A),\sqsubseteq\big)$ are isomorphic.
\end{example}

The lattices mentioned above need not be isomorphic for distributive posets 
$\mathbf A$, which are not Kleene posets.

%For distributive posets $\mathbf A$, which are not Kleene posets, the 
%lattices mentioned above  need not be isomorphic.

\begin{example}\label{counterdm}
	Consider the distributive poset $\mathbf A$ which is not a Kleene poset depicted in Figure~12:
	
	\vspace*{8mm}
	
		\setlength{\unitlength}{10mm}
		\begin{tabular}{@{}c c c c c c@{}}
			\phantom{xxx}
		&\begin{picture}(2,4)
	%	\put(1,0){\circle*{.3}}
	%	\put(1,1){\circle*{.3}}
		\put(0,2){\circle*{.3}}
		\put(2,2){\circle*{.3}}
		\put(1,3){\circle*{.3}}
		\put(1,4){\circle*{.3}}
	%	\put(1,1){\line(-1,1)1}
	%	\put(1,1){\line(0,-1)1}
	%	\put(1,1){\line(1,1)1}
		\put(1,3){\line(-1,-1)1}
		\put(1,3){\line(1,-1)1}
		\put(1,3){\line(0,1)1}
	%	\put(.85,-.6){$0$}
	%	\put(1.35,.8){$a$}
		\put(-.6,1.8){$c$}
		\put(2.35,1.8){$d$}
		\put(1.35,2.8){$b$}
		\put(.85,4.3){$1$}
		\put(.4,1.0){{\rm Fig.~12}}
	\end{picture}&\phantom{xxxxxxxxxx}&	\begin{picture}(2,3)
	%	\put(1,0){\circle*{.3}}
	%	\put(1,1){\circle*{.3}}
	\put(0,3){\circle*{.3}}
	\put(2,3){\circle*{.3}}
	%	\put(1,3){\circle*{.3}}
	%	\put(1,4){\circle*{.3}}
	%	\put(1,1){\line(-1,1)1}
	%	\put(1,1){\line(0,-1)1}
	%	\put(1,1){\line(1,1)1}
	%	\put(1,3){\line(-1,-1)1}
	%	\put(1,3){\line(1,-1)1}
	%	\put(1,3){\line(0,1)1}
	%	\put(.85,-.6){$0$}
	%	\put(1.35,.8){$a$}
	\put(-1.2096,2.8){$(c,d)$}
	\put(2.35,2.8){$(d,c)$}
	%		\put(1.35,2.8){$b$}
	%		\put(.85,4.3){$1$}
	\put(.4,1.0){{\rm Fig.~13}}
\end{picture}
&\phantom{xxxxxxxxxxxxxxxxx} &\begin{picture}(2,4)
	%	\put(1,0){\circle*{.3}}
	\put(1,2){\circle*{.3}}
	\put(0,3){\circle*{.3}}
	\put(2,3){\circle*{.3}}
	\put(1,4){\circle*{.3}}
	%	\put(1,4){\circle*{.3}}
	\put(1,2){\line(-1,1)1}
	%	\put(1,1){\line(0,-1)1}
	\put(1,2){\line(1,1)1}
	\put(1,4){\line(-1,-1)1}
	\put(1,4){\line(1,-1)1}
	\put(1.35,1.8){$\emptyset=LU(\emptyset)$}
	\put(-1.45096,2.8){$L(c,d)$}
	\put(2.35,2.8){$L(d,c)$}
	\put(-0.385,4.3){$LU\big((c,d), (d,c)\big)$}
	%		\put(.85,4.3){$1$}
	\put(.4,1.0){{\rm Fig.~14}}
\end{picture}
	\end{tabular}

\vspace*{-2mm}

Put $S:=\{c,d\}$. Then the poset $\big(P_S(\mathbf A),\sqsubseteq\big)$ is visualized in Figure~13.

The Dedekind-MacNeille completion $\BDM\big(P_S(\mathbf A),\sqsubseteq\big)$ of this poset is depicted in Figure~14 and the Dedekind-MacNeille completion $\BDM(\mathbf A)$ of the given poset $\mathbf A$ is visualized in Figure~15:

\vspace*{12mm}

	\setlength{\unitlength}{8mm}
	\begin{tabular}{@{}c c c c c@{}}
	\phantom{xxxxxxxxxx}&	\begin{picture}(2,4)
		%	\put(1,0){\circle*{.3}}
			\put(1,1){\circle*{.3}}
			\put(0,2){\circle*{.3}}
			\put(2,2){\circle*{.3}}
			\put(1,3){\circle*{.3}}
			\put(1,4){\circle*{.3}}
			\put(1,1){\line(-1,1)1}
		%	\put(1,1){\line(0,-1)1}
			\put(1,1){\line(1,1)1}
			\put(1,3){\line(-1,-1)1}
			\put(1,3){\line(1,-1)1}
			\put(1,3){\line(0,1)1}
		%	\put(.85,-.6){$0$}
			\put(1.35,.8){$\emptyset=L(c,d)$}
			\put(-1.098,1.8){$L(c)$}
			\put(2.35,1.8){$L(d)$}
			\put(1.35,2.8){$L(b)$}
			\put(.85,4.3){$L(1)$}
			\put(.4,-1.4){{\rm Fig.~15}}
		\end{picture}&\phantom{xxxxxxxxxxxxxxxxxxxxxxxxxxx}&%
	\begin{picture}(2,4)
		\put(1,0){\circle*{.3}}
		\put(1,1){\circle*{.3}}
		\put(0,2){\circle*{.3}}
		\put(2,2){\circle*{.3}}
		\put(1,3){\circle*{.3}}
		\put(1,4){\circle*{.3}}
		\put(1,1){\line(-1,1)1}
		\put(1,1){\line(0,-1)1}
		\put(1,1){\line(1,1)1}
		\put(1,3){\line(-1,-1)1}
		\put(1,3){\line(1,-1)1}
		\put(1,3){\line(0,1)1}
		\put(1.5,-.6){$(\emptyset, L(1))$}
		\put(1.5,.8){$(\emptyset, L(b))$}
		\put(-2.9296,1.8){$(L(c), L(d))$}
		\put(2.35,1.8){$(L(d), L(c))$}
		\put(1.5,2.8){$(L(b), \emptyset)$}
		\put(1.5,4.3){$(L(1), \emptyset)$}
		\put(.2,-1.4){{\rm Fig.~16}}
	\end{picture}&
\end{tabular}
\vspace*{12mm}

Finally, the lattice $\Big(P_{\{L(s)\mid s\in S\}}\big(\BDM(\mathbf A)\big),\sqcup,\sqcap\Big)$ is depicted in Figure~16.

Hence, in this case, the lattices $\Big(P_{\{L(s)\mid s\in S\}}\big(\BDM(\mathbf A)\big),\sqcup,\sqcap\Big)$ and $\BDM\big(P_S(\mathbf A),\sqsubseteq\big)$ 
are not isomorphic.
\end{example}

Let  $\mathbf A=(A,\leq)$ be a distributive poset and let $Fin(A)$ denote the set of all finite subsets of $A$. We put (see \cite{Niederle}) 
$$
G(\mathbf A) := \{L\big(U(A_1),...,U(A_n)\big)\mid  n \in \mathbb N_{+} \& 
\forall i, 1\leq i\leq n, \emptyset\not= A_i \in Fin(\mathbf A)\}. 
$$

Note that $G(\mathbf  A)$ is a subset of $\BDM(\mathbf A)$,  
containing all principal ideals. It is worth noticing that $G(\mathbf A)=\BDM(\mathbf A)$ provided $A$ is finite. Moreover, from (\cite[Proposition~31]{Niederle}) we immediately 
obtain that any element of $G(\mathbf A)$ is also of the form $LU\big(L(B_1),...,L(B_n)\big)$ where 
$B_i$ are finite non-empty subsets of $A$.

The following definition and theorem are motivated by a similar result of Niederle for Boolean posets (\cite[Theorem~17]{Niederle}).

\begin{definition}\label{doubly}
	\begin{enumerate}
		\item Let $\mathbf A=(A,\leq)$ be a  poset.  A subset $X$ of $A$ is called {\em  doubly dense in $\mathbf A$} if $a=\bigvee_{\mathbf A}\big(L(a)\cap X\big)=\bigwedge_{\mathbf A}\big(U(a)\cap X\big)$ for all $a\in A$.

		\item Let $\mathbf A=(A,\leq,{}')$ be a poset with an antitone involution $'$. A subset $X$ of $A$ is called {\em involution-closed and doubly dense in $\mathbf A$} if $X'\subseteq X$ and $X$ is doubly dense in $\mathbf A$.
	\end{enumerate}
\end{definition}

We will need the following

\begin{proposition}[\cite{Niederle}, Proposition 33]\label{niedga}
	Let  $\mathbf A=(A,\leq)$ be a distributive poset. Then 
	$\big(G(\mathbf A),\subseteq\big)$  
	is a distributive lattice and $X= \{L(a) \mid a \in A\}$ is doubly dense in 
	$G(\mathbf A)$, 
	generates $G(\mathbf A)$ and $(X, \subseteq)$ is isomorphic to $\mathbf A$. 
\end{proposition}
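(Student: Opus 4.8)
The statement is \cite[Proposition 33]{Niederle}; here is the plan of proof. Throughout, I would work inside the Dedekind--MacNeille completion $\BDM(\mathbf A)$, using the standard facts that for closed subsets $C,D$ of $A$ (those with $LU(C)=C$) the meet and join in $\BDM(\mathbf A)$ are $C\wedge D=C\cap D$ and $C\vee D=LU(C\cup D)$, that more generally $\bigwedge_iC_i=\bigcap_iC_i$ and $\bigvee_iC_i=LU\big(\bigcup_iC_i\big)$, and that $a\mapsto L(a)$ is an order embedding of $\mathbf A$ into $\BDM(\mathbf A)$ with image exactly $X$. This last fact already gives $(X,\subseteq)\cong\mathbf A$.

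First I would show that $G(\mathbf A)$ is a \emph{sublattice} of $\BDM(\mathbf A)$, hence a lattice. From the defining form $C=L\big(U(A_1),\dots,U(A_n)\big)$ and the identity $L(S)\cap L(T)=L(S\cup T)$ one sees that $C_1\cap C_2$ is again of that form, so $G(\mathbf A)$ is closed under $\wedge$. From the equivalent form $C=LU\big(L(B_1),\dots,L(B_m)\big)$ supplied by \cite[Proposition 31]{Niederle} one checks that $LU(C_1\cup C_2)=LU\big(L(B_1),\dots,L(B_m),L(D_1),\dots,L(D_k)\big)$ --- ``$\supseteq$'' because each listed $L(\cdot)$ lies in $C_1\cup C_2$, ``$\subseteq$'' because each $C_i$ already lies in the $LU$ of the enlarged union --- so $G(\mathbf A)$ is closed under $\vee$. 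Hence $\big(G(\mathbf A),\subseteq\big)$ is a lattice with the operations inherited from $\BDM(\mathbf A)$. Since every $C\in G(\mathbf A)$ equals $LU\big(L(B_1),\dots,L(B_m)\big)=\bigvee_i\bigwedge_{b\in B_i}L(b)$, a finite join of finite meets of elements of $X$, the sublattice generated by $X$ is all of $G(\mathbf A)$; that is, $X$ generates $G(\mathbf A)$.

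The crux will be \emph{distributivity} of $G(\mathbf A)$. This is not inherited from $\BDM(\mathbf A)$, which need not be distributive even when $\mathbf A$ is a distributive poset, so the LU-distributivity of $\mathbf A$ has to be used. I would first record the pivotal instance, namely that meet with a principal ideal distributes over finite joins: for $x_1,\dots,x_m,z\in A$,
\begin{align*}
\Big(\bigvee_iL(x_i)\Big)\wedge L(z)
&=L\big(U(x_1,\dots,x_m),z\big)=LU\big(L(x_1,z),\dots,L(x_m,z)\big)\\
&=\bigvee_i\big(L(x_i)\wedge L(z)\big),
\end{align*}
where the first equality uses $LU\big(L(x_1),\dots,L(x_m)\big)=LU(x_1,\dots,x_m)$ and $L(x_i)\cap L(z)=L(x_i,z)$, and the second is the defining identity of a distributive poset. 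Feeding the representation of the elements of $G(\mathbf A)$ as finite joins of finite meets of principal ideals into this identity and iterating --- first to ``meet with a finite meet of principal ideals'', then to ``meet with an arbitrary element of $G(\mathbf A)$'' --- would reduce the general law $C_1\wedge(C_2\vee C_3)=(C_1\wedge C_2)\vee(C_1\wedge C_3)$ to the displayed identity and its multivariable variants. I expect this reduction to be the only real obstacle; it is the technical heart of Niederle's argument, everything else being formal.

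Finally I would verify \emph{double density} of $X$ in $G(\mathbf A)$. Fix $C\in G(\mathbf A)$; being closed, $C$ is a down-set, so $L_{G(\mathbf A)}(C)\cap X=\{L(a)\mid a\in C\}$ and $U_{G(\mathbf A)}(C)\cap X=\{L(a)\mid a\in U(C)\}$. In $\BDM(\mathbf A)$ the supremum of the first family is $LU\big(\bigcup_{a\in C}L(a)\big)=LU(C)=C$ and the infimum of the second is $\bigcap_{a\in U(C)}L(a)=L\big(U(C)\big)=C$; since these bounds lie in $G(\mathbf A)$ and $G(\mathbf A)$ is a subposet of $\BDM(\mathbf A)$, they are the supremum and infimum computed in $G(\mathbf A)$ as well. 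Thus $C=\bigvee_{G(\mathbf A)}\big(L_{G(\mathbf A)}(C)\cap X\big)=\bigwedge_{G(\mathbf A)}\big(U_{G(\mathbf A)}(C)\cap X\big)$, so $X$ is doubly dense in $G(\mathbf A)$, which would complete the proof.
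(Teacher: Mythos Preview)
The paper does not give its own proof of this proposition: it is quoted verbatim as \cite[Proposition~33]{Niederle} and immediately used, so there is nothing in the paper to compare your argument against line by line. Your plan is a faithful reconstruction of the standard argument behind Niederle's result, and the pieces you isolate --- sublattice closure of $G(\mathbf A)$ inside $\BDM(\mathbf A)$, generation by $X$, the LU-distributivity identity $L\big(U(x_1,\dots,x_m),z\big)=LU\big(L(x_1,z),\dots,L(x_m,z)\big)$ as the pivot for lattice distributivity, and double density via $C=LU(C)=L\big(U(C)\big)$ --- are the right ones.

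Two small points worth tightening. First, for closure under joins you use that every $LU\big(L(B_1),\dots,L(B_m)\big)$ with finite $B_i$ lies in $G(\mathbf A)$; the paper (following \cite[Proposition~31]{Niederle}) only states the converse direction explicitly, so in a full write-up you should either invoke the dual half of that proposition or derive it from the distributive identity directly. Second, your ``iterating'' step for distributivity is where the actual work sits: a general $C_1\in G(\mathbf A)$ is a finite meet of finite \emph{joins} of principal ideals, not just a finite meet of principal ideals, so the bootstrapping from $L(z)\wedge\bigvee_iL(x_i)=\bigvee_i L(x_i,z)$ to arbitrary $C_1$ needs one more layer than you indicate. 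You flag this correctly as the technical heart; just be aware the induction has two nested levels, not one.
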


In what follows, if $\mathbf A=(A,\leq,{}')$ is a  
poset with an antitone involution $'$ and $X\subseteq A$, we define:
\begin{itemize}
	\item $X':=\{x'\in A\mid x\in X\}$,
	\item $X^\bot:=\{a\in A\mid a\leq x'\text{ for all }x\in X\}=L(X')$.
\end{itemize}

\begin{remark}\label{ddcd}\em 
	Recall that any involution-closed and doubly dense subset $X$ in $\mathbf A$ is a poset with induced order and involution. Moreover, if $\mathbf A=(A,\leq,{}')$ is a poset with an antitone involution $'$ then $A$ is an involution-closed and doubly dense subset in its Dedekind-MacNeille completion $\BDM(\mathbf A)$ 
	with involution ${}^\bot$. This can be shown by the same arguments as in {\rm(\cite[Theorem 16]{Niederle})}, so we omit it.
\end{remark}

By the preceding remark,  Proposition \ref{niedga}  and 
\cite[Theorem 34]{Niederle}  we have the following

\begin{corollary}\label{dstinv}
	Let  $\mathbf A=(A,\leq,\, {}')$ be a distributive poset  with an antitone involution $'$. 
	Then $\big(G(\mathbf A),\subseteq, {}^{\bot}\big)$  
	is a distributive lattice  with an antitone involution 
	${}^{\bot}$ and $X= \{L(a) \mid a \in A\}$ is  involution-closed  and doubly dense in 
	$G(\mathbf A)$, 
	generates $G(\mathbf A)$ and $(X, \subseteq, {}^{\bot})$ is isomorphic to $\mathbf A$. 
\end{corollary}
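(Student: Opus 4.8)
The plan is to read off everything except the behaviour of the involution ${}^{\bot}$ directly from Proposition~\ref{niedga}, and then to treat ${}^{\bot}$ by combining Remark~\ref{ddcd} with the De~Morgan laws for an antitone involution. Indeed, Proposition~\ref{niedga} applied to the distributive poset $(A,\leq)$ already yields that $\big(G(\mathbf A),\subseteq\big)$ is a distributive lattice, that $X=\{L(a)\mid a\in A\}$ is doubly dense in $G(\mathbf A)$ and generates it, and that the map $\varphi\colon a\mapsto L(a)$ is an order isomorphism of $\mathbf A$ onto $(X,\subseteq)$. So the two remaining tasks are: (i) $X$ is involution-closed and $\varphi$ carries $'$ to ${}^{\bot}$; (ii) ${}^{\bot}$ is an antitone involution on $\big(G(\mathbf A),\subseteq\big)$.

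For (i) I would use Remark~\ref{ddcd} (equivalently \cite[Theorem~16]{Niederle}), which gives that ${}^{\bot}$ is an antitone involution on the complete lattice $\BDM(\mathbf A)$. The only computation involved is the De~Morgan identity $\big(L(B)\big)^{\bot}=L\big((L(B))'\big)=L\big(U(B')\big)=LU(B')$, valid for every $B\subseteq A$, together with $\big(U(B)\big)'=L(B')$; both follow at once from $'$ being an antitone involution on $\mathbf A$. Specialising to $B=\{a\}$ gives $\big(L(a)\big)^{\bot}=LU(a')=L(a')\in X$, so $X'\subseteq X$ and $\varphi(a')=L(a')=\big(\varphi(a)\big)^{\bot}$; thus $\varphi$ is an isomorphism of posets with antitone involution, which establishes $(X,\subseteq,{}^{\bot})\cong\mathbf A$ as well as the involution-closedness of $X$.

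For (ii) everything reduces to showing that $G(\mathbf A)$ is closed under ${}^{\bot}$; once this is known, ${}^{\bot}$ restricts to an antitone involution on the subposet $\big(G(\mathbf A),\subseteq\big)$, and the corollary follows by combining this with Proposition~\ref{niedga}. I would argue as follows. Let $C=L\big(U(A_1),\dots,U(A_n)\big)\in G(\mathbf A)$ with each $A_i$ finite and non-empty; then $C=\bigcap_{i=1}^{n}LU(A_i)$, a finite meet in the complete lattice $\BDM(\mathbf A)$, and since ${}^{\bot}$ is an order-reversing bijection of $\BDM(\mathbf A)$ it carries this meet to the join $C^{\bot}=\bigvee_{i=1}^{n}\big(LU(A_i)\big)^{\bot}$ computed in $\BDM(\mathbf A)$. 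By the De~Morgan identity, $\big(LU(A_i)\big)^{\bot}=LU\big((U(A_i))'\big)=LU\big(L(A_i')\big)=L(A_i')$; and writing $A_i'=\{c_1,\dots,c_k\}$ we get $L(A_i')=\bigcap_{j=1}^{k}L(c_j)=L\big(U(\{c_1\}),\dots,U(\{c_k\})\big)\in G(\mathbf A)$. Since $G(\mathbf A)$ is closed under finite joins taken in $\BDM(\mathbf A)$ --- this is implicit in \cite[Proposition~31]{Niederle}, being exactly what makes the normal form $LU\big(L(B_1),\dots,L(B_n)\big)$ stable --- we conclude $C^{\bot}=\bigvee_{i=1}^{n}L(A_i')\in G(\mathbf A)$. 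Alternatively, one may quote \cite[Theorem~34]{Niederle} directly for the closure of $G(\mathbf A)$ under ${}^{\bot}$.

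I expect the main obstacle to be precisely this last closure claim. The point is that $G(\mathbf A)$ is a priori only a subposet of $\BDM(\mathbf A)$, so one must make sure the De~Morgan computation really does land back inside it; the delicate feature is that ${}^{\bot}$ converts a finite meet of elements of $LU$-type into a finite join of elements of $L$-type, and certifying that these still lie in $G(\mathbf A)$ is where the distributivity of $\mathbf A$ enters, via the normal-form description of $G(\mathbf A)$. Everything else --- the isomorphism $(X,\subseteq,{}^{\bot})\cong\mathbf A$, double density, generation, and the distributive-lattice structure of $G(\mathbf A)$ --- is routine bookkeeping on top of Proposition~\ref{niedga} and Remark~\ref{ddcd}.
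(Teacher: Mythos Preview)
Your proposal is correct and matches the paper's approach exactly: the paper simply says the corollary follows from Remark~\ref{ddcd}, Proposition~\ref{niedga}, and \cite[Theorem~34]{Niederle}, which are precisely the three ingredients you invoke (with \cite[Theorem~34]{Niederle} supplying the closure of $G(\mathbf A)$ under ${}^{\bot}$ that you also verify by hand). Your write-up is just a more detailed unpacking of those citations.
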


\begin{corollary}\label{embeddstinv}
	{\bfseries Embedding theorem for distributive posets with an antitone involution.} The following 
	conditions are equivalent for a poset $\mathbf A$: 
	\begin{enumerate}[{\rm(i)}]
		\item $\mathbf A$ is a distributive poset with an antitone involution; 
		\item  $\mathbf A$ is an involution-closed and doubly dense subset of 
		a distributive lattice with an antitone involution.
	\end{enumerate}
\end{corollary}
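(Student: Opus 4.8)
The plan is to treat the two implications of the equivalence separately; the forward direction is an immediate consequence of material already established, and essentially all of the work lies in the converse.

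For (i)~$\Rightarrow$~(ii) I would argue as follows. Assume $\mathbf A=(A,\leq,{}')$ is a distributive poset with an antitone involution. By Corollary~\ref{dstinv}, $\big(G(\mathbf A),\subseteq,{}^{\bot}\big)$ is a distributive lattice with an antitone involution, and $X=\{L(a)\mid a\in A\}$ is an involution-closed and doubly dense subset of it which, regarded as a poset with involution, is isomorphic to $\mathbf A$. Hence $\mathbf A$ is (up to isomorphism) an involution-closed and doubly dense subset of a distributive lattice with an antitone involution, which is exactly (ii). (One could equally invoke Remark~\ref{ddcd} and use $\BDM(\mathbf A)$ with ${}^{\bot}$, but $G(\mathbf A)$ already suffices.)

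For (ii)~$\Rightarrow$~(i) suppose $\mathbf A$ is an involution-closed and doubly dense subset of a distributive lattice $\mathbf L=(L,\vee,\wedge,{}')$ with an antitone involution, so that $A\subseteq L$ and the order and involution of $\mathbf A$ are the restrictions of those of $\mathbf L$. The involution clause of (i) is immediate: since $A$ is involution-closed, ${}'$ maps $A$ into $A$, and being antitone with $x''=x$ on $L$ it remains so on $A$. It therefore remains to show that $\mathbf A$ is a distributive poset. The key is to translate the operators $L_{\mathbf A},U_{\mathbf A}$ into the lattice operations of $\mathbf L$ via double density: for a finite nonempty $M\subseteq A$ one has $L_{\mathbf A}(M)=L_{\mathbf L}(M)\cap A$ and $U_{\mathbf A}(M)=U_{\mathbf L}(M)\cap A$, and applying double density to the elements $\bigvee_{\mathbf L}M$ and $\bigwedge_{\mathbf L}M$ (which exist, $\mathbf L$ being a lattice and $M$ finite) yields $\bigwedge_{\mathbf L}U_{\mathbf A}(M)=\bigvee_{\mathbf L}M$ and $\bigvee_{\mathbf L}L_{\mathbf A}(M)=\bigwedge_{\mathbf L}M$, hence $L_{\mathbf L}\big(U_{\mathbf A}(M)\big)=L_{\mathbf L}\big(\bigvee_{\mathbf L}M\big)$ and $U_{\mathbf L}\big(L_{\mathbf A}(M)\big)=U_{\mathbf L}\big(\bigwedge_{\mathbf L}M\big)$. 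Using these translations repeatedly, one checks that both sides of the $LU$-identity $L\big(U(x,y),z\big)\approx LU\big(L(x,z),L(y,z)\big)$, computed in $\mathbf A$, collapse to $L_{\mathbf L}\big((x\vee y)\wedge z\big)\cap A$ — the only nontrivial step being the passage $(x\wedge z)\vee(y\wedge z)=(x\vee y)\wedge z$, which is precisely the distributivity of $\mathbf L$. This establishes that $\mathbf A$ is distributive. (Alternatively, the distributivity of $\mathbf A$ is a special case of \cite[Theorem~34]{Niederle} applied to the lattice reduct of $\mathbf L$, combined with the trivial observation on restricting the involution.)

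The only point requiring care — the ``main obstacle'', such as it is — is this last verification: one must select a convenient form of the $LU$-distributive law, track the alternation of $L$'s and $U$'s faithfully, and make sure every appeal to double density is made for an element that genuinely lies in $L$ and that the finiteness restrictions in the definition of a distributive poset are honoured. Everything else follows directly from Corollary~\ref{dstinv} and the elementary behaviour of an antitone involution under restriction.
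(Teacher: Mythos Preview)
Your proposal is correct and matches the paper's approach: the paper presents this corollary with no separate proof, deriving it from Remark~\ref{ddcd}, Proposition~\ref{niedga}, Corollary~\ref{dstinv}, and \cite[Theorem~34]{Niederle}. You have simply unpacked these citations --- using Corollary~\ref{dstinv} for (i)$\Rightarrow$(ii) and, for (ii)$\Rightarrow$(i), spelling out the double-density computation that underlies \cite[Theorem~34]{Niederle} (which you also cite as an alternative) --- so the argument is the same, just more explicit.
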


But we can prove more. 

\begin{proposition}\label{dstkleene}
	Let  $\mathbf A=(A,\leq,\, {}')$ be a Kleene poset. 
	Then $(G\big(\mathbf A),\subseteq, {}^{\bot}\big)$  
	is a  Kleene lattice and $X= \{L(a) \mid a \in A\}$ is  involution-closed  and doubly dense in 
	$G(\mathbf A)$, 
	generates $G(\mathbf A)$ and $(X, \subseteq, {}^{\bot})$ is isomorphic to $\mathbf A$. 
\end{proposition}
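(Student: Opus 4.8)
The plan is to lean on Corollary~\ref{dstinv}, which already supplies most of the statement: it gives that $\big(G(\mathbf A),\subseteq,{}^{\bot}\big)$ is a distributive lattice with an antitone involution, and that $X=\{L(a)\mid a\in A\}$ is involution-closed and doubly dense in $G(\mathbf A)$, generates $G(\mathbf A)$, and satisfies $(X,\subseteq,{}^{\bot})\cong\mathbf A$. Hence the only new thing to prove is that $G(\mathbf A)$ satisfies the normality condition~(\ref{equ1}); once this is done, $G(\mathbf A)$ is a distributive lattice with an antitone involution obeying~(\ref{equ1}), i.e.\ a Kleene lattice, and the proposition follows. By the Lemma identifying the normality condition with the Zhu condition, it is enough, and more convenient, to verify the Zhu condition~(\ref{equzhu}) in $G(\mathbf A)$.

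So I would take $P,Q\in G(\mathbf A)$ with $P\subseteq P^{\bot}$ and $Q^{\bot}\subseteq Q$ and prove $P\subseteq Q$. Since $X$ is doubly dense in $G(\mathbf A)$ we have $P=\bigvee_{G(\mathbf A)}\{x\in X\mid x\subseteq P\}$ and $Q=\bigwedge_{G(\mathbf A)}\{y\in X\mid Q\subseteq y\}$, so it suffices to show $x\subseteq y$ whenever $x,y\in X$, $x\subseteq P$ and $Q\subseteq y$ (then $Q$ is an upper bound of $\{x\in X\mid x\subseteq P\}$, whose join is $P$, giving $P\subseteq Q$). For such an $x$, antitonicity of ${}^{\bot}$ together with $x\subseteq P$ yields $x\subseteq P\subseteq P^{\bot}\subseteq x^{\bot}$, hence $x\subseteq x^{\bot}$; for such a $y$ it yields $y^{\bot}\subseteq Q^{\bot}\subseteq Q\subseteq y$, hence $y^{\bot}\subseteq y$. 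Since $X$ is involution-closed and $(X,\subseteq,{}^{\bot})\cong\mathbf A$ is a Kleene poset, $X$ satisfies the Zhu condition, so $x\subseteq x^{\bot}$ and $y^{\bot}\subseteq y$ force $x\subseteq y$, as required.

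The one substantive point is the reduction in the first paragraph plus the observation just used, namely that the Zhu condition passes from any involution-closed, doubly dense subposet to the ambient distributive lattice with antitone involution; the computation itself is two applications of antitonicity and one application of the Zhu condition in $\mathbf A$, so I do not anticipate a real obstacle. A concrete variant not mentioning double density is also available: unwinding $C^{\bot}=L(C')$ and using that every element of $G(\mathbf A)$ is a Dedekind--MacNeille cut ($Q=L(U(Q))$), one checks directly that $p\in P$ forces $p\leq p'$, that every $a\in U(Q)$ satisfies $a'\in L(Q')=Q^{\bot}\subseteq Q$ and hence $a'\leq a$, and then the Zhu condition in $\mathbf A$ gives $p\leq a$ for all $a\in U(Q)$, i.e.\ $p\in Q$. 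The main thing to be careful about is to argue via the Zhu condition rather than directly via $P\wedge P^{\bot}\leq Q\vee Q^{\bot}$, so as not to need an explicit description of joins and meets in $G(\mathbf A)$.
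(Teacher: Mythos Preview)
Your proof is correct but takes a genuinely different route from the paper's.

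The paper verifies the normality condition $C\wedge C^{\bot}\leq D\vee D^{\bot}$ directly, by explicit lattice computations in $G(\mathbf A)$: it first treats the case $C=LU(E)$, $D=LU(F)$ for finite $E,F\subseteq A$, expanding the meets and joins of principal ideals via distributivity and then applying the normality condition $L(e,e')\leq LU(h,h')$ from $\mathbf A$ termwise; afterwards it handles arbitrary $C=\bigwedge_i C_i$, $D=\bigwedge_j D_j$ by another distributivity computation reducing to the first case. Your approach instead passes through the Zhu condition and exploits double density (or the fact that elements of $G(\mathbf A)$ are DM-cuts) to reduce to principal ideals, where the Zhu condition holds because $(X,\subseteq,{}^{\bot})\cong\mathbf A$. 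What you gain is a cleaner, essentially computation-free argument that in fact proves the more general transfer principle: normality lifts from any involution-closed doubly dense subset to the ambient lattice with antitone involution, with no use of distributivity in that step. What the paper's approach buys is self-containment---it does not invoke the equivalence with the Zhu condition---and it makes the role of the specific generating form of elements of $G(\mathbf A)$ (finite meets of $LU$ of finite sets) visible, though at the cost of two rounds of distributive manipulation.
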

\begin{proof} It is enough to check that for all $C, D\in G(\mathbf A)$, we have 
	$$
	C\cap {C}^{\bot} \subseteq L\big(U(D\cup D^{\bot})\big). 
	$$
	Assume first that $C=LU(E)$ and $D=LU(F)$ where $E, F$ are non-empty finite subsets of $A$.
	Then $C=\bigvee \{L(e)\mid e\in E\}$ and $D=\bigvee \{L(f)\mid f\in F\}$. We compute:
	
	\begin{align*}
		&\big(\bigvee \{L(e)\mid e\in E\}\big)\wedge %
		\big(\bigwedge \{L(g)^{\bot}\mid g\in E\}\big)=
		\bigvee_{e\in E} \Big(L(e)\wedge  %
		\big(\bigwedge \{L(g)^{\bot}\mid g\in E\}\big)\Big)\\
		&\leq \bigvee_{e\in E} \big(L(e)\wedge L(e)^{\bot}\big)=%
		\bigvee_{e\in E} \big(L(e)\wedge L(e')\big)%
		=\bigvee_{e\in E} L(e,e')\leq \bigwedge_{h\in F} LU(h,h')\\
		&=\bigwedge_{h\in F} \big(L(h)\vee L(h')\big)\leq\bigwedge_{h\in F} %	
		\Big(\big(\bigvee \{L(f)\mid f\in F\}\big)\vee L(h')\Big)\\[0.2cm]
		&=\big(\bigvee \{L(f)\mid f\in F\}\big)\wedge %
		\big(\bigwedge \{L(h)^{\bot}\mid h\in F\}\big).
	\end{align*}
	
	Now, assume that $C=\bigwedge_{i=1}^{n} C_i$, $D=\bigwedge_{j=1}^{m} D_i$ where 
	$C_i=LU(E_i)$, $D_j=LU(F_j)$, $E_i$ and  $F_j$ are non-empty finite subsets 
	of $A$, $1\leq i\leq n$ 
	and $1\leq j\leq m$. We compute:
	$$
	\begin{array}{@{}r c l}
		(\bigwedge_{i=1}^{n} C_i)\wedge 	(\bigvee_{k=1}^{n} C_k^{\bot})&=&
		\bigvee_{k=1}^{n} \big(C_k^{\bot} \wedge (\bigwedge_{i=1}^{n} C_i)\big)\leq % 
		\bigvee_{k=1}^{n} (C_k^{\bot} \wedge C_k)\leq %
		\bigwedge_{l=1}^{m} (D_l^{\bot} \vee D_l)\\[0.2cm]
&\leq&\bigwedge_{l=1}^{m} \big((\bigvee_{j=1}^{m}  D_j^{\bot}) \vee D_l\big)=%
		(\bigvee_{j=1}^{m}  D_j^{\bot})\vee (\bigwedge_{l=1}^{m} D_l).
	\end{array}
	$$
\end{proof}

\begin{theorem}\label{embeddstkleene}
	{\bfseries Embedding theorem for Kleene posets.} The following 
	conditions are equi\-va\-lent for a poset $\mathbf A$: 
	\begin{enumerate}[{\rm(i)}]
		\item $\mathbf A$ is a Kleene poset; 
		\item  $\mathbf A$ is an involution-closed and doubly dense subset of 
		a Kleene lattice.
	\end{enumerate}
\end{theorem}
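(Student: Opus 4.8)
The plan is to derive this as a direct corollary of the machinery already assembled. The implication (i) $\Rightarrow$ (ii) is the substantive one, and it follows immediately from Proposition~\ref{dstkleene}: if $\mathbf A=(A,\leq,{}')$ is a Kleene poset, then $\big(G(\mathbf A),\subseteq,{}^{\bot}\big)$ is a Kleene lattice, and the set $X=\{L(a)\mid a\in A\}$ is an involution-closed and doubly dense subset of it that is isomorphic (as a poset with antitone involution) to $\mathbf A$. Thus $\mathbf A$ can be identified with $X$, an involution-closed doubly dense subset of the Kleene lattice $G(\mathbf A)$, which is exactly (ii). So this direction is essentially a restatement of Proposition~\ref{dstkleene}.

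For the converse (ii) $\Rightarrow$ (i), suppose $\mathbf A$ is (isomorphic to) an involution-closed and doubly dense subset $X$ of a Kleene lattice $\mathbf L=(L,\vee,\wedge,{}')$. By Remark~\ref{ddcd} and Corollary~\ref{embeddstinv}, $\mathbf A$ is at least a distributive poset with an antitone involution, so it remains only to verify the normality condition~\eqref{equ1}, i.e.\ $L_{\mathbf A}(x,x')\leq U_{\mathbf A}(y,y')$ for all $x,y\in A$. Equivalently, by the Lemma relating normality to the Zhu condition, it suffices to check that $x\leq x'$ and $y'\leq y$ imply $x\leq y$ in $\mathbf A$. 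But since $X$ carries the induced order and induced involution from $\mathbf L$, these inequalities hold in $\mathbf L$ as well; as $\mathbf L$ is a Kleene lattice it satisfies normality, hence the Zhu condition, so $x\leq y$ holds in $\mathbf L$, and therefore in $\mathbf A$ because the order on $X$ is induced. This gives (i).

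The only point requiring a little care is the phrase ``doubly dense subset of a Kleene lattice'' in (ii): one must be sure that the order and involution on $\mathbf A$ are genuinely the restrictions of those on the ambient lattice, so that properties expressed by order-and-involution inequalities (like the Zhu condition) transfer downward without change. This is guaranteed by the convention recorded in Remark~\ref{ddcd} that an involution-closed doubly dense subset is viewed as a poset with \emph{induced} order and involution. Given that, the argument is completely routine; I do not expect any genuine obstacle, since all the real work has been done in Proposition~\ref{dstkleene} and in the earlier equivalence of the normality and Zhu conditions. If one wanted a self-contained statement of the embedding, one would additionally remark that the inclusion $X\hookrightarrow G(\mathbf A)$ (or into any ambient Kleene lattice) preserves all existing joins and meets witnessing double density, but for the bare equivalence of (i) and (ii) this is not needed.
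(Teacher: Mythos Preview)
Your proposal is correct and follows essentially the same route as the paper: (i)$\Rightarrow$(ii) is deferred to Proposition~\ref{dstkleene}, and for (ii)$\Rightarrow$(i) both you and the paper invoke Corollary~\ref{embeddstinv} to get distributivity with an antitone involution and then use the induced order and involution to pull normality down from the ambient Kleene lattice. The only cosmetic difference is that the paper checks the normality condition~\eqref{equ1} directly (take $a\in L(x,x')$, $b\in U(y,y')$, then $a\leq x\wedge x'\leq y\vee y'\leq b$ in the ambient lattice), whereas you pass through the equivalent Zhu condition; neither buys anything over the other.
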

\begin{proof} (i)$\implik$ (ii) has been proved in Proposition \ref{dstkleene}. 
	
	(ii)$\implik$ (i): From Corollary \ref{embeddstinv}, we know that $\mathbf A$ is a distributive 
	poset with an antitone involution ${}'$. But the involution reflects the Kleene condition.	
	Namely, let $x, y\in A$. Assume that $a\in L(x,x')$ and $b\in U(y,y')$. Then 
	$a\leq x\wedge x'\leq y\vee y'\leq b$ in the Kleene lattice. Hence $a\leq b$  in $\mathbf A$, i.e., 
	 $\mathbf A$ is a Kleene poset. 
\end{proof}

\section*{Data availability statement}

Data sharing is not applicable to this article as no datasets were generated or analysed during the current study.

Authors' addresses:

Ivan Chajda \\
Palack\'y University Olomouc \\
Faculty of Science \\
Department of Algebra and Geometry \\
17.\ listopadu 12 \\
771 46 Olomouc \\
Czech Republic \\
ivan.chajda@upol.cz

Helmut L\"anger \\
TU Wien \\
Faculty of Mathematics and Geoinformation \\
Institute of Discrete Mathematics and Geometry \\
Wiedner Hauptstra\ss e 8-10 \\
1040 Vienna \\
Austria, and \\
Palack\'y University Olomouc \\
Faculty of Science \\
Department of Algebra and Geometry \\
17.\ listopadu 12 \\
771 46 Olomouc \\
Czech Republic \\
helmut.laenger@tuwien.ac.at

Jan Paseka \\
Masaryk University Brno \\
Faculty of Science \\
Department of Mathematics and Statistics \\
Kotl\'a\v rsk\'a 2 \\
611 37 Brno \\
Czech Republic \\
paseka@math.muni.cz
\end{document}